 \newtheorem{thm}{Theorem}[section]
 \newtheorem{cor}[thm]{Corollary}
 \newtheorem{lem}[thm]{Lemma}
 \newtheorem{prop}[thm]{Proposition}
 \theoremstyle{definition}
 \theoremstyle{remark}
 \newtheorem{rem}[thm]{Remark}
 \newtheorem{exm}[thm]{Example}
\newcommand{\A}{\mathcal{A}}      
\newcommand{\U}{\mathcal{U}}      
\newcommand{\C}{\mathfrak{S}}
\begin{document}

\nocite{*}

\title{Derivations on semidirect products of Banach algebras}

\author{ H. Farhadi  and  H. Ghahramani }
\thanks{{\scriptsize
\hskip -0.4 true cm \emph{MSC(2010)}:  16E40; 46H40; 46H25.
\newline \emph{Keywords}: semidirect product, Banach algebra, derivation, automatic continuity, first cohomology group.\\}}

\address{Department of
Mathematics, University of Kurdistan, P. O. Box 416, Sanandaj,
Iran.}
\email{h.farhadi@sci.uok.ac.ir}
\email{h.ghahramani@uok.ac.ir; hoger.ghahramani@yahoo.com}

\maketitle
\begin{abstract}
Let $\A$ and $\U$ be Banach algebras such that $\U$ is also a Banach $\A$-bimodule with compatible algebra operations, module actions and norm. By defining an approprite action, we turn $l^1$-direct product $\A\times\U$ into a Banach algebra such that $\A$ is closed subalgebra and $\U$ is a closed ideal of it. This algebra, is in fact semidirect product of $\A$ and $\U$ which we denote it by $\A\ltimes\U$ and every semidirect products of Banach algebras can be represented as this form. In this paper we consider the Banach algebra $\A\ltimes\U$ as mentioned and study the derivations on it. In fact we consider the automatic continuity of the derivations on $\A\ltimes\U$ and obtain some results in this context and study its relation with the automatic continuity of the derivations on $\A$ and $\U$. Also we calculate the first cohomology group of $\A\ltimes\U$ in some different cases and establish relations among the first cohomology group of $\A\ltimes\U$ and those of $\A$ and $\U$. As applications of these contents, we present various results about the automatic continuity of derivations and the first cohomology group of direct products of Banach algebras, module extension Banach algebras and $\theta$-Lau products of Banach algebras.
\end{abstract}
\section{Introduction}
Let $\A$ be a Banach algebra (over the complex field $\mathbb{C}$), and $\U$ be a Banach $\A$-bimodule. A linear map $\delta:\A\rightarrow\U$ is called a \textit{derivation} if $\delta (ab)=a\delta(b)+\delta (a)b$ holds for all $a,b\in\A$. For any $x\in\U$, the map $id_x:\A\rightarrow\U$ given by $id_x(a)=ax-xa$ is a continuous derivation called \textit{inner derivation}. The set of all continuous derivations from $\A$ into $\U$ is denoted by $Z^{1}(\A,\U)$ while $N^{1}(\A,\U)$ denotes the set of all inner derivations from $\A$ into $\U$. $Z^{1}(\A,\U)$ is a linear subspace $\mathbb{B}(\A,\U)$ (the space of all continuous linear maps from $\A$ into $\U$) and $N^{1}(\A,\U)$ is a linear subspace of $Z^{1}(\A,\U)$. We denote by $H^{1}(\A,\U)$, the quotient space $\frac{Z^{1}(\A,\U)}{N^{1}(\A,\U)}$ which is called \textit{the first cohomology group of $\A$ with coefficients in $\U$}. We call the the first cohomology group of $\A$ with coefficients in $\A$ briefly by first cohomology group of $\A$. Derivations and cohomology group are important subjects in the study of Banach algebras. Among the most important problems related to them are these questions; Under what conditions a derivation $\delta:\A\rightarrow\U$ is continuous? Under what conditions one has $H^{1}(\A,\U)=(0)$? (i.e., every continuous derivation from $\A$ into $\U$ is inner). Also the calculation of $H^{1}(\A,\U)$, up to isomorphism of linear spaces, is a considerable problem.
\par 
The problem of continuity of derivations is related to the subject of automatic continuity which is an important subject in mathematical analysis. Many studies have been performed in this regard and it has a long history. We may refer to \cite{da} for more information which is a detailed source in this subject. Here we only review the most important obtained results concerning the automatic continuity of derivations. Johnson and Sinclair in \cite{john1} have shown that every derivation on a semisimple Banach algebra is continuous. Ringrose in \cite{rin} showed that every derivation from a $C^{*}$-algebra $\A$ into Banach $\A$-bimodule $\U$ is continuous. In \cite{chr}, Christian proved that every derivation from a nest algebra on Hilbert space $\mathcal{H}$ into $\mathbb{B}(\mathcal{H})$ is continuous. Additionally, some results on automatic continuity of derivations on prime Banach algebras have been established by Villena in \cite{vi1} and \cite{vi2} .
\par 
The study of the first cohomology group of Banach algebras is also a considerable topic which may be used to study the structure of Banach algebras. Johnson in \cite{john2}, using the first cohomology group, has defined amenable Banach Algebras and then various types of amenability defined by using the first cohomology group. We may refer the reader to \cite{run} for more information. Also among the interesting problems in the theory of derivations is either characterizing algebras on which every continuous derivation is inner, that is, the first  cohomology group is trivial or characterizing the first cohomology group of Banach algebras up to vector spaces isomorphism. Sakai in \cite{sa} showed that every continuous derivation on a $W^{*}$-algebra is inner. Kadison \cite{ka} proved that every derivation of a $C^{*}$-algebra on a Hilbert space $\mathcal{H}$ is spatial (i.e., it is of the form $a\mapsto ta-at$ for $t\in \mathbb{B}(\mathcal{H})$) and in particular, every derivation on a von Neumann algebra is inner. Some results have also been obtained in the case of  non-self-adjoint operator algebras. Christian \cite{chr} showed that every continuous derivation on a nest algebra on $\mathcal{H}$ to itself and to $\mathbb{B}(\mathcal{H})$ is inner and then this result generalized to some other forms among which we may refer to \cite{li} and the references therein. Gilfeather and Smith have calculated the first cohomology group of some operator algebras called joins(\cite{gil1}, \cite{gil2}). In \cite{do}, the cohomology group of operator algebras called seminest algebras has been calculated. All of these operator algebras and nest algebras have a structure like triangluar Banach algebras, so motivated by these studies, Forrest and Marcoux in \cite{for} verified the first cohomology group of triangular Banach algebras. The first cohomology group of triangular Banach algebras is not necessarily zero and in \cite{for}, in fact it has been calculated under some special conditions and using the results, various examples of Banach algebras with non-trivial cohomology have been given and then the first cohomology group of those examples has been computed. Also in \cite{for}, some results concerning the automatic continuity of derivations on triangluar Banach algebras are presented. A generalization of triangluar Banach algebras is module extensions of Banach algebras which Zhang \cite{zh} has studied the weak amenability of them and then Medghalchi and Pourmahmood in \cite{med} computed the first cohomology group of module extensions of Banach algebras and using those results, they gave various examples of Banach algebras  with non-trivial cohomology group and in fact they calculated the first cohomology group of the given examples.
\par 
Another class of Banach algebras which considered during the last thirty years, is the class of algebras obtained by a special product called $\theta$-Lau product. This product is firstly introduced by Lau \cite{lau} for a special class of Banach algebras which are pre-dual of von Neumann algebras where the dual unit element (the unit element of the dual) is a multiplicative linear functional. Afterwards, various studies have been performed to it. For instance, Monfared in \cite{mn} has verified the structure of this special product and in \cite{gha} the amenability of Banach algebras equipped with the same product has been studied. For more information about this product the reader may refer to \cite{gha}, \cite{mn} and references therein.  
\par 
Let $\mathcal{B}$ be a Banach algebra such that $\mathcal{B} =\A\oplus\U$ (as direct sum of Banach spaces) where $\A$ is a closed subalgebra of $\mathcal{B}$ and $\U$ is a closed ideal in $\mathcal{B}$. In this case, we say that $\mathcal{B}$ is \textit{semidirect product} of $\A$ and $\U$ and write $\mathcal{B}=\A\ltimes \U$. Semidirect product Banach algebras appear in the study of many classes of Banach algebras. For instance, in the strong Wedderburn decomposition of a Banach algebra $\mathcal{B}$, it is assumed that $\mathcal{B}=\A\ltimes Rad\mathcal{B}$ where $Rad\mathcal{B}$ is the Jacobson radical of $\mathcal{B}$ and $\A $ is a closed subalgebra of $\mathcal{B}$ with $\A\cong \frac{\mathcal{B}}{Rad\mathcal{B}}$ or for example, in \cite{da2} using the structure of semidirect product, the authors have studied the amenability of measure algebras since every measure algebra has a decomposition to semidirect product of Banach algebras. In \cite{tho2}, Thomas has verified the necessary conditions of decomposition of a commutative Banach algebra to semidirect product of a closed subalgebra and a principal ideal. We also may refer to \cite{ba}, \cite{ber} and \cite{wh} where semidirect product Banach algebras are studied from different points of view. Equivalently, one may discuss semidirect product Banach algebras as follows; Let $\A$ and $\U$ be Banach algebras such that $\U$ is a Banach $\A$-bimodule with compatible actions and appropriate norm. Consider the multiplication on $\A\times\U$ given by 
\[(a,x)(b,y)=(ab,ay+xb+xy)\quad\quad  ((a,b)\in \A\times\U).\]
It can be shown that with this multiplication and $\l^{1}$-norm, $\A\times\U$ is a Banach algebra where $\A$ is a closed subalgebra of this Banach algebra and $\U$ is a closed ideal of it. So in fact this Banach algebra is equivalent to $\A\ltimes\U$. By considering different module actions or algebra multiplications, it can be seen that $\A\ltimes\U$ is a generalization of direct products of Banach algebras, tivial extension Banach algebras, triangular Banach algebras or $\theta$-Lau product Banach algebras. In this paper we consider semidirect product Banach algebras as mentioned above and study the derivations on this special product of Banach algebras. In fact we establish various results concerning the automatic continuity of derivations on $\A\ltimes\U$ and the first cohomology group of it and we present these results in special cases of $\A\ltimes\U$ and obtain various examples of Banach algebras with automatically continuous derivations and trivial first cohomology group or compute their first cohomology group.
\par
This paper is organized as follows. In section 2, we investigate the definition of semidirect product Banach algebras and we show that this product is a generalization of various types of Banach algebras. In section 3, the structure of derivations on semidirect product Banach algebras will be discussed and using that we obtain several results about the automatic continuity of derivations on these Banach algebras and also verify the decomposition of derivations into the sum of a continuous derivation and another derivation. In section 4, we consider the fist cohomology group of semidirect product Banach algebras and compute it under some different conditions and establish various results in this context. In section 5, we apply the obtained results in sections 3,4 to some special cases of semidirect products of Banach algebras. In fact we investigate the automatic continuity of the derivations and the first cohomology group of direct products of Banach algebras, module extension Banach algebras and $\theta$-Lau products of Banach algebras and  establish some various results about the derivations on these Banach algebras. 
\par 
At the end of this section we introduce some used notations and expressions in the paper.
\par 
If $\mathcal{X}$ and $\mathcal{Y}$ are Banach spaces, for a linear transform $T:\mathcal{X}\rightarrow \mathcal{Y}$, define the separating space $\mathfrak{S}(T)$ as 
\[\mathfrak{S}(T):=\{y\in \mathcal{Y}\, \mid \, \text{there is}\, \, \{x_n\}\subseteq \mathcal{X}\,\, \text{with}\, \, x_n\rightarrow 0 , \, T(x_n)\rightarrow y\} .\]
By closed graph theorem, $T$ is continuous if and only if $\mathfrak{S}(T)=(0)$.
\par 
Let $\A$ be a Banach algebra and $\U$ be a Banach $\A$-bimodule. By $Z(\A)$, we mean the center of $\A$. Consider the set $ann_{\A}\U$ as
\[ann_{\A}\U:=\{a\in\A\, \mid \, a\U=\U a=(0)\}.\]
If $\mathcal{N}$ is an $\A$-submodule of $\U$, we put 
\[(\mathcal{N}:\U)_{\A}:=\{a\in\A \, \mid \, a\U\subseteq \mathcal{N}, \, \U a\subseteq \mathcal{N}\}. \] 
It is clear that if $\mathcal{N}=(0)$, then $((0):\U)_{\A}=ann_{\A}\U$. Let $\U$ and $\mathcal{V}$ be Banach $\A$-bimodules. A linear map $\phi :\U\rightarrow \mathcal{V}$ is said to be a \textit{left $\A$-module homomorphism} if $\phi (ax)=a\phi (x)$ whenever $a\in \A$ and $x\in\U$ and it is a \textit{right $\A$-module homomorphism} if $\phi (xa)=\phi (x) a\quad (a\in \A,x\in\U)$. The linear map $\phi$ is called \textit{$\A$-module homomorphism}, if $ \phi$ is both of left and right $\A$-module homomorphism. The set of all continuous $\A$-module homomorphisms from $\U$ into $\mathcal{V}$ is denoted by $Hom_{\A}(\U,\mathcal{V})$. Note that if the spaces are the same, we just write $Z^{1}(\A), N^{1}(\A), H^{1}(\A), Hom_{\A}(\U)$
%%****************************************************************************************
%%****************************************************************************************
\section{Semidirect products of Banach algebras}
In this section we introduce the notion of semidirect produts of Banach algebras and give some properties of this concept.\\
Let $\A$ and $\U$ be Banach algebras such that $\U$ is a Banach $\A$-bimodule with
\[\parallel ax \parallel \leq \parallel a \parallel \parallel x \parallel, \quad \parallel xa \parallel \leq \parallel x \parallel \parallel a \parallel \quad\quad (a\in \A, x\in \U), \] 
and compatible actions, that is 
\[ (a.x)y=a.(xy),\,\,\, (xy).a=x(y.a), \,\,\, (x.a)y=x(a.y)  \quad\quad (a \in \A, \, x,y\in \U). \]
If we equip the set $\A \times \U$ with the usual $\mathbb{C}$-module structure, then the multiplication 
\[ (a,x)(b,y)=(ab, a.y+x.b+xy) \]
turns $\A \times \U$ into an associative algebra. 
\par 
In continue we also denote the module actions by $ax$ and $xa$.
\par 
The \textit{semidirect product} of Banach algebras $\A$ and $\U$, denoted by $\A \ltimes \U$, is defined as the space $\A \times \U$ with the above algebra multiplication and with the norm 
\[ \parallel (a,x) \parallel = \parallel a \parallel + \parallel x \parallel . \]
The semidirect product $\A \ltimes \U$ is a Banach algebra.
\begin{rem}\label {1}
In $\A \ltimes \U$ we identify $\A \times \lbrace 0 \rbrace$ with $\A$, and $\lbrace 0 \rbrace \times \U $ with $\U$. Then $\A$ is a closed subalgebra while $\U$ is a closed ideal of $\A \ltimes \U$, and
\[ \A \ltimes \U / \U \cong \A \quad (isometric \, \, isomorphism). \]
Indeed $\A \ltimes \U$ is equal to direct sum of $\A$ and $\U$ as Banach spaces.
\par
Conversely, let $\mathcal{B}$ be a Banach algebra which has the form $\mathcal{B}=\A \oplus \U$ as Banach spaces direct sum, where $\A$ is a closed subalgebra of $\mathcal{B}$ and $\U$ is a closed ideal in $\mathcal{B}$. In this case, the product of two elements $a+x$ and $b+y$ of  $\mathcal{B}=\A \oplus \U$ is given by 
\[(a+x)(b+y)=ab+(ay+xb+xy), \] where $ab\in \A$ and $ay+xb+xy\in \U$. Also the norm on $\mathcal{B}$ is equivalent to the one given by 
\[ \parallel a+x \parallel =\parallel a \parallel + \parallel  x \parallel \quad\quad (a\in \A, x \in \U)).\]
On the other hand $\A$ and $\U$ are Banach algebras such that  $\U$ is a Banach $\A$-bimodule with compatible actions and
\[\parallel ax \parallel \leq \parallel a \parallel \parallel x \parallel, \quad \parallel xa \parallel \leq \parallel x \parallel \parallel a \parallel \quad\quad (a\in \A, x\in \U). \] 
By above arguments we have 
\[ \mathcal{B}\cong \A \ltimes \U , \]
as isomorphism of Banach algebras.
\end{rem}
The following examples provides various
types of semidirect product af Banach algebras.
 \begin{exm}
Let $\A$ be a Banach algebra. Then the unitization of $\A$ denoted by $\A^{\#}$ is in fact the semidirect product  $\mathbb{C}\ltimes \A$.
\end{exm}
\begin{exm}\label{dp}
Suppose that the action  $\A$ on  $\U$ is trivial, that is, $ \A\U=\U\A=(0)$, then we obtain the usual $l^{1}$-direct product of Banach algebras $\A$ and $\U$. In this case, $\A \ltimes \U= \A \times \U$.
\end{exm}
\begin{exm}\label{me}
Let the algebra multiplication on $\U$ be the trivial action, that is, $\U^2=(0)$, then $\A\ltimes \U$ is same as the module extension (or trivial extension) of $\A$ by $\U$ which we denote by $T(\A,\U)$.
\end{exm}
\begin{exm}\label{tri}
Let $\A$ and $\mathcal{B}$ be Banach algebras and $\mathcal{M}$ be a Banach $(\A,\mathcal{B})$-bimodule. The triangular Banach algebra introduced in \cite{for} is 
$Tri(\A, \mathcal{M}, \mathcal{B}):=\begin{pmatrix}
 \A & \mathcal{M}  \\
   0 &\mathcal{B} 
\end{pmatrix}$ with the usual matrix operations and $l^1$-norm. If we trun $\mathcal{M}$ into a  Banach $\A \times\mathcal{B}$-bimodule with the actions 
$$(a,b)m=am\quad ,\quad m(a,b)=mb\quad\quad ((a,b)\in \A\times\mathcal{B}, m\in \mathcal{M} )$$
( $\A \times \mathcal{B}$  is considered with $l^1$-norm), then $Tri(\A, \mathcal{M}, \mathcal{B})\cong T(\A\times\mathcal{B},\mathcal{M})$ as isomorphism of Banach algebras. Therefore triangular Banach algebra is an example of semidirect products of Banach algebras.
\end{exm}
\begin{rem}
Let $\A$ and $\U$ be Banach algebras such that $\U$ is a Banach $\A$-bimodule with the compatible actions and norm, and  let $\overline{\A\U}=\U$ or $\overline{\U\A}=\U$. Let the Banach algebra $\A$ be the direct sum of its closed ideals $I_1$ and $I_2$, that is, $\A=I_1 \oplus I_2$. If $I_2\U=\U I_1=(0)$, then $\U^2=(0)$. Because if $a\in I_1$, $b\in I_2$ and $x,y\in \U$ are arbitrary, then by the associativity we have $$x[(a+b)y]=[x(a+b)]y.$$
So $$x(ay)=(xb)y.$$
If $\overline{\A\U}=\U$ and we put $b=0$ in the above equation, then $xy=0$ and analogously, if $\overline{\U\A}=\U$, letting $a=0$ gives  $xy=0$.
\par 
By the above arguments, hypothesis and according to Example \ref{tri}, in this case we have 
\begin{equation*}
\A\ltimes \U=T(\A,\U)\cong\begin{pmatrix}
 I_1 & \U \\
   0 &I_2
\end{pmatrix}
\end{equation*}
 as isomorphism of Banach algebras.
\end{rem}
\begin{exm}\label{la}
Let $\A$ and $\U$ be Banach algebras, $\theta\in \Delta(\A)$ where $\Delta(\A)$ is the set of all non-zero characters of $\A$. With the following module actions, $\U$ becomes a Banach  $\A$-bimodule:
$$ax=xa=\theta(a)x\quad\quad (a\in \A ,x\in \U).$$
The norm and the actions on $\U$ are compatible and one can consider $\A\ltimes \U$ with the multiplication
$$(a,x)(b,y)=(ab,\theta(a)y+\theta(b)x+xy).$$
In this case $\A\ltimes \U$ is the $\theta$-Lau product which is introduced in \cite{lau}. 
\end{exm}
\begin{exm}
Let $G$ be a locally compact group. Then $M(G)=l^{1}(G)\ltimes M_{c}(G)$ where $M_{c}(G)$ is a subspace of $M(G)$ containing all continuous measures such that for $\mu\in M(G)$, $\mu\in M_c(G)$ if and only if $\mu(\{s\})=0$ $(s\in G)$. We denote the subspace of discrete measures by $M_d({G})$ which is isomorphic to $l^{1}(G)$ and 
\begin{equation*}M_{d}(G)=\{\mu =\sum{\alpha_{s}\delta_{s}}:\Vert\mu\Vert =\sum_{s\in G}{\vert \alpha_{s}\vert }<\infty \}.
\end{equation*}
Indeed $l^{1}(G)$ is a closed subspace of $M(G)$ and $M_c(G)$ is a closed ideal of $M(G)$. If $G$ is discrete, then $M(G)=l^{1}(G)$ and $M_{c}(G)=\{0\}$. But if $G$ is not discrete, then $M_{c}(G)\neq\{0\}$ 
\end{exm}
It is possible for an algebra $\mathcal{B}$ with a closed ideal $\mathcal{I}$, not to exist some closed subalgebra $\A$ of $\mathcal{B}$ such that $\mathcal{B}=\A\ltimes \mathcal{I}$. In the following we give an example of such a Banach algebra.
\begin{exm}
Let $\mathcal{B}:=C([0,1])$ be the Banach algebra of continuous complex-valued functions on $[0,1]$ and let $\mathcal{I}:=\{f\in \mathcal{B} :f(0)=f(1)=0\}$. $\mathcal{I}$ is a closed ideal of $\mathcal{B}$. If $\A $ is a closed subalgebra of $\mathcal{B}$ satisfying  $\mathcal{B}=\A \oplus \mathcal{I}$ as Banach spaces direct sum, then for $f\in \mathcal{I}$ and $g\in \A$ with $f(x)+g(x)=x$ for $x\in [0,1]$, we have $g(0)=0$, $g(1)=1$ and $g-g^2\in \A\cap \mathcal{I}$. But yet $g-g^2\neq 0$.
\end{exm}
Note that $\A\ltimes\U$ is commutative if and only if both $\A$ and $\U$ are commutative Banach algebras and $\U$ is a commutative $\A$-bimodule.
\par 
In the rest of this section we introduce some special maps which are used in next sections.
\par 
For $a\in\A$ define the map $r_{a}:\U\rightarrow\U$ by $r_{a}(x)=xa-ax$. Some properties of this map are given in the following remark.
\begin{rem}\label{inn1}
For $a\in\A$, consider the map $r_{a}:\U\rightarrow\U$.
\begin{enumerate}
\item[(i)]
$r_a$ is a derivation on $\U$.
\item[(ii)]
For every $b\in\A$ and $x\in\U$,
\[r_{a}(bx)=br_{a}(x)+id_{a}(b)x \quad \text{and} \quad r_{a}(xb)=r_{a}(x)b+x id_{a}(b).\]
\item[(iii)]
For $a\in \A$, if $id_{a}=0$, then $r_a$ is an $\A$-bimodule homomorphism. Also if $ann_{\A}\U =(0)$ and $r_a$ is an $\A$-bimodule homomorphism, then $id_{a}=0$.
\end{enumerate}
\end{rem}
Also inner derivations on $\U$ have significant properties considered in determining the first cohomology group of $\A\ltimes\U$ which are given in the next remark.
\par 
Note that both of inner derivations from $\A$ to $\U$ and inner derivations from $\U$ to $\U$  are denoted by $id_{x}$. So in order to avoid confusion, we denote by $id_{\A , x}$, the inner derivations  from $\A$ to $\U$ while $id_{\U , x}$ denotes the inner derivations from $\U$ to $\U$.
\begin{rem}\label{inn2}
For $x_0\in\U$, consider the inner derivation $id_{\U , x_0}:\U\rightarrow\U$.
\begin{enumerate}
\item[(i)]
For every $a\in\A$ and $x\in\U$, 
\[id_{\U ,x_0}(ax)=a \, id _{\U ,x_0}(x)+id _{\A , x_0}(a)x \quad \text{and} \quad id_{\U ,x_0}(xa)=id _{\U ,x_0}(x)a+x \, id _{\A, x_0}(a).\]
\item[(ii)]
For $x_{0}\in \U$, if $id_{\A, x_0}=0$, then $id_{\U , x_0}$ is an $\A$-bimodule homomorphism. If $ann_{\U}\U =(0)$ and $id_{\U , x_0}$ is an $\A$-bimodule homomorphism, then $id_{\A, x_0}=0$.
\end{enumerate}
\end{rem}
This following sets play an important role in determining the first cohomology group of $\A\ltimes\U$.
\[R_{\A}(\U):=\{r_{a}: \U\rightarrow\U \,  \mid  \, a\in \A\};\]
\[C_{\A}(\U):=\{r_{a} :\U\rightarrow\U \,  \mid  \,  id_{a}=0 \, \, (a\in \A)\};\]
\[I(\U):=\{id_{\U, x}:\U\rightarrow\U \,  \mid  \,  id_{\A, x}=0 \, \, (x\in \U)\};\]
\indent In view of the above remarks, the set $R_{\A}(\U)$ is a linear subspace of $Z^{1}(\U)$. Also $C_{\A}(\U)$ is a linear subspace of $Hom_{\A}(\U) \cap R_{\A}(\U)$ and $I(\U)$ is a linear subspace of $Hom_{\A}(\U) \cap N^{1}(\U)$. Indeed, we have the following inclusion linear subspaces;
\[ C_{\A}(\U)+I(\U)\subseteq Hom_{\A}(\U) \cap (R_{\A}(\U)+N^{1}(\U))\subseteq Hom_{\A}(\U) \cap Z^{1}(\U).\]
If $\A$ is commutative, then $R_{\A}(\U)= C_{\A}(\U)$ and if $\U$ is a commutative $\A$-bimodule, then $N^{1}(\U)=I(\U)$. If $\U ^{2}=(0)$, then $Z^{1} (\U)=\mathbb{B}(\U)$ and $N^{1}(\U)=I(\U )=(0)$.
%%**************************************************************************************
%%****************************************************************************
%%**************************************************************************************
\section{Derivations on $\A \ltimes \U$}
In this section we determine the structure of derivations on $\A\ltimes \U$. According to which, we get some results concerning the automatic continuity of derivations on $\A\ltimes \U$. Also we use the results of this section to determine the first cohomology group of $\A\ltimes \U$ in the next sections.
\par 
Throughout this section we always assume that $\A$ and $\U$ are Banach algebras where $\U$ is a Banach $\A$-bimodule with the compatible actions and norm (As in the section 2). In other cases the conditions will be specified.
\par 
In the following theorem the structure of derivations on $\A\ltimes \U$ are determined.
\begin{thm}\label{asll}
Let $D:\A\ltimes \U\rightarrow \A\ltimes \U$ be a map. Then the following conditions are equivalent.
\begin{enumerate}
\item[(i)] $D$ is a derivation.
\item[(ii)] 
\[D((a,x))=(\delta_1 (a)+\tau_1 (x),\delta_2 (a)+\tau_2 (x))\quad\quad (a\in \A,x\in \U)\]
such that 
\begin{enumerate}
\item[(a)] 
$\delta_1:\A\rightarrow\A$ is a derivation.
\item[(b)]
$\delta_2:\A\rightarrow \U$ is a derivation.
\item[(c)]
$\tau_1 :\U\rightarrow \A$ is an $\A$-bimodule homomorphism such that 
$$\tau_1(xy)=0,$$
for all $x,y\in \U$.
\item[(d)]
$\tau_2 :\U\rightarrow \U$ is a linear map such that for every $a\in \A$ and $x,y\in \U$ satisfies the following conditions
\begin{eqnarray*}
\tau_2 (ax)&=&a\tau_2 (x)+\delta_1 (a)x+\delta_2 (a)x; \\
\tau_2 (xa)&=&\tau_2 (x)a+x\delta_1 (a)+x\delta_2 (a); \\
\tau_2(xy)&=&x\tau_1(y)+\tau_1(x)y+x\tau_2(y)+\tau_2(x)y.
\end{eqnarray*}
\end{enumerate}
\end{enumerate}
Moreover, $D$ is an inner derivation if and only if $\delta_1 ,\delta_2$ are inner derivations, $\tau_1 =0$ and if $\delta_1 =ad_{a_0}$ and $ \delta_2 =ad_{x_0}$, then $\tau_2 =ad_{x_0}+r_{a_0}$.
\end{thm}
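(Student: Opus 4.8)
The plan is to exploit the Banach-space decomposition $\A\ltimes\U=\A\oplus\U$ to put any linear self-map $D$ into block form and then read off conditions (a)--(d) by forcing the derivation identity one component at a time. Since $D((a,x))\in\A\oplus\U$ and $(a,x)=(a,0)+(0,x)$, linearity produces linear maps $\delta_1:\A\to\A$, $\delta_2:\A\to\U$, $\tau_1:\U\to\A$, $\tau_2:\U\to\U$ with $D((a,0))=(\delta_1(a),\delta_2(a))$ and $D((0,x))=(\tau_1(x),\tau_2(x))$, so $D$ has exactly the form displayed in (ii). With this in hand, (i)$\Leftrightarrow$(ii) becomes the single problem of translating $D(uv)=uD(v)+D(u)v$, for $u=(a,x)$ and $v=(b,y)$, into relations among the four maps, which I would do by expanding both sides with the rule $(a,x)(b,y)=(ab,\,ay+xb+xy)$ and comparing the $\A$- and $\U$-parts separately.

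The $\A$-component yields $\delta_1(ab)+\tau_1(ay+xb+xy)=a\delta_1(b)+\delta_1(a)b+a\tau_1(y)+\tau_1(x)b$, while the $\U$-component yields a longer identity. The crucial structural remark is that every summand on either side depends on exactly one of the four pairs $(a,b),(a,y),(x,b),(x,y)$ and is separately linear in each entry; consequently the identity ``for all $a,b,x,y$'' decouples into four independent pairwise identities, each obtained by setting the two complementary variables to $0$. Running this specialization on the $\A$-component gives that $\delta_1$ is a derivation (pair $(a,b)$), that $\tau_1$ is an $\A$-bimodule homomorphism (pairs $(a,y)$ and $(x,b)$), and that $\tau_1(xy)=0$ (pair $(x,y)$), i.e. (a) and (c); running it on the $\U$-component gives that $\delta_2$ is a derivation (pair $(a,b)$) together with the three identities of (d) (the remaining pairs). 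Because the decoupling is reversible --- the full identity is recovered by re-summing the four pairwise ones --- this argument proves both implications at once, establishing (i)$\Leftrightarrow$(ii).

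For the ``moreover'' clause I would compute the inner derivation $\mathrm{ad}_{(a_0,x_0)}((a,x))=(a,x)(a_0,x_0)-(a_0,x_0)(a,x)$ directly. Its $\A$-part equals $aa_0-a_0a=ad_{a_0}(a)$ and carries no $x$-dependence, while its $\U$-part splits as $(ax_0-x_0a)+(xa_0-a_0x)+(xx_0-x_0x)=id_{\A,x_0}(a)+r_{a_0}(x)+id_{\U,x_0}(x)$. Reading off the blocks gives $\delta_1=ad_{a_0}$, $\delta_2=ad_{x_0}$, $\tau_1=0$ and $\tau_2=ad_{x_0}+r_{a_0}$, which is the stated necessary condition; the converse is immediate, since those four block identities say precisely that $D=\mathrm{ad}_{(a_0,x_0)}$.

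I expect the subtleties to be bookkeeping rather than conceptual. The one point requiring genuine care is that the implementing pair $(a_0,x_0)$ of an inner derivation need not be unique, so the characterization should be read as: $\delta_1,\delta_2$ are inner with $\tau_1=0$, and there exist implementing elements $a_0,x_0$ (for $\delta_1,\delta_2$) with $\tau_2=ad_{x_0}+r_{a_0}$. The step I would verify most carefully is the converse of the main equivalence --- that the four pairwise identities really do re-sum to the full derivation identity with no residual cross terms --- although the pair-by-pair decoupling makes this transparent, as each term of the expanded $\U$-component is accounted for by exactly one of the relations in (b) and (d).
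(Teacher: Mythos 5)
Your proposal is correct and follows essentially the same route as the paper: decompose $D$ into the four block maps via the Banach-space direct sum $\A\ltimes\U=\A\oplus\U$, specialize the derivation identity to the products $(a,0)(b,0)$, $(a,0)(0,y)$, $(0,x)(b,0)$, $(0,x)(0,y)$ to obtain (a)--(d), observe that re-summing gives the converse, and verify the inner case by directly expanding $(a,x)(a_0,x_0)-(a_0,x_0)(a,x)$. Your extra remarks on the reversibility of the decoupling and the non-uniqueness of the implementing pair are sound refinements of what the paper leaves as ``clear'' and ``a straightforward calculation.''
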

\begin{proof}
$(i)\implies (ii)$. Since $D$ is a linear map and $\A\ltimes \U$ is the direct sum of the linear spaces $\A$ and $\U$, there are some linear maps $\delta_1:\A\rightarrow \A$, $\delta_2:\A\rightarrow \U$, $\tau_1 :\U\rightarrow \A$ and  $\tau_2 :\U\rightarrow \U$ such that for any $(a,x)\in \A\ltimes \U$ we have 
$$D((a,x))=(\delta_1 (a)+\tau_1 (x),\delta_2 (a)+\tau_2 (x)).$$
By applying $D$ on the equality $(a,0)(b,0)=(ab,0)$ we conclude that $\delta_1$ and $\delta_2$ are derivations. Analogously, applying $D$ on the equalities 
\[ (a,0)(0,x)=(0,ax),\,\,\, (0,x)(a,0)=(0,xa) \,\,\, and \,\,\, (0,x)(0,y)=(0,xy),\]
 establishes the desired properties for the maps $\tau_1$ and $\tau_2$ given in parts $(c)$ and $(d)$ respectively.
$(ii)\implies (i)$ Clear.
\par 
The equivalent conditions of inner-ness of $D$ can be obtained by a straightforward calculation.
\end{proof}
In the sequel for a derivation $D$ on $\A\ltimes \U$, we always assume that 
$$D((a,x))=(\delta_1 (a)+\tau_1 (x),\delta_2 (a)+\tau_2 (x))\quad\quad ((a,x)\in \A\ltimes \U)$$ in which the mentioned maps satisfy the conditions of the preceding theorem.
\par 
By Theorem \ref{asll}, if $D$ is an inner derivation on $\A\ltimes \U$, then $\tau_1 =0$. So this question is of interest for a given derivation $D$, under what conditions one has $\tau_1 =0$?
By part $(ii)-(c)$ of Theorem \ref{asll} if $\U^2=\U$ ($\overline{\U^2}=\U$, if $D$ is continuous), then $\tau_1 =0$. If $\U$ has a bounded approximate identity, then by Cohen's factorization theorem we have $\U^2=\U$ and therefore in this case $\tau_1 =0$. All unital Bnach algebras, $C^{*}$-algebras and group algebras have bounded approximate identity. Also if $\U$ is a simple Banach algebra, then $\U^{2}=\U$.
\par 
 The following corollary follows from Theorem \ref{asll}.
 \begin{cor}\label{tak}
 Suppose that $\delta_1:\A\rightarrow \A$,  $\delta_2:\A\rightarrow \U$, $\tau_1 :\U\rightarrow \A$ and $\tau_2 :\U\rightarrow \U$ are linear  maps.
 \begin{enumerate}
 \item[(i)]
 $D:\A\ltimes\U\rightarrow \A\ltimes\U $ defined by $D((a,x))=(\delta_1(a),0)$ is a derivation if and only if $\delta_1$ is a derivation and $\delta_1 (\A)\subseteq ann_{\A}\U$. In this case if $\delta_1 =id_{a_0}$ where $aa_0-a_0 a\in ann_{\A}\U \, \, (a\in \A) $ and for any $x\in \U$, $a_0x =xa_0$, then $D$ is inner.
 \item[(ii)]
  $D:\A\ltimes\U\rightarrow \A\ltimes\U $ with $D((a,x))=(0,\delta_2(a))$ is a derivation if and only if $\delta_2$ is a derivation and $\delta_2 (A)\subseteq ann_{\U}\U$. Moreover, if $\delta_2 =id_{\A, x_0}$ is inner, $ax_0 - x_0a\in ann_{\U}\U$ (for all $a\in \A$) and $x_0\in Z(\U)$, then $D$ is inner.
 \item[(iii)]
$D:\A\ltimes\U\rightarrow \A\ltimes\U $ with $D((a,x))=(\tau_1(x),0)$ is a derivation if and only if $\tau_1(xy)=0, x\tau_1(y)+\tau_1(x)y=0 (x,y\in\U)$. In this case $D$ is inner if and only if $\tau_1 =0$.
\item[(iv)] $D:\A\ltimes\U\rightarrow \A\ltimes\U $ with $D((a,x))=(0,\tau_2(x))$ is a derivation if and only if $\tau_2$ is a derivation and also an $\A$-bimodule homomorphism. In this case $D$ is inner if and only if $\tau_2(x)=r_{a_0}(x)+id_{\U, x_0}(x)$ where $a_0\in Z(\A)$ and $ax_0 =x_0 a$ for all $a\in \A$.
\end{enumerate}
 \end{cor}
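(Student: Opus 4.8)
The plan is to obtain all four parts as direct specializations of Theorem \ref{asll}, in each case writing $D$ in the canonical form $(a,x)\mapsto(\delta_1(a)+\tau_1(x),\delta_2(a)+\tau_2(x))$ with three of the four component maps set identically to zero, and then reading off which of the conditions (a)--(d) remain.

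First I would treat the ``derivation'' equivalences. In part (i) one has $\delta_2=\tau_1=\tau_2=0$, so conditions (a)--(c) hold trivially and the three identities of (d), evaluated at $\tau_2\equiv0$, collapse to $\delta_1(a)x=x\delta_1(a)=0$ for all $a\in\A$ and $x\in\U$; together with (a) this is exactly ``$\delta_1$ is a derivation with $\delta_1(\A)\subseteq ann_\A\U$''. Part (ii), with $\delta_1=\tau_1=\tau_2=0$, is the mirror image: the identities of (d) force $\delta_2(a)x=x\delta_2(a)=0$, i.e. $\delta_2(\A)\subseteq ann_\U\U$. For part (iii) I set $\delta_1=\delta_2=\tau_2=0$; here condition (c) contributes that $\tau_1$ is an $\A$-bimodule homomorphism with $\tau_1(xy)=0$, while the third identity of (d) reduces to $x\tau_1(y)+\tau_1(x)y=0$. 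Finally, part (iv) takes $\delta_1=\delta_2=\tau_1=0$, so the first two identities of (d) say $\tau_2(ax)=a\tau_2(x)$ and $\tau_2(xa)=\tau_2(x)a$ (that is, $\tau_2$ is an $\A$-bimodule homomorphism) and the third says $\tau_2(xy)=x\tau_2(y)+\tau_2(x)y$ (that is, $\tau_2$ is a derivation on $\U$). Since Theorem \ref{asll} is itself an equivalence, each of these readings gives both directions at once.

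Next I would settle the innerness statements using the last clause of Theorem \ref{asll}, namely that $D$ is inner exactly when $\tau_1=0$ and there exist $a_0\in\A$, $x_0\in\U$ with $\delta_1=id_{a_0}$, $\delta_2=id_{\A,x_0}$ and $\tau_2=id_{\U,x_0}+r_{a_0}$. In part (i) the candidate implementing element is $(a_0,0)$: the hypothesis $aa_0-a_0a\in ann_\A\U$ is precisely $\delta_1(\A)\subseteq ann_\A\U$, while $a_0x=xa_0$ gives $r_{a_0}=0$, so $\tau_2=0=id_{\U,0}+r_{a_0}$ and $\delta_2=0=id_{\A,0}$; hence $D=id_{(a_0,0)}$. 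Part (ii) uses the mirror candidate $(0,x_0)$, where $x_0\in Z(\U)$ gives $id_{\U,x_0}=0$ and $r_0=0$, so again $\tau_2=0$. In part (iii), since $\delta_1=\delta_2=\tau_2=0$ one may take $a_0=x_0=0$, so the only surviving requirement is $\tau_1=0$; thus $D$ is inner iff $\tau_1=0$ (in which case $D=0$). In part (iv), $\delta_1=id_{a_0}=0$ and $\delta_2=id_{\A,x_0}=0$ translate to $a_0\in Z(\A)$ and $ax_0=x_0a$ for all $a\in\A$, and the remaining condition $\tau_2=id_{\U,x_0}+r_{a_0}$ is the asserted form.

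I do not expect a genuine obstacle here, since once Theorem \ref{asll} is available the whole argument is bookkeeping. The only points deserving care are the innerness parts: the pair $(a_0,x_0)$ implementing $(\delta_1,\delta_2)$ is not unique, so one must check that the extra commutation hypotheses are exactly what kill the residual $r_{a_0}$ term in (i) and the residual $id_{\U,x_0}$ term in (ii), reconciling $\tau_2=0$ with the formula $\tau_2=id_{\U,x_0}+r_{a_0}$; and one should keep in mind that in (iii) the admissibility of $\tau_1$ already presupposes, via condition (c), that it is an $\A$-bimodule homomorphism.
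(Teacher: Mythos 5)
Your proposal is correct and is essentially the paper's own argument: the paper gives no written proof, stating only that the corollary follows from Theorem \ref{asll}, and your specialization of conditions (a)--(d) with three of the four component maps set to zero, together with the direct exhibition of the implementing elements $(a_0,0)$ and $(0,x_0)$ for the innerness claims, is exactly the intended bookkeeping. Your remarks on the non-uniqueness of the implementing pair and on the implicit bimodule-homomorphism hypothesis in part (iii) are apt but do not change the argument.
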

In the following we give an example showing that the condition $\tau_1 =0$ does not hold in general.
\begin{exm}
Let $\mathcal{B}$ be a Banach algebra and $\A:=T(\mathcal{B},\mathcal{B})$ and let $\U :=\mathcal{B}$. The Banach algebra $\U$ becomes a Banach $\A$-bimodule by the following compatible module actions:
\[(a,b)x=ax,\quad  x(a,b)= xa\quad\quad (a,b\in \A , x\in \U).\]
Now consider the Banach algebra $T(\A,\U)$ and define the map $\tau_1:\U\rightarrow \A$ by 
$$\tau_1(x)=(0,x)\quad\quad (x\in \U)$$
Then $\tau_1\neq 0$ is an $\A$-bimodule homomorphism with $\tau_1(\U ^2)=(0)$ and for any $x,y\in \U$ we have 
$$x\tau_1(y)+\tau_1(x)y=x(0,y)+(0,x)y=0$$
Now by Corollary \ref{tak} it can be seen that the map $D$ on $T(\A,\U)$ defined by $D((a,b),x)=(\tau_1(x),0)$ is a derivation in which $\tau_1\neq 0$.
\end{exm}
In this example $\tau_1\neq 0$ but $x\tau_1(y)+\tau_1(x)y=0$ $ (x,y\in\U)$. The next example shows that $\tau_1$ does not satisfy the condition $x\tau_1(y)+\tau_1(x)y=0 \,\, (x,y\in\U)$ in general.
\begin{exm}
Let $\A$ be a Banach algebra, $\mathcal{C}$ be a Banach $\A$-bimodule and $\gamma :\mathcal{C}\rightarrow \A$ be a nonzero $\A$-bimodule homomorphism such that 
\[ c\gamma (c')+\gamma (c)c' =0\]
for all $c,c'\in\mathcal{C}$.
\\
Let $\U :=\A\times \mathcal{C}$. With the usual actions, $\U$ is a Banach $\A$-bimodule. Consider the multiplication on $\U$ given by
\[(x,y)(x',y')=(xx',0)\quad\quad ((x,y),(x',y')\in\U). \]
With this multiplication $\U$ is a Banach algebra such that the multiplication on $\U$ is compatible with its module actions. Now we may consider the Banach algebra $\A\ltimes\U$. Define the maps $\tau_1:\U\rightarrow\A$ and $\tau_2:\U\rightarrow\U$ by 
\[\tau_1 ((x,y))=\gamma(y)\quad , \quad \tau_2((x,y))=(-\gamma (y),0).\]
$\tau_1$ and $\tau_2$ are $\A$-bimodule homomorphisms such that $\tau_1(\U ^2)=0$, $\tau_1\,,\tau_2\neq 0$ and 
\[0=\tau_2((x,y)(x',y'))=(x,y)\tau_1 ((x',y'))+(x,y)\tau_2((x',y'))+\tau_1 ((x,y)) (x',y')+\tau_2((x,y))(x',y').\]
By Theorem \ref{asll} it can be seen that $D=(\tau_1,\tau_2)$ is a derivation on $\A\ltimes\U$. If we assume further that $\A$ is a unital algebra, then for any $y,y'\in\mathcal{C}$ with $\gamma (y)\neq 0$,
\begin{eqnarray*}
(0,y)\tau_1((1,y'))+\tau_1((0,y))(1,y')&=&(0,y)\gamma (y')+\gamma (y)(1,y')\\&=&(\gamma(y),y\gamma (y')+\gamma(y)y')\\&=&(\gamma (y),0)\neq 0.
\end{eqnarray*}
 \end{exm}
In the following we investigate the automatic continuity of derivations on $\A\ltimes\U$. It is clear that a derivation $D$ on $\A\ltimes \U$ is continuous if and only if the maps $\delta_1 , \delta_2 , \tau_1$ and $\tau_2$ are continuous. 
\par 
 In the next theorem we state the relation between separating spaces of $\delta_i$ and $\tau_i$ ($i=1,2)$ and then by using this theorem, we obtain some results concerning the automatic continuity of the derivations on $\A\ltimes\U$.
\begin{thm}\label{joda}
Let $D$ be a derivation on $\A\ltimes\U$ such that 
$$D((a,x))=(\delta_1 (a)+\tau_1 (x),\delta_2 (a)+\tau_2 (x))\quad\quad ((a,x)\in \A\ltimes \U),$$
then
\begin{enumerate}
\item[(i)]
$\C (\tau_1)$ is an ideal in $\A$. If $\tau_2$ is continuous, then $\C (\tau_1)\subseteq ann_{\A}\U$ and if $\tau_2(\U)\subseteq ann_{\U}\U$, then $\C (\tau_1)\subseteq(\C (\tau_2):\U)_\A$.
\item[(ii)]
$\C (\tau_2)$ is an $\A$-sub-bimodule of $\U$ and if $\tau_1$ is continuous or $\tau_1 (\U)\subseteq ann_{\A}\U$, then $\C (\tau_2)$ is an ideal in $\U$.
\item[(iii)]
$\C (\delta_1)$ is an ideal in $\A$ and if $\delta_2$ is continuous or $\delta_2 (\A)\subseteq ann_{\U}\U$, then $\C (\delta_1)\subseteq(\C (\tau_2):\U)_\A$.
\item[(iv)]
$\C (\delta_2)$ is an $\A$-subbimodule of $\U$ and if $\delta_1$ is continuous or $\delta_1 (\A)\subseteq ann_{\A}\U$, then $\C (\delta_2)\subseteq(\C (\tau_2):\U)_\U$.
\end{enumerate}
\end{thm}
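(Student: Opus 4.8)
The plan is to establish all four parts by the standard separating-space technique, exploiting the defining identities for $\delta_1,\delta_2,\tau_1,\tau_2$ recorded in Theorem \ref{asll}. Recall that each $\C(T)$ is automatically a closed linear subspace of the target space, so in every case it remains only to verify the stated closure properties and the refined inclusions. The uniform mechanism is this: given $y$ in one of the separating spaces, choose a null sequence $(w_n)$ in $\A$ or $\U$ with $w_n\to 0$ and $T(w_n)\to y$; multiply the relevant identity of Theorem \ref{asll} by a fixed element $z$ on the appropriate side; then let $n\to\infty$, discarding every term in which a fixed factor is multiplied by something tending to $0$, since the algebra multiplications and the module actions of $\A$ on $\U$ are jointly continuous and contractive.

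First I would dispose of the structural (first) assertions in (i)--(iv), which need only the ``pure'' identities. For $\tau_1$, the $\A$-bimodule homomorphism property gives $\tau_1(ax_n)=a\tau_1(x_n)\to ay$ and $\tau_1(x_n a)=\tau_1(x_n)a\to ya$ with $ax_n,x_n a\to 0$, so $ay,ya\in\C(\tau_1)$ and $\C(\tau_1)$ is an ideal of $\A$. Identically, the Leibniz identity for the derivation $\delta_1$ shows $\C(\delta_1)$ is an ideal of $\A$ (the cross term $\delta_1(a)x_n$ dies because $x_n\to 0$), the Leibniz identity for $\delta_2$ shows $\C(\delta_2)$ is an $\A$-subbimodule of $\U$, and the two module-type identities for $\tau_2$ in Theorem \ref{asll}(ii)(d) show $\C(\tau_2)$ is an $\A$-subbimodule (here both cross terms $\delta_1(a)x_n$ and $\delta_2(a)x_n$ vanish in the limit).

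The refined inclusions are where the hypotheses of each clause enter, and the single crucial tool is the multiplicative identity $\tau_2(xy)=x\tau_1(y)+\tau_1(x)y+x\tau_2(y)+\tau_2(x)y$ together with the two module identities for $\tau_2$. For the second part of (i), take $y\in\C(\tau_1)$ with $x_n\to 0$, $\tau_1(x_n)\to y$, and feed $x_n z$ (resp.\ $zx_n$) into the multiplicative identity: continuity of $\tau_2$ forces $\tau_2(x_n z)\to 0$ and the $\tau_2(x_n)z$ term to vanish, leaving $yz=0$ (resp.\ $zy=0$), hence $\C(\tau_1)\subseteq ann_\A\U$; whereas if instead $\tau_2(\U)\subseteq ann_\U\U$ then the $x_n\tau_2(z)$ and $\tau_2(x_n)z$ terms are \emph{identically} $0$, leaving $\tau_2(x_n z)\to yz$ with $x_n z\to 0$, i.e.\ $yz\in\C(\tau_2)$, and symmetrically $zy\in\C(\tau_2)$, giving $\C(\tau_1)\subseteq(\C(\tau_2):\U)_\A$. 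The ideal claim in (ii) is obtained the same way by letting $x_n z$ pass through the multiplicative identity, where now either continuity of $\tau_1$ kills $\tau_1(x_n)z$ and $x_n\tau_1(z)$ in the limit, or the condition $\tau_1(\U)\subseteq ann_\A\U$ kills them identically, the surviving term being $\tau_2(x_n)z\to yz$. The inclusions in (iii) and (iv) follow by the identical pattern, this time feeding a null sequence $a_n\to 0$ with $\delta_1(a_n)\to y$ (for (iii)) or $\delta_2(a_n)\to y$ (for (iv)) into the two module identities for $\tau_2$; the hypothesis (continuity of $\delta_2$, resp.\ $\delta_1$, or the respective annihilator condition) is precisely what is needed to annihilate the term $\delta_2(a_n)z$ (resp.\ $\delta_1(a_n)z$), so that the surviving term exhibits $yz$ and $zy$ as elements of $\C(\tau_2)$, yielding membership in $(\C(\tau_2):\U)_\A$ and $(\C(\tau_2):\U)_\U$ respectively.

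There is no deep obstacle here; the work is entirely bookkeeping. The one point demanding care --- and the place where a naive attempt could go wrong --- is keeping straight, in each of the many products appearing in the $\tau_2$-identities, whether a given juxtaposition is multiplication inside $\A$, multiplication inside $\U$, or a left/right module action of $\A$ on $\U$, since the correct reading determines both which fixed-times-null terms may be discarded and how each hypothesis (continuity versus annihilator) removes the cross terms. Choosing, in each clause, the correct one of the three $\tau_2$-identities and the correct side on which to multiply is what makes the surviving limit equal exactly the desired $yz$ or $zy$.
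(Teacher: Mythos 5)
Your proposal is correct and follows essentially the same route as the paper: the structural claims come from the Leibniz/module-homomorphism identities of Theorem \ref{asll}, and each refined inclusion is obtained by passing a null sequence through the appropriate $\tau_2$-identity and using the continuity or annihilator hypothesis to remove exactly the obstructing cross term. The only cosmetic difference is that you invoke continuity of $\tau_1$ to kill the term $x_n\tau_1(z)$ in part (ii), which already tends to $0$ because $x_n\to 0$; this is redundant but harmless.
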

\begin{proof}
$(i)$ Let $a\in \C (\tau_1)$. Thus there is a sequence $\{x_n\}$ in $\U$ such that 
$x_n\rightarrow 0$ and $ \tau_1(x_n)\rightarrow a$.
For any $b\in \A$, $bx_n\rightarrow 0$, so $\tau_1(bx_n)=b\tau_1 (x_n)\rightarrow ba $ and hence $ba\in \C (\tau_1)$. Similarly, $ab\in \C (\tau_1)$. Therefore $\C (\tau_1)$ is an ideal in $\A$.\\
 Now for every $x\in \U$ we have  
$$\tau_2(xx_n)=x\tau_1(x_n)+x\tau_2(x_n)+\tau_1(x)x_n+\tau_2(x)x_n$$
and
$$\tau_2(x_nx)=x_n\tau_1(x)+x_n\tau_2(x)+\tau_1(x_n)x+\tau_2(x_n)x.$$
If $\tau_2$ is continuous, taking limit of the above equations gives $ax=xa=0$ ($x\in \U$). So $a\in ann_{\A}\U$. Hence $\C (\tau_1)\subseteq ann_{\A}\U$. 
\par 
If $\tau_2 (\U)\subseteq ann_{\U}\U$, by taking limit of the above equations we get 
$$\tau_2(xx_n)\rightarrow xa\quad  \text{and} \quad \tau_2(x_n x)\rightarrow ax.$$ So $ax,xa\in \C (\tau_2)$ and hence $\C (\tau_1)\subseteq(\C (\tau_2):\U)_\A$.
\\
$(ii)$ If $x_n\rightarrow 0$ and $\tau_2(x_n)\rightarrow x$, then for any $a\in \A$ we have 
$$\tau_2(ax_n)=a\tau_2(x_n)+\delta_1(a)x_n+\delta_2(a)x_n$$
and
$$\tau_2(x_{n}a)=\tau_2(x_n)a+x_n\delta_1(a)+x_n\delta_2(a).$$
By taking limits of these equations we get 
$$\tau_2(ax_n)\rightarrow ax\quad\text{and}\quad \tau_2(x_{n}a)\rightarrow xa.$$
So $ax,xa\in \C (\tau_2)$ and hence $\C (\tau_2)$ is an $\A$-subbimodule of $\U$.
\par 
For any $y\in \U$ we have
$$\tau_2(yx_n)=\tau_1(y)x_n+\tau_2(y)x_n+y\tau_1(x_n)+y\tau_2(x_n)$$
$$\tau_2(x_n y)=\tau_1(x_n)y+\tau_2(x_n)y+x_n\tau_1(y)+x_n\tau_2(y).$$
If $\tau_1$ is continuous or $\tau_1(\U)\subseteq ann_{\A} \U$, by taking limit we obtain 
$$\tau_2(yx_n)\rightarrow yx\quad \text{and}\quad \tau_2(x_{n}y)\rightarrow xy.$$
Therefore $xy,yx\in \C (\tau_2)$ and $\C (\tau_2)$ is an ideal.
\\
$(iii)$ Let $a_n\rightarrow 0 $ and $\delta_1 (a_n)\rightarrow a$. Since $\delta_1$ is a derivation, it follows that $\C (\delta_1)$ is an ideal in $\A$. For every $x\in \U$ we have 
$$\tau_2(a_nx)=a_n\tau_2(x)+\delta_1(a_n)x+\delta_2(a_n)x$$
and
$$\tau_2(ax_n)=\tau_2(x)a_n+x\delta_1(a_n)+x\delta_2(a_n).$$
If $\delta_2$ is continuous or $\delta_2(\A)\subseteq ann_{\U}\U$, taking limit of the above equations yields 
$$\tau_2(a_n x)\rightarrow ax\quad \text{and}\quad \tau_2(x a_n)\rightarrow xa.$$
Since $xa_n, a_nx\rightarrow 0$, it follows that $ax,xa\in \C (\tau_2)$. So $\C (\delta_1)\subseteq(\C (\tau_2):\U)_\A$.
\\
$(iv)$ The proof is similar to part $(iii)$.
\end{proof}
Now we give some results of the preceding theorem.
\begin{prop}\label{au1}
Let $D$ be a derivation on $\A\ltimes\U$ such that 
$$D((a,x))=(\delta_1 (a)+\tau_1 (x),\delta_2 (a)+\tau_2 (x))\quad\quad ((a,x)\in \A\ltimes \U),$$
then 
\begin{enumerate}
\item[(i)]
if $ann_{\A}\U =(0)$, $\tau_2$ and $\delta_2$ are continuous, then $D$ is continuous.
\item[(ii)]
if $ann_{\U}\U =(0)$, $\tau_2$ and $\delta_1$ are continuous, then $\delta_2$ is continuous. If we also add the assumption of continuity of $\tau_1$, then $D$ is continuous.
\end{enumerate}
\end{prop}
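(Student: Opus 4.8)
The plan is to reduce the whole statement to Theorem \ref{joda} together with two standard facts already recorded above: first, by the closed graph theorem a linear map $T$ is continuous precisely when $\C(T)=(0)$; and second, the remark preceding Theorem \ref{joda} that $D$ is continuous if and only if all four component maps $\delta_1,\delta_2,\tau_1,\tau_2$ are continuous. The only extra ingredient is the identification $((0):\U)_\A=ann_\A\U$ noted in Section 1, together with its exact analogue for the regular action of $\U$ on itself, namely $((0):\U)_\U=ann_\U\U$. The guiding idea is that whenever a separating space of one component is forced to vanish (because that component is assumed continuous), the corresponding ``quotient'' set $(\C(\tau_2):\U)_{\bullet}$ collapses to an annihilator, which our standing hypotheses make trivial, thereby killing the separating space of another component.

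For part $(i)$ I would argue as follows. Since $\tau_2$ is continuous, part $(i)$ of Theorem \ref{joda} gives $\C(\tau_1)\subseteq ann_\A\U=(0)$, so $\tau_1$ is continuous. Because $\delta_2$ is continuous, part $(iii)$ of Theorem \ref{joda} yields $\C(\delta_1)\subseteq(\C(\tau_2):\U)_\A$; but continuity of $\tau_2$ forces $\C(\tau_2)=(0)$, whence $(\C(\tau_2):\U)_\A=((0):\U)_\A=ann_\A\U=(0)$, so $\C(\delta_1)=(0)$ and $\delta_1$ is continuous. Thus $\delta_1,\delta_2,\tau_1,\tau_2$ are all continuous, and hence so is $D$.

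For part $(ii)$ the reasoning is symmetric but uses the $\U$-sided bracket. Since $\delta_1$ is continuous, part $(iv)$ of Theorem \ref{joda} gives $\C(\delta_2)\subseteq(\C(\tau_2):\U)_\U$; continuity of $\tau_2$ again makes $\C(\tau_2)=(0)$, so $(\C(\tau_2):\U)_\U=((0):\U)_\U=ann_\U\U=(0)$, forcing $\C(\delta_2)=(0)$ and establishing the continuity of $\delta_2$. This proves the first assertion of $(ii)$. Adjoining the extra hypothesis that $\tau_1$ is continuous, we now have all four of $\delta_1,\delta_2,\tau_1,\tau_2$ continuous, and therefore $D$ is continuous.

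I do not anticipate any genuine obstacle here, since all the substantive work has already been carried out in Theorem \ref{joda}. The points requiring care are purely bookkeeping: invoking the correct auxiliary clause of each part of that theorem (continuity of $\tau_2$, of $\delta_2$, of $\delta_1$ respectively), and correctly reading off that $((0):\U)_\A$ and $((0):\U)_\U$ coincide with $ann_\A\U$ and $ann_\U\U$, which are exactly the sets our hypotheses assume to be zero.
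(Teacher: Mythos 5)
Your proof is correct and follows essentially the same route as the paper's own argument: both parts are reduced to clauses $(i)$, $(iii)$ and $(iv)$ of Theorem \ref{joda}, with the continuity hypotheses collapsing the relevant separating spaces into $ann_{\A}\U$ and $ann_{\U}\U$, which vanish by assumption. No gaps.
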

\begin{proof}
$(i)$ Since $\tau_2$ is continuous so $\C (\tau_2)=0$. By Theorem \ref{joda}-$(i)$ from the continuity of $\tau_2$ we conclude that $\C (\tau_1)\subseteq ann_{\A}\U=(0)$, thus $\tau_1$ is continuous. Moreover, since the conditions of Theorem \ref{joda}-$(iii)$ hold, we have
$$\C (\delta_1)\subseteq (\C (\tau_2):\U)_\A=((0):\U)_\A =ann_{\A}\U=(0).$$ Therefore $\delta_1$ is continuous. So $D$ is continuous.
\\
$(ii)$ Since the conditions of Theorem \ref{joda}-$(iv)$ hold and $\tau_2$ is continuous, it follows that
 $$\C (\delta_2)\subseteq (\C (\tau_2):\U)_\U=((0):\U)_\U=ann_{\U}\U =(0).$$ So $\delta_2$ is continuous . If $\tau_1$ is continuous, then we have the continuity of all the maps and thus $D$ is continuous.
\end{proof}
If we add the condition $ann_{\A}\U=(0)$ to part $(ii)$ of the previous proposition, then as in part $(i)$, this implies that $\tau_1$ is continuous and in this case $D$ is continuous as well.
\par 
Now by the preceding proposition, we obtain some results concerning the automatic continuity of the derivations on $\A\ltimes\U$.
\begin{cor}\label{n1}
Suppose that for every derivation $D$ on $\A\ltimes \U$ we have 
$x\tau_1 (y)+\tau_1(x)y=0$ $ (x,y\in \U)$. If $ann_{\A}\U=(0)$ and each derivation from $\U$ to $\U$ and each derivation from $\A$ to $\U$ is continuous, then every derivation on $\A\ltimes \U$ is continuous.
\end{cor}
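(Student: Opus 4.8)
The plan is to reduce everything to Proposition~\ref{au1}(i), which guarantees that when $ann_{\A}\U=(0)$ and both $\tau_2$ and $\delta_2$ are continuous, the entire derivation $D$ is continuous. So it suffices to extract the continuity of $\delta_2$ and $\tau_2$ from the stated hypotheses, and the whole argument is an assembly of Theorem~\ref{asll} and Proposition~\ref{au1}.

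First I would invoke Theorem~\ref{asll} to write an arbitrary derivation $D$ on $\A\ltimes\U$ in its canonical form $D((a,x))=(\delta_1(a)+\tau_1(x),\delta_2(a)+\tau_2(x))$, where $\delta_1,\delta_2,\tau_1,\tau_2$ satisfy the four groups of conditions listed there. Since $\delta_2:\A\rightarrow\U$ is a derivation from $\A$ into $\U$, the hypothesis that every derivation from $\A$ to $\U$ is continuous at once gives the continuity of $\delta_2$.

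The decisive step is to recognize that $\tau_2$ is in fact a derivation on $\U$. By assumption the identity $x\tau_1(y)+\tau_1(x)y=0$ holds for all $x,y\in\U$ (applied to the map $\tau_1$ attached to our $D$). Substituting this into the third relation of Theorem~\ref{asll}(ii)(d), namely $\tau_2(xy)=x\tau_1(y)+\tau_1(x)y+x\tau_2(y)+\tau_2(x)y$, collapses it to $\tau_2(xy)=x\tau_2(y)+\tau_2(x)y$. As $\tau_2$ is linear, this exhibits $\tau_2$ as a derivation from $\U$ into $\U$, so the hypothesis that every such derivation is continuous yields the continuity of $\tau_2$.

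Finally, with $ann_{\A}\U=(0)$ together with the continuity of $\tau_2$ and $\delta_2$ just established, Proposition~\ref{au1}(i) applies directly and delivers the continuity of $D$. I expect no genuine obstacle beyond the one observation that the standing hypothesis on $\tau_1$ is precisely what is needed to turn the cocycle-type relation for $\tau_2$ into the Leibniz rule; once that simplification is noticed, the continuity of $\delta_2$ and $\tau_2$ feeds straight into Proposition~\ref{au1}(i).
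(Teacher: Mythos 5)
Your proposal is correct and follows exactly the paper's own argument: use Theorem~\ref{asll} to get the canonical form, observe that the hypothesis on $\tau_1$ reduces the relation for $\tau_2$ to the Leibniz rule so that $\tau_2$ is a derivation on $\U$, conclude continuity of $\delta_2$ and $\tau_2$ from the hypotheses, and finish with Proposition~\ref{au1}(i). No discrepancies to report.
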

\begin{proof}
By Theorem \ref{asll}, for any derivation $D=(\delta_1 +\tau_1,\delta_2+\tau_2)$ on $\A\ltimes \U$, the maps $\delta_2:\A \rightarrow\U$ and $\tau_2:\U\rightarrow \U$ are derivations, since we have 
$x\tau_1 (y)+\tau_1(x)y=0$ $ (x,y\in \U)$.  So $\delta_2$ and $\tau_2$ are continuous by the hypothesis. Now the continuity of $D$ follows from Proposition \ref{au1}-$(i)$.
\end{proof}
\begin{cor}\label{n2}
Suppose that for any derivation $D$ on $\A\ltimes \U$ we have  $x\tau_1 (y)+\tau_1(x)y=0 $ $(x,y\in \U)$. If $ann_{\U}\U=(0)$, every derivation from $\U$ to $\U$ and every derivation from $\A$ to $\A$ is continuous and every $\A$-bimodule homomorphism from $\U$ to $\A$ is continuous, then every derivation on $\A\ltimes \U$ is continuous.
\end{cor}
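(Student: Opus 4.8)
The plan is to decompose the derivation via Theorem~\ref{asll} and then reduce the whole problem to Proposition~\ref{au1}-$(ii)$. First I would write an arbitrary derivation $D$ on $\A\ltimes\U$ in the standard form $D((a,x))=(\delta_1(a)+\tau_1(x),\delta_2(a)+\tau_2(x))$, so that $\delta_1:\A\to\A$ and $\delta_2:\A\to\U$ are derivations, $\tau_1:\U\to\A$ is an $\A$-bimodule homomorphism with $\tau_1(\U^2)=(0)$, and $\tau_2:\U\to\U$ satisfies the three identities of part $(d)$. The crucial first observation is that the standing hypothesis $x\tau_1(y)+\tau_1(x)y=0$ collapses the third identity of part $(d)$, namely $\tau_2(xy)=x\tau_1(y)+\tau_1(x)y+x\tau_2(y)+\tau_2(x)y$, into $\tau_2(xy)=x\tau_2(y)+\tau_2(x)y$. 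Hence $\tau_2$ is a derivation from $\U$ to $\U$.

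Once $\tau_2$ is recognised as a derivation, the three continuity hypotheses can be applied directly to three of the four constituent maps. By assumption every derivation from $\U$ to $\U$ is continuous, so $\tau_2$ is continuous; since $\delta_1$ is a derivation from $\A$ to $\A$ and every such derivation is continuous, $\delta_1$ is continuous; and since $\tau_1$ is an $\A$-bimodule homomorphism from $\U$ to $\A$ and every such map is continuous, $\tau_1$ is continuous. The only map not yet handled is $\delta_2$, and this is where the annihilator condition $ann_{\U}\U=(0)$ enters. Here I would invoke Proposition~\ref{au1}-$(ii)$: with $ann_{\U}\U=(0)$ and with $\tau_2$ and $\delta_1$ continuous, that proposition yields the continuity of $\delta_2$. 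Since $\delta_1,\delta_2,\tau_1,\tau_2$ are now all continuous, and a derivation on $\A\ltimes\U$ is continuous precisely when these four maps are, $D$ is continuous.

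The main obstacle is not any single computation but the correct bookkeeping of separating spaces hidden inside the continuity of $\delta_2$: this does not follow from the other three continuities by elementary means, and must be extracted from the inclusion $\C(\delta_2)\subseteq(\C(\tau_2):\U)_\U$ of Theorem~\ref{joda}-$(iv)$. The point to check carefully is that, once $\tau_2$ is continuous so that $\C(\tau_2)=(0)$, the condition $ann_{\U}\U=(0)$ forces $(\C(\tau_2):\U)_\U=((0):\U)_\U=ann_{\U}\U=(0)$, which is exactly the mechanism packaged in Proposition~\ref{au1}-$(ii)$; everything else in the argument is a direct appeal to the hypotheses.
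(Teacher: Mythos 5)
Your proposal is correct and follows the paper's own route exactly: recognize via the hypothesis $x\tau_1(y)+\tau_1(x)y=0$ that $\tau_2$ is a derivation on $\U$, apply the three continuity hypotheses to $\delta_1$, $\tau_1$, $\tau_2$, and then obtain the continuity of $\delta_2$ from Proposition~\ref{au1}-$(ii)$ using $ann_{\U}\U=(0)$. Your additional unpacking of the separating-space mechanism from Theorem~\ref{joda}-$(iv)$ is accurate but not needed beyond the citation.
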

\begin{proof}
Consider the derivation $D=(\delta_1 +\tau_1,\delta_2+\tau_2)$. By Theorem \ref{asll}, the maps 
 $\delta_1:\A\rightarrow \A$ and  $\tau_2:\U\rightarrow \U$ are derivation and  $\tau_1:\U\rightarrow \A$ is an $\A$-bimodule homomorphism. By the hypothesis, $\delta_1, \tau_1$ and $\tau_2$ are continuous. Therefore  $D$ is continuous by Proposition \ref{au1}-$(ii)$.
\end{proof}
By Johnson's and Sinclair's theorem \cite{john1}, every derivation on a semisimple Banach algebra is continuous, so we have the following corollary.
\begin{cor}\label{semi}
Suppose that $\A$ and $\U$ are semisimple Banach algebras such that $\U$ has a bounded approximate identity. Then every derivation on $\A\ltimes \U$ is continuous.
\end{cor}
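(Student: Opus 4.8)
The plan is to prove Corollary \ref{semi} by showing that each of the four component maps attached to a derivation $D$ on $\A\ltimes\U$ is continuous; recall from the discussion preceding Theorem \ref{joda} that $D$ is continuous if and only if $\delta_1,\delta_2,\tau_1,\tau_2$ all are. So write $D((a,x))=(\delta_1(a)+\tau_1(x),\delta_2(a)+\tau_2(x))$ as in Theorem \ref{asll}. The first move is to dispose of $\tau_1$: since $\U$ has a bounded approximate identity, Cohen's factorization theorem gives $\U^2=\U$, and then part $(ii)$-$(c)$ of Theorem \ref{asll} forces $\tau_1=0$ (exactly the observation recorded immediately after that theorem). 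In particular $\tau_1$ is trivially continuous, and the defining identity for $\tau_2$ collapses to $\tau_2(xy)=x\tau_2(y)+\tau_2(x)y$, so that $\tau_2:\U\to\U$ is an honest derivation on $\U$.

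Next I would invoke the Johnson--Sinclair theorem \cite{john1} twice. Since $\A$ is semisimple and $\delta_1:\A\to\A$ is a derivation, $\delta_1$ is automatically continuous; likewise, since $\U$ is semisimple and $\tau_2:\U\to\U$ is (by the previous step) a derivation on $\U$, $\tau_2$ is continuous as well. At this stage only $\delta_2:\A\to\U$ remains, and this is the genuine obstacle: $\delta_2$ is a derivation from $\A$ into the bimodule $\U$, not into $\A$ itself, so the Johnson--Sinclair theorem does not apply to it directly, and its continuity has to be extracted from the interaction between the components rather than from semisimplicity alone.

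To overcome this I would use the separating-space analysis of Theorem \ref{joda}, packaged as Proposition \ref{au1}$(ii)$. The one extra input needed is $ann_{\U}\U=(0)$, which follows at once from the bounded approximate identity $(e_\lambda)$: if $x\U=\U x=(0)$ then $x=\lim_\lambda xe_\lambda=0$. Concretely, Theorem \ref{joda}$(iv)$ (applicable because $\delta_1$ is continuous) gives $\C(\delta_2)\subseteq(\C(\tau_2):\U)_{\U}=((0):\U)_{\U}=ann_{\U}\U=(0)$, using that $\C(\tau_2)=(0)$ by the continuity of $\tau_2$; hence $\delta_2$ is continuous. Having now established the continuity of $\delta_1,\delta_2,\tau_2$ together with $\tau_1=0$, the final clause of Proposition \ref{au1}$(ii)$ yields that $D$ is continuous, which proves Corollary \ref{semi}. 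The only delicate point in the whole argument is the continuity of $\delta_2$, and I expect that step to be where the work really lies.
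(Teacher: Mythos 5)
Your proposal is correct and follows essentially the same route as the paper: Cohen factorization to kill $\tau_1$, Johnson--Sinclair for $\delta_1$ and $\tau_2$, the observation that $ann_{\U}\U=(0)$, and then Proposition \ref{au1}$(ii)$ (whose proof is exactly the Theorem \ref{joda}$(iv)$ computation you spell out) to handle $\delta_2$. The only difference is that you unpack the separating-space step explicitly rather than citing the proposition as a black box, which is a matter of presentation, not substance.
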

\begin{proof}
Let $D=(\delta_1 +\tau_1,\delta_2+\tau_2)$ be a derivation on $\A\ltimes \U$. By the Cohen's factorization theorem, $\U ^2 =\U$ and so $\tau_1=0$. Also from the hypothesis we conclude that $ann_{\U}\U =(0)$. By Johnson's and Sinclair's theorem \cite{john1}, every derivation on $\A$ and $\U$ is continuous. Now by Proposition \ref{au1}-$(ii)$, every derivation on $\A\ltimes \U$ is continuous.
\end{proof}
All $C^{*}$-algebras, semigroup algebras, measure algebras and unital simple algebras are semisimple Banach algebras with bounded approximate identity. 
\par 
In next results we investigate some conditions under which we can express the derivation $D$ as the sum of two derivations one of which being continuous.
\begin{prop}\label{taj1}
Let $D$ be a derivation on $\A\ltimes\U$ such that 
\[D((a,x))=(\delta_1 (a)+\tau_1 (x),\delta_2 (a)+\tau_2 (x))\quad\quad ((a,x)\in \A\ltimes \U),\]
then
\begin{enumerate}
\item[(i)]
if $ann_{\U}\U =(0)$, $x\tau_1 (y)+\tau_1(x)y=0 $ $(x,y\in \U)$, $\delta_1$ and $\tau_2$ are continuous, then $D=D_1 +D_2$ in which 
$$D_1((a,x))=(\delta_1(a),\delta_2(a)+\tau_2(x))\quad\text{and}\quad D_2((a,x))=(\tau_1(x),0),$$
such that $D_1$ and $D_2$ are derivations on $\A\ltimes\U$ and $D_1$ is continuous.
\item[(ii)]
if $ann_{\U}\U =(0)$, $\delta_1(\A)\subseteq ann_{\A}\U$ and $\tau_1, \tau_2$ are continuous, then \[D_1((a,x))=(\tau_1(x),\delta_2 (a)+\tau_2 (x))\] is a continuous derivation on $\A\ltimes \U$ and $D=D_1 +D_2$ where $D_2((a,x))=(\delta_1(a),0)$ is a derivation on $\A\ltimes \U$.
\end{enumerate}
\end{prop}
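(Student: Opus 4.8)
The plan is to treat both parts in the same spirit: the decomposition $D=D_1+D_2$ is a purely formal consequence of the definitions, since adding the two displayed formulas recovers $(\delta_1(a)+\tau_1(x),\delta_2(a)+\tau_2(x))$ in each case. Hence the real content is to check that each summand is itself a derivation on $\A\ltimes\U$ and then to verify that $D_1$ is continuous. For the derivation checks I would simply match the component maps of $D_1$ and $D_2$ against the defining relations of Theorem \ref{asll} (or invoke the relevant part of Corollary \ref{tak} for the single-component pieces), keeping careful track of which terms are annihilated by the hypotheses.

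For part (i): the map $D_2((a,x))=(\tau_1(x),0)$ is a derivation by Corollary \ref{tak}(iii), because $\tau_1(xy)=0$ holds by Theorem \ref{asll}(c) and the extra relation $x\tau_1(y)+\tau_1(x)y=0$ is precisely the hypothesis. For $D_1((a,x))=(\delta_1(a),\delta_2(a)+\tau_2(x))$ the component maps are $\delta_1,\delta_2,\tau_2$ together with a vanishing first-component transfer; the first two $\tau_2$-relations of Theorem \ref{asll}(d) are unchanged, while the third collapses to $\tau_2(xy)=x\tau_2(y)+\tau_2(x)y$ exactly because the hypothesis cancels the cross term $x\tau_1(y)+\tau_1(x)y$. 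Thus $D_1$ meets the requirements of Theorem \ref{asll}. Its continuity reduces to continuity of $\delta_2$, since $\delta_1$ and $\tau_2$ are continuous and the remaining component is zero; this is delivered by Proposition \ref{au1}(ii), whose hypotheses $ann_{\U}\U=(0)$ and continuity of $\tau_2,\delta_1$ are in force.

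For part (ii): the map $D_2((a,x))=(\delta_1(a),0)$ is a derivation by Corollary \ref{tak}(i), since $\delta_1$ is a derivation and $\delta_1(\A)\subseteq ann_{\A}\U$. For $D_1((a,x))=(\tau_1(x),\delta_2(a)+\tau_2(x))$ the component maps are $\tau_1,\delta_2,\tau_2$ with a vanishing $\delta_1$-part; the key observation is that $\delta_1(\A)\subseteq ann_{\A}\U$ forces $\delta_1(a)x=x\delta_1(a)=0$, so the first two $\tau_2$-relations of Theorem \ref{asll}(d) reduce to $\tau_2(ax)=a\tau_2(x)+\delta_2(a)x$ and $\tau_2(xa)=\tau_2(x)a+x\delta_2(a)$, which are exactly the relations needed with vanishing $\delta_1$, while the third relation is untouched. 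Hence $D_1$ is a derivation. For continuity, $\tau_1$ and $\tau_2$ are assumed continuous, so only $\delta_2$ remains; here I cannot appeal to Proposition \ref{au1}(ii) directly, as $\delta_1$ is not assumed continuous, so instead I would invoke Theorem \ref{joda}(iv): since $\delta_1(\A)\subseteq ann_{\A}\U$ and $\tau_2$ is continuous, $\C(\delta_2)\subseteq(\C(\tau_2):\U)_\U=((0):\U)_\U=ann_{\U}\U=(0)$, giving continuity of $\delta_2$.

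The only genuinely non-formal step, and the place where I expect to spend care, is establishing the continuity of $\delta_2$; everywhere else the argument is bookkeeping against Theorem \ref{asll}. In particular, the subtlety that distinguishes the two parts is precisely which earlier result supplies this continuity: Proposition \ref{au1}(ii) in part (i), where $\delta_1$ is continuous, versus Theorem \ref{joda}(iv) in part (ii), where $\delta_1$ is merely annihilating on $\U$ yet this still suffices to force $\C(\delta_2)=(0)$.
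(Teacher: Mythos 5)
Your proposal is correct and follows essentially the same route as the paper: the paper likewise gets the derivation property of $D_1$ and $D_2$ from Corollary \ref{tak}, obtains continuity of $\delta_2$ in part (i) from Proposition \ref{au1}(ii), and in part (ii) from Theorem \ref{joda}(iv) via $\C(\delta_2)\subseteq(\C(\tau_2):\U)_\U=ann_{\U}\U=(0)$. The only cosmetic difference is that you verify the derivation conditions of Theorem \ref{asll} componentwise where the paper simply cites Corollary \ref{tak}.
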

\begin{proof}
$(i)$ By Proposition \ref{au1}-$(ii)$, $\delta_2$ is continuous. By Corollary \ref{tak}, $D=D_1 +D_2$ where 
$$D_1((a,x))=(\delta_1(a),\delta_2(a)+\tau_2(x)) \quad \text{and} \quad D_2((a,x))=(\tau_1(x),0)\quad\quad ((a,x)\in \A\ltimes\U)$$
are derivations on $\A\ltimes \U$. By the assumptions and that $\delta_2$ is continuous, it follows that $D_1$ is a continuous derivation.
\\
$(ii)$ By Corollary \ref{tak} 
$$D_1((a,x))=(\tau_1(x),\delta_2 (a)+\tau_2 (x))\quad\text{and}\quad D_2((a,x))=(\delta_1(a),0)$$
are derivations on $\A\ltimes \U$. Now by the continuity of $\tau_2$, $ann_{\U}\U =(0)$ and that $\delta_1(\A)\subseteq ann_{\A}\U$,  from Theorem \ref{joda}-$(iv)$, it follows that 
$$\C (\delta_2)\subseteq (\C (\tau_2):\U)_\U =((0):\U)_\U =ann_{\U}\U=(0).$$ 
Therefore $\delta_2$ is continuous and hence so is $D_1$.
\end{proof}
To prove the next proposition we need the following lemma.
\begin{lem}(\cite [Proposition 5.2.2]{da}).\label{da}
Let $\mathcal{X}$, $\mathcal{Y}$ and $\mathcal{Z}$ be Banach spaces, and let $T:\mathcal{X}\rightarrow \mathcal{Y}$ be linear.
\begin{enumerate}
\item[(i)] 
Suppose that $R:\mathcal{Z}\rightarrow \mathcal{X}$ is a continuous surjective linear map. Then $ \C (TR)= \C (T). $
\item[(ii)]
 Suppose that $S:\mathcal{Y}\rightarrow \mathcal{Z}$ is a continuous linear map. Then $ST$ is continuous if and only if $S(\C (T))=(0)$.
\end{enumerate}
\end{lem}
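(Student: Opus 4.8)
The plan is to prove the two parts separately, disposing of the easy inclusions by direct manipulation of the defining sequences and isolating the one genuinely analytic inclusion. Throughout I use the defining property of the separating space together with the closed graph characterization $\C(T)=(0)\iff T$ continuous, both recorded in the introduction.

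For part (i) I would establish two inclusions. The inclusion $\C(TR)\subseteq\C(T)$ is immediate: if $w\in\C(TR)$, choose $z_n\to 0$ in $\mathcal{Z}$ with $TR(z_n)\to w$; since $R$ is continuous, $R(z_n)\to 0$ in $\mathcal{X}$ and $T(R(z_n))\to w$, so $w\in\C(T)$. For the reverse inclusion $\C(T)\subseteq\C(TR)$ I would use surjectivity of $R$ via the open mapping theorem: as $R$ is a continuous surjection between Banach spaces it is open, so there is $M>0$ such that every $x\in\mathcal{X}$ has a preimage $z\in\mathcal{Z}$ with $R(z)=x$ and $\|z\|\le M\|x\|$. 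Given $y\in\C(T)$ witnessed by $x_n\to 0$, $T(x_n)\to y$, I pick such preimages $z_n$ of $x_n$; then $\|z_n\|\le M\|x_n\|\to 0$ and $TR(z_n)=T(x_n)\to y$, whence $y\in\C(TR)$. This is the only place surjectivity enters.

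For part (ii) the forward implication is elementary. If $ST$ is continuous then $\C(ST)=(0)$; given $y\in\C(T)$ with $x_n\to 0$ and $T(x_n)\to y$, continuity of $S$ gives $ST(x_n)\to S(y)$, while continuity of $ST$ gives $ST(x_n)\to 0$, so $S(y)=0$ and hence $S(\C(T))=(0)$.

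The converse is the substantive direction and the main obstacle. The clean way to organise it is through the identity $\C(ST)=\overline{S(\C(T))}$, valid whenever $S$ is continuous; granting this, the hypothesis $S(\C(T))=(0)$ forces $\C(ST)=(0)$, so $ST$ is continuous by the closed graph theorem. One inclusion is free: for $y\in\C(T)$ with witness $x_n$ one has $ST(x_n)\to S(y)$, so $S(y)\in\C(ST)$, and since $\C(ST)$ is closed, $\overline{S(\C(T))}\subseteq\C(ST)$. The hard inclusion $\C(ST)\subseteq\overline{S(\C(T))}$ is where completeness of $\mathcal{Y}$ is essential: a sequence $x_n\to 0$ with $ST(x_n)\to z$ gives no control over $T(x_n)$, which need not converge, so one cannot directly realise $z$ as the image under $S$ of an element of $\C(T)$. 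I expect this to be the crux. It is overcome by the stability lemma for separating spaces (a Baire category argument showing that a certain decreasing chain of closed subspaces built from iterated compositions stabilises), which yields $\C(ST)\subseteq\overline{S(\C(T))}$ for continuous $S$; equivalently, the map obtained by composing $T$ with the quotient of $\mathcal{Y}$ by $\C(T)$ is continuous. I would invoke this result, which is exactly the content pinned down in the cited source, at this step rather than reproduce the Baire argument from scratch.
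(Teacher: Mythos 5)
The paper offers no proof of this lemma at all --- it is quoted directly from Dales' book --- so there is no internal argument to compare against; your proposal is the only proof on the table. Part (i) and the forward half of part (ii) are correct and complete: the open-mapping-theorem selection of preimages $z_n$ with $\Vert z_n\Vert\le M\Vert x_n\Vert$ is exactly what surjectivity buys you, and the forward implication in (ii) is the routine sequence chase you give. The one point to correct is your description of the remaining step. The inclusion $\C (ST)\subseteq\overline{S(\C (T))}$ --- equivalently, the continuity of $QT$ where $Q:\mathcal{Y}\rightarrow\mathcal{Y}/\C (T)$ is the quotient map --- does not rest on the stability lemma or on any Baire-category argument about iterated compositions; it follows from the closed graph theorem plus a diagonal extraction. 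Indeed, suppose $x_n\rightarrow 0$ and $QT(x_n)\rightarrow Q(w)$. Choose $u_n\in\C (T)$ with $\Vert T(x_n)-w-u_n\Vert\rightarrow 0$, and for each $n$ choose, from a sequence witnessing $u_n\in\C (T)$, an element $v_n\in\mathcal{X}$ with $\Vert v_n\Vert<1/n$ and $\Vert T(v_n)-u_n\Vert<1/n$; then $x_n-v_n\rightarrow 0$ and $T(x_n-v_n)\rightarrow w$, so $w\in\C (T)$ and $QT(x_n)\rightarrow 0$. Since $\C (T)$ is closed, $\mathcal{Y}/\C (T)$ is a Banach space and the closed graph theorem gives continuity of $QT$. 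With that in hand the converse of (ii) finishes without even needing the full identity $\C (ST)=\overline{S(\C (T))}$: if $S(\C (T))=(0)$, the continuous map $S$ factors as $\tilde{S}Q$ with $\tilde{S}$ continuous, so $ST=\tilde{S}(QT)$ is continuous. The stability lemma you allude to is a genuinely deeper fact about chains $TR_1\cdots R_n$ used elsewhere in automatic continuity theory; it is not what this lemma turns on, and you should not leave the reader with the impression that the crux is harder than a closed-graph argument.
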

\begin{prop}\label{taj2}
Let $D$ be a derivation on $\A\ltimes \U$ such that 
$$D((a,x))=(\delta_1 (a)+\tau_1 (x),\delta_2 (a)+\tau_2 (x))\quad\quad (a\in \A,x\in \U)$$
and $\delta_2(\A)\subseteq ann_{\U}\U$. Then under any of the following conditions, \[D_1((a,x))=(\delta_1(a)+\tau_1(x),\tau_2(x))\]
is a continuous derivation on  $\A\ltimes \U$ and $D=D_1+D_2$ where $D_2((a,x))=(0,\delta_2(a))$ is a derivation on  $\A\ltimes \U$.
\begin{enumerate}
\item[(i)]
$ann_{\A}\U =(0)$ and $\tau_2$ is continuous.
\item[(ii)]
$\A$ possesses a bounded right approximate identity, there is a surjective left $\A$-module homomorphism  
$\phi :\A\rightarrow \U$ and both $\delta_1$ and $\tau_1$ are continuous.
\item[(iii)]
 $\A$ possesses a bounded right approximate identity, there is an injective left $\A$-module homomorphism  
$\phi :\A\rightarrow \U$ and both $\tau_1$ and $\tau_2$ are continuous.
\end{enumerate}
\end{prop}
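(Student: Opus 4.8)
The plan is to separate the purely algebraic decomposition, which is identical in all three cases, from the continuity statement, where the three hypotheses differ. First I would note, via Theorem~\ref{asll}, that $\delta_2$ is a derivation; together with the standing assumption $\delta_2(\A)\subseteq ann_{\U}\U$, Corollary~\ref{tak}(ii) shows that $D_2((a,x))=(0,\delta_2(a))$ is a derivation, whence $D_1=D-D_2$ is a derivation of the asserted form and $D=D_1+D_2$. Since a derivation on $\A\ltimes\U$ is continuous exactly when all its component maps are, in each case it remains only to prove that $\delta_1$, $\tau_1$ and $\tau_2$ are continuous. I would also record once the simplification forced by $\delta_2(\A)\subseteq ann_{\U}\U$: for all $a\in\A$ and $x\in\U$ we have $\delta_2(a)x=x\delta_2(a)=0$, so the identities of Theorem~\ref{asll}(d) reduce to $\tau_2(ax)=a\tau_2(x)+\delta_1(a)x$ and $\tau_2(xa)=\tau_2(x)a+x\delta_1(a)$. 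This is precisely what activates the $\delta_2(\A)\subseteq ann_{\U}\U$ branches of Theorem~\ref{joda}.

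Case (i) I expect to be immediate. With $\tau_2$ continuous, $\C(\tau_2)=(0)$; Theorem~\ref{joda}(i) gives $\C(\tau_1)\subseteq ann_{\A}\U=(0)$, so $\tau_1$ is continuous, and Theorem~\ref{joda}(iii) gives $\C(\delta_1)\subseteq(\C(\tau_2):\U)_{\A}=((0):\U)_{\A}=ann_{\A}\U=(0)$, so $\delta_1$ is continuous and hence $D_1$ is continuous.

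Case (iii) is nearly as clean, the new ingredient being how injectivity of $\phi$ and the right approximate identity cooperate. Here $\tau_1,\tau_2$ are continuous, so only $\delta_1$ is in question, and Theorem~\ref{joda}(iii) again yields $\C(\delta_1)\subseteq(\C(\tau_2):\U)_{\A}=ann_{\A}\U$, which this time need not be $(0)$. To finish I would take $a\in\C(\delta_1)\subseteq ann_{\A}\U$; then $a\phi(b)=0$ for every $b\in\A$, and since $\phi$ is a left module homomorphism this reads $\phi(ab)=0$, so injectivity forces $ab=0$ for all $b$. Applying the bounded right approximate identity $(e_\alpha)$ then gives $a=\lim_\alpha ae_\alpha=0$, so $\C(\delta_1)=(0)$ and $\delta_1$ is continuous.

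Case (ii) is the real obstacle, since now only $\tau_2$ must be shown continuous and none of the annihilator hypotheses of (i) are available. My plan is to transport the problem from $\U$, where the approximate identity does not act conveniently, back to $\A$, where it does. Using that $\phi$ is continuous and surjective, Lemma~\ref{da}(i) gives $\C(\tau_2)=\C(\tau_2\phi)$; writing $\psi:=\tau_2\phi:\A\to\U$ and combining the left module property of $\phi$ with the collapsed identity above, one computes the cocycle relation $\psi(cd)=c\psi(d)+\delta_1(c)\phi(d)$, the term $\delta_2(c)\phi(d)$ dropping out exactly because $\delta_2(\A)\subseteq ann_{\U}\U$. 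The decisive step is then Cohen's factorization theorem in its null-sequence form: as $\A$ has a bounded right approximate identity, any $b_n\to 0$ in $\A$ factors as $b_n=y_na$ with $a\in\A$ fixed and $y_n\to 0$. Substituting into the cocycle relation gives $\psi(b_n)=y_n\psi(a)+\delta_1(y_n)\phi(a)\to 0$ by continuity of the module action and of $\delta_1$, so $\C(\psi)=(0)$ and hence $\C(\tau_2)=(0)$. The step I expect to require the most care is this coordination of three facts: Lemma~\ref{da}(i) to move the separating space onto $\A$, the left-module-homomorphism identity for $\phi$ to turn the right-hand factorization $b_n=y_na$ into a usable expression, and $\delta_2(\A)\subseteq ann_{\U}\U$ to keep the inhomogeneous term of the cocycle continuous.
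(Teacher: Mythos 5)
Your decomposition and your case (i) coincide with the paper's proof. For (ii) and (iii) you take a partly different route, and both variants are sound, with one small omission. The paper first proves, via the null-sequence form of Cohen's factorization theorem, that \emph{every} left $\A$-module homomorphism from $\A$ to $\U$ is automatically continuous, and then in both (ii) and (iii) works with the single auxiliary map $\psi=\tau_2\circ\phi-\phi\circ\delta_1$, which the hypothesis $\delta_2(\A)\subseteq ann_{\U}\U$ turns into a left $\A$-module homomorphism: in (ii) continuity of $\psi$, $\phi$ and $\delta_1$ gives continuity of $\tau_2\circ\phi$, and Lemma \ref{da}(i) transfers this to $\tau_2$; in (iii) continuity of $\psi$, $\tau_2$ and $\phi$ gives continuity of $\phi\circ\delta_1$, and Lemma \ref{da}(ii) together with injectivity kills $\C(\delta_1)$. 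You instead apply Cohen factorization directly to the cocycle identity for $\tau_2\circ\phi$ in (ii) --- essentially the same computation that shows $\psi$ is a module homomorphism, just not packaged as one --- and in (iii) you avoid $\psi$ and Lemma \ref{da} altogether by combining Theorem \ref{joda}(iii) (which gives $\C(\delta_1)\subseteq ann_{\A}\U$ once $\C(\tau_2)=(0)$) with injectivity of $\phi$ and the bounded right approximate identity; that argument is correct and arguably more elementary than the paper's. The one point you must still supply in case (ii): Lemma \ref{da}(i) requires $\phi$ to be \emph{continuous}, and the proposition does not assume this. It follows from the same null-sequence factorization you already use ($\phi(b_n)=\phi(y_na)=y_n\phi(a)\to 0$), exactly as the paper records before treating (ii) and (iii), so add that line before invoking the lemma.
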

\begin{proof}
 From Corollary \ref{tak}-$(ii)$,
\[D_1((a,x))=(\delta_1(a)+\tau_1(x),\tau_2(x)) \quad \text{and} \quad
D_2((a,x))=(0,\delta_2(a))\quad\quad ((a,x))\in \A\ltimes \U) \]
are derivations on $\A\ltimes \U$.\\
$(i)$ Since $ann_{\A}\U =(0)$, as in Proposition \ref{au1}-$(i)$, it can be proved that $\tau_1$ is continuous. By continuity of $\tau_2$, $ann_{\A}\U =(0)$, $\delta_2(\A)\subseteq ann_{\U}\U$ and Theorem \ref{joda}-$(iii)$, we have 
$$\C (\delta_1)\subseteq (\C (\tau_2):\U)_{\A}=((0),\U)_{\A}=ann_{\A}\U =(0).$$
Therefore $\delta_1$ is continuous and so is $D_1$.\par 
Before proving parts $(ii)$ and $(iii)$ we show that if $\psi :\A\rightarrow \U$ is any left $\A$-module homomorphism, then it is continuous. Let $\{a_k\}$ be a sequence in $\A$ such that $a_k\rightarrow 0$. By the Cohen's factorization theorem there is some $c\in \A$ and some sequence $b_k$ in $\A$ for which $b_k\rightarrow 0$ and $a_k=b_k c \, \, (k\in \mathbb{N})$. Thus $\psi (a_k)=b_k\psi (c)\rightarrow 0$ and hence $\psi$ is continuous.
\\
$(ii)$ Let $\psi :\A\rightarrow \U$ be the map $\psi = \tau_2 o\phi - \phi o\delta_1$. By the condition $\delta_2(\A)\subseteq ann_{\U}\U$ and the properties of $\delta_1$ and $\tau_2$ it can be shown that $\psi$ is a left $\A$-module homomorphism. Therefore $\psi$ and $\phi$ are continuous and hence $\tau_2 o\phi$ is continuous. So $\C (\tau_2 o\phi)=(0)$. On the other hand by Lemma \ref{da}-$(i)$, since $\phi$ is surjective, $\C (\tau_2 o\phi)=\C (\tau_2)$. Thus $\C (\tau_2)=0$. Therefore $\tau_2$ is continuous. So by the hypothesis $D_1$ is continuous.
\\
$(iii)$
As in the previous part we put $\psi = \tau_2 o\phi - \phi o\delta_1$ which is a left $\A$-module homomorphism. Since $\tau_2$ and $\phi$ are continuous, it follows that $\phi o\delta_1 $ is continuous as well. By Lemma \ref{da}-$(ii)$ we have $\phi(\C (\delta_1))=(0)$. Since $\phi$ is injective, it follows that $\C (\delta_1)=(0)$. So $\delta_1$ is continuous and by the hypothesis $D_1$ is continuous as well.
\end{proof}
Note that Propositions \ref{taj2} also holds if "bounded right approximate identity" and "left $\A$-module homomorphism" are replaced respectively by "bounded left approximate identity" and "right $\A$-module homomorphism".
\begin{rem}\label{taj11}
In Propositions \ref{taj1} and \ref{taj2} if we assume that $ann_{\A}\U =(0)$, then as in Proposition \ref{au1}-$(i)$, the continuity of $\tau_2$ implies the continuity of $\tau_1$ or if we assume that $\U$ has a bounded approximate identity, we conclude that $\tau_1 =0$. Therefore any of the conditions $ann_{\A}\U =(0)$ or $\U$ to have a bounded approximate identity implies that $\tau_1$ is continuous and therefore so $D_1$ is continuous.
\end{rem}
\par 
In the following example we show that the derivation $D_2$ in Propositions \ref{taj1} and \ref{taj2}, can be discontinuous.
\begin{exm}
Let $\mathcal{B}$ be a Banach algebra and $d:\mathcal{B}\rightarrow \mathcal{B}$ be a discontinuous derivation.
\begin{enumerate}
\item[(i)]
Let $\A:=\mathcal{B}\times \mathcal{B}$ be the direct product of Banach algebras and let $\U :=\mathcal{B}$ becomes a Banach $\A$-bimodule with compatible actions, by the following module actions;
\[ 
(a,b)x=ax \quad  \text{and} \quad 
x(a,b)= xa\quad\quad (a,b\in \A , x\in \U).
\]
Therefore $(0)\times \mathcal{B}\subseteq ann_{\A}\U$.
Define the map $\delta_1: \A\rightarrow \A$ by $\delta_1((a,b))=(0,d(b))$. Then $\delta_1$ is a discontinuous derivation on $\A$ such that 
$$\delta_1 (\A)\subseteq (0)\times \mathcal{B}\subseteq ann_{\A}\U. $$
So the map $D_2: T(\A,\U)\rightarrow T(\A,\U)$ defined by $D_2((a,b),x)=(\delta_1((a,b)),0)$ is discontinuous.
\item[(ii)]
Assume that $\U :=\mathcal{B}\times \mathcal{B}$ as the direct product of Banach spaces which becomes a Banach algebra with thr product 
$$(x,y)(x',y')=(xx',0)\quad \quad ((x,y),(x',y')\in \U).$$
Let $\A:=\mathcal{B}$ and $\U$ is became a Banach $\A$-bimodule with the pointwise module actions which are compatible  with its algebraic operations.\par 
Now consider the map $\delta_2: \A\rightarrow \U$ defined by $\delta_2(a)=(0,d(a))$. $\delta_2$ is a discontinuous derivation such that 
$$\delta_2(\A)\subseteq (0)\times \mathcal{B}\subseteq ann_{\U}\U.$$
Hence the map $D_2: \A \ltimes \U \rightarrow \A \ltimes \U$ defined by 
$$D_2((a,(x,y)))=(0,\delta_2(a))$$
is a discontinuous derivation on $\A\ltimes \U$.
\item[(iii)]
Suppose that $\U$ is a Banach space and $T:\U\rightarrow \mathbb{C}$ is a discontinuous linear functional. 
Set $\A:=T(\mathbb{C},\mathbb{C})$ and we turn $\U$ into a Banach $\U$-bimodule by the actions below;
\[(a,b)x=ax \quad \text{and} \quad x(a,b)=xb  \quad\quad ((a,b)\in A, x\in \U).\]
Consider the Banach algebra $T(\A,\U)$ and the map $\tau_1:\U\rightarrow \A$ defined by 
$$\tau_1(x)=(0,T(x)).$$
So $\tau_1$ is an $\A$-bimodule homomorphism such that 
$$x\tau_1(x')+\tau_1 (x)x'=0\quad\quad (x,x'\in \U).$$
Thus the map $D_2((a,b),x)=(\tau_1(x),0)$ is a derivation on $T(\A,\U)$ which is discontinuous.
\end{enumerate}
\end{exm}
Note that in Proposition \ref{taj2} if we assume that $\delta_2$ is continuous, then derivation $D$ is continuous as well.
%*************************************************************************************

%**************************************************************************************
\section{The first cohomology group of $\A\ltimes\U$}
In this section we determine the first cohomology group of $\A\ltimes\U$ in some special cases. Throughout this section  for two linear spaces $\mathcal{X}$ and $\mathcal{Y}$, we shall write $ \mathcal{X} \cong \mathcal{Y}$ to indicate that the spaces are linearly isomorphic. 
\par 
From this point up to the last section we assume that every derivation $D$ on $\A\ltimes\U$ is of the form 
\[D(a,x)=(\delta_1 (a),\delta_2 (a)+\tau_2 (x))\quad\quad (a\in \A,x\in \U)\]
in which $\delta_1:\A\rightarrow\A$, $\delta_2:\A\rightarrow\U$ and $\tau_2:\U\rightarrow\U$
are derivations such that 
\[
\tau_2 (ax)=a\tau_2 (x)+\delta_1 (a)x+\delta_2 (a)x \,\, \text{and} \,\, 
\tau_2 (xa)=\tau_2 (x)a+x\delta_1 (a)+x\delta_2 (a) \quad\quad (a\in \A,x\in \U).
\]
Indeed, we consider that for every derivation $D$ on $\A\ltimes\U$ we have $\tau_1=0$, where $\tau_1$ is as in Theorem \ref{asll}. 
\par 
If for all $\delta\in Z^{1}(\A)$, $\delta_1(\A)\subseteq ann_{\A}\U$, then by Corollary \ref{tak}, the map $D$ given by $D((a,x))=(\delta(a),0)$ is a continuous derivation on $\A\ltimes\U$ and for every continuous derivation $D$ on $\A\ltimes\U$ the maps $D_1((a,x))=(0,\delta_2 (a)+\tau_2 (x))$ and $D_2((a,x))=(\delta_1(a),0)$ are derivations in $Z^{1}(\A\ltimes\U)$ and $D=D_1+D_2$. By these arguments, in this case,
\[Z^{1}(\A\ltimes\U)\cong \mathcal{W} \times Z^{1}(\A),\]
where $\mathcal{W}$ is a linear subspace consisting of all continuous derivations on $\A\ltimes\U$ of the form 
$D(a,x)=(0,\delta_2 (a)+\tau_2 (x))$, such that $\delta_2\in Z^{1}(\A,\U)$, $\tau_2\in Z^{1}(\U)$ and 
\[\tau_2 (ax)=a\tau_2 (x)+\delta_2 (a)x \,\, \text{and} \,\,\tau_2 (xa)=\tau_2 (x)a+x\delta_2 (a)\quad\quad (a\in\A,x\in\U).\]
\begin{thm}\label{11}
Let for every derivation $\delta\in Z^{1}(\A)$ we have $\delta(\A)\subseteq ann_{\A}\U$. If $H^{1}(\A,\U)=(0)$, then \[H^{1}(\A\ltimes\U)\cong \frac{Z^{1}(\A)\times [Hom_{\A}(\U)\cap Z^{1}(\U)]}{\mathcal{E}},\]
where $\mathcal{E}$ is the following linear subspace of $Z^{1}(\A)\times [Hom_{\A}(\U)\cap Z^{1}(\U)]$;

\[\mathcal{E}=\{(id_{a},r_{a}+id_{\U,x})\, \mid \, a\in\A, \, id_{\A , x}=0 \, \, (x\in\U)\}.\]
\end{thm}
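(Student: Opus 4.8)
The plan is to realize $H^{1}(\A\ltimes\U)$ as the image of an explicit linear map and then read off its kernel. Building on the description of $Z^{1}(\A\ltimes\U)$ recorded just before the statement, I would define
\[\Phi:Z^{1}(\A)\times[Hom_{\A}(\U)\cap Z^{1}(\U)]\longrightarrow H^{1}(\A\ltimes\U),\qquad \Phi(\delta_1,\sigma):=[D_{(\delta_1,\sigma)}],\]
where $D_{(\delta_1,\sigma)}((a,x)):=(\delta_1(a),\sigma(x))$. The first step is to check that $D_{(\delta_1,\sigma)}$ is a continuous derivation. Feeding $\tau_1=0$, $\delta_2=0$ and $\tau_2=\sigma$ into the conditions of Theorem \ref{asll}, the three compatibility identities collapse to $\sigma(ax)=a\sigma(x)$, $\sigma(xa)=\sigma(x)a$ and the Leibniz rule for $\sigma$, precisely because the cross terms $\delta_1(a)x$ and $x\delta_1(a)$ vanish, which is guaranteed by the standing hypothesis $\delta_1(\A)\subseteq ann_{\A}\U$. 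Since $\delta_1$ and $\sigma$ are continuous, so is $D_{(\delta_1,\sigma)}$, and $\Phi$ is a well-defined linear map.

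Next I would establish surjectivity. Let $D((a,x))=(\delta_1(a),\delta_2(a)+\tau_2(x))$ be an arbitrary continuous derivation; continuity of $D$ forces $\delta_1,\delta_2,\tau_2$ to be continuous, so $\delta_2\in Z^{1}(\A,\U)$, and the hypothesis $H^{1}(\A,\U)=(0)$ provides $x_0\in\U$ with $\delta_2=id_{\A,x_0}$. Set $\sigma:=\tau_2-id_{\U,x_0}$. The crucial verification is that $\sigma\in Hom_{\A}(\U)\cap Z^{1}(\U)$: it is a continuous derivation of $\U$ as a difference of such, and combining the compatibility identity for $\tau_2$ (in which $\delta_1(a)x=0$) with Remark \ref{inn2}(i) gives $\sigma(ax)=a\sigma(x)$ together with its right-sided analogue, so $\sigma$ is an $\A$-bimodule homomorphism. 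A short computation of the inner derivation implemented by $(0,x_0)$ yields $ad_{(0,x_0)}((a,x))=(0,id_{\A,x_0}(a)+id_{\U,x_0}(x))$, whence $D-ad_{(0,x_0)}=D_{(\delta_1,\sigma)}$ and therefore $[D]=\Phi(\delta_1,\sigma)$.

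Finally I would compute the kernel. By construction $\Phi(\delta_1,\sigma)=0$ if and only if $D_{(\delta_1,\sigma)}$ is inner, so I would apply the inner-ness criterion of Theorem \ref{asll} to $D_{(\delta_1,\sigma)}$, whose $\delta_2$-component is $0$. Reading that criterion as the requirement that a single pair $(a,x)\in\A\times\U$ realize $\delta_1=id_{a}$, $\delta_2=id_{\A,x}=0$ and $\sigma=r_{a}+id_{\U,x}$ simultaneously, one sees that $D_{(\delta_1,\sigma)}$ is inner exactly when $(\delta_1,\sigma)\in\mathcal{E}$; hence $\ker\Phi=\mathcal{E}$, and the first isomorphism theorem for linear maps delivers the stated isomorphism. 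The most delicate step is the module-homomorphism property of $\sigma=\tau_2-id_{\U,x_0}$, as it is exactly where the two hypotheses interact: one needs both that $\delta_2$ be absorbed as $id_{\A,x_0}$ and that $\delta_1(\A)$ annihilate $\U$ so that the $\delta_1$-cross terms in the $\tau_2$-identity drop out. A secondary point requiring attention is the kernel match, where one must insist that the same pair $(a,x)$ witness all three inner-ness conditions, which is precisely how $\mathcal{E}$ is written.
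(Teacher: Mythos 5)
Your proposal is correct and follows essentially the same route as the paper's proof: the same map $\Phi(\delta_1,\sigma)=[D_{(\delta_1,\sigma)}]$, the same use of $H^{1}(\A,\U)=(0)$ to absorb $\delta_2$ as $id_{\A,x_0}$ and replace $\tau_2$ by $\sigma=\tau_2-id_{\U,x_0}$ via Remark \ref{inn2}, and the same identification of $\ker\Phi$ with $\mathcal{E}$ through the inner-ness criterion of Theorem \ref{asll}. No substantive differences.
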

\begin{proof}
Define the map \[\Phi:Z^{1}(\A)\times [Hom_{\A}(\U)\cap Z^{1}(\U)]\rightarrow H^{1}(\A\ltimes\U)\]
by $\Phi ((\delta,\tau))=[D_{\delta,\tau}]$
where $D_{\delta,\tau} ((a,x))=(\delta (a),\tau (x))$ and $[D_{\delta,\tau}]$ represents the equivalence class of $D_{\delta,\tau}$ in $H^{1}(\A\ltimes\U).$ By Corollary \ref{tak} the maps $D_1:\A\ltimes\U\rightarrow \A\ltimes\U$ and $D_2:\A\ltimes\U\rightarrow\A\ltimes\U$ given respectively by $D_1((a,x))=(0,\tau (x))$ and $D_2((a,x))=(\delta(a),0)$, are continuous derivations on $\A\ltimes\U$ and so $\Phi$ is well-defined. Clearly $\Phi$ is linear. Let $D\in Z^{1}(\A\ltimes\U)$. By the hypothesis and the discussion before the theorem, $D=D_1+D_2$ where  $D_1((a,x))=(\delta_1(a),0)$ and $D_2((a,x))=(\delta_1(a),0)$ are continuous derivations on $\A\ltimes\U$ and $\delta_1\in Z^{1}(\A)$, $\delta_2\in Z^{1}(\A,\U)$ and $\tau_2\in Z^{1}(\U)$. Since $H^{1}(\A,\U)=(0)$, there is some $x_0\in \U$ such that $\delta_2 =id_{\A,x_0}$. Define the map $\tau:\U\rightarrow \U $ by $\tau=\tau_2 - id_{\U,x_0}$. By the properties of $\tau$ and Remark \ref{inn2}, it follows that $\tau\in Hom_{\A}(\U)\cap Z^{1}(\U)$. Now we have
\[
D((a,x))-D_{\delta_1 ,\tau} ((a,x))=(0,id_{\A,x_0}(a)+id_{\U,x_0}(x))= id_{(0,x_0)}(a,x) \quad \quad ((a,x)\in\A\ltimes\U).\]
So $[D]=[D_{\delta,\tau}]$ and hence $\Phi ((\delta_1,\tau))=[D_{\delta_1 ,\tau}]=[D]$. Thus $\Phi$ is surjective.
\par
It can be easily checked that $\mathcal{E}$ is a linear subspace and if $(\delta,\tau)\in ker \Phi$, then $D_{\delta,\tau}$ is an inner derivation on $\A\ltimes\U$. So for some $(a_0,x_0)\in \A\ltimes\U$, 
\[D_{\delta,\tau}((a,x))=id_{a_0,x_0}(a,x)=(id_{a_0}(a),id_{\A,x_0}(a)+r_{a_0}(x)+id_{\U,x_0}(x)),\]
which implies that \[\delta =id_{a_0}, \quad \tau (x)=r_{a_0}(x)+id_{\U,x_0}(x)\quad \text {and} \quad id_{\A,x_0}=0 \quad \quad (a\in\A ,x\in\U).\] Hence $(\delta,\tau)\in \mathcal{E}$. Conversely, Suppose that $(\delta,\tau)\in \mathcal{E}$. Then for $a_0\in\A$ and $x_0\in\U$ we have $\delta =id_{a_0}$ and $ \tau =r_{a_0}+id_{\U,x_0}$ where $id_{\A,x_0}=0$ for all $a\in\A$. So 
\[
D_{\delta,\tau}((a,x))=(id_{a_0}(a),r_{a_0}(x)+id_{\U ,x_0}(x))
= id_{(a_0,x_0)}((a,x)).\]
Thus $D_{\delta,\tau}\in N^{1}(\A\ltimes\U)$ and hence $(\delta,\tau)\in ker\Phi$. So $\mathcal{E}=ker\Phi$. Therefore by the above arguments we have 
\[H^{1}(\A\ltimes\U)\cong \frac{Z^{1}(\A)\times [Hom_{\A}(\U)\cap Z^{1}(\U)]}{\mathcal{E}}.\]
\end{proof}
\begin{cor}
Suppose that for every $\delta\in Z^{1}(\A)$, $\delta (\A)\subseteq ann_{\A}\U$. If $H^{1}(\A,\U)=(0)$ and  $H^{1}(\A\ltimes\U)=(0)$, then $H^{1}(\A)=(0)$ and $\frac{Hom_{\A}(\U)\cap Z^{1}(\U)}{C_{\A}(\U)+I(\U)}=(0).$
\end{cor}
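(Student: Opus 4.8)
The plan is to read off both conclusions directly from the linear isomorphism supplied by Theorem \ref{11}. The two standing hypotheses of that theorem, namely that $\delta(\A)\subseteq ann_{\A}\U$ for every $\delta\in Z^{1}(\A)$ and that $H^{1}(\A,\U)=(0)$, are exactly what is assumed here, so we may invoke
\[H^{1}(\A\ltimes\U)\cong \frac{Z^{1}(\A)\times [Hom_{\A}(\U)\cap Z^{1}(\U)]}{\mathcal{E}}.\]
The extra hypothesis $H^{1}(\A\ltimes\U)=(0)$ then forces the quotient on the right to be trivial, i.e. $\mathcal{E}=Z^{1}(\A)\times [Hom_{\A}(\U)\cap Z^{1}(\U)]$. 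Unwinding the definition of $\mathcal{E}$, this says that every pair $(\delta,\tau)$ in the product space has the form $(id_{a_0},\,r_{a_0}+id_{\U,x_0})$ for some $a_0\in\A$ and some $x_0\in\U$ with $id_{\A,x_0}=0$. All that remains is to feed two well-chosen pairs into this description.

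For the first conclusion I would take an arbitrary $\delta\in Z^{1}(\A)$ and consider the pair $(\delta,0)$, which lies in $Z^{1}(\A)\times [Hom_{\A}(\U)\cap Z^{1}(\U)]$ because $0$ belongs to $Hom_{\A}(\U)\cap Z^{1}(\U)$. Membership in $\mathcal{E}$ then yields some $a_0\in\A$ with $\delta=id_{a_0}$ (together with a relation on the discarded second coordinate). Thus $\delta$ is inner; since $\delta$ was arbitrary, $Z^{1}(\A)=N^{1}(\A)$ and hence $H^{1}(\A)=(0)$.

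For the second conclusion I would take an arbitrary $\tau\in Hom_{\A}(\U)\cap Z^{1}(\U)$ and consider the pair $(0,\tau)$. Again membership in $\mathcal{E}$ provides $a_0\in\A$ and $x_0\in\U$ with $id_{\A,x_0}=0$ such that $id_{a_0}=0$ and $\tau=r_{a_0}+id_{\U,x_0}$. Because $id_{a_0}=0$ we have $r_{a_0}\in C_{\A}(\U)$ by the definition of $C_{\A}(\U)$, and because $id_{\A,x_0}=0$ we have $id_{\U,x_0}\in I(\U)$ by the definition of $I(\U)$; hence $\tau\in C_{\A}(\U)+I(\U)$. This gives the inclusion $Hom_{\A}(\U)\cap Z^{1}(\U)\subseteq C_{\A}(\U)+I(\U)$, while the reverse inclusion was already recorded in Section 2. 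Therefore $Hom_{\A}(\U)\cap Z^{1}(\U)=C_{\A}(\U)+I(\U)$ and the displayed quotient vanishes.

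Since this is a direct corollary of Theorem \ref{11}, there is essentially no analytic obstacle; the only point requiring care is bookkeeping in the definition of $\mathcal{E}$, namely keeping straight which vanishing inner-derivation condition ($id_{a_0}=0$ on $\A$ versus $id_{\A,x_0}=0$ from $\A$ into $\U$) corresponds to which summand ($C_{\A}(\U)$ versus $I(\U)$). This is settled simply by reading off the defining constraints of those two subspaces.
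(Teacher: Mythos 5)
Your proof is correct and follows essentially the same route as the paper: both invoke Theorem \ref{11}, observe that $H^{1}(\A\ltimes\U)=(0)$ forces $\mathcal{E}$ to be the whole product space, and then test the pairs $(\delta,0)$ and $(0,\tau)$ against the definition of $\mathcal{E}$. If anything, your version is slightly cleaner, since you allow different witnesses $a_0,x_0$ for the two pairs, whereas the paper's wording appears to reuse the same ones.
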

\begin{proof}
Let $\tau\in Hom_{\A}(\U)\cap Z^{1}(\U)$ and $\delta\in Z^{1}(\A)$. Since $\delta (\A)\subseteq ann_{\A}\U$, it follows that 
\[ (0,\tau)\,,\, (\delta,0)\in Z^{1}(\A)\times [Hom_{\A}(\U)\cap Z^{1}(\U)].\]
By the hypothesis and according to the preceding theorem, there exist some $a_0\in\A$ and $x_0\in\U$ such that $id_{\A,x_0}=0$ and \[(\delta,0)=(id_{a_0},r_{a_0}+id_{\U,x_0})\quad ,\quad (0,\tau)=(id_{a_0},r_{a_0}+id_{\U,x_0}).\]
Hence $\delta=id_{a_0}$ and so $H^{1}(\A)=(0)$. Moreover, $\tau=r_{a_0}+id_{\U,x_0}$ where $id_{a_0}=0$. Thus $\tau\in C_{\A}(\U)+I(\U)$.
 \end{proof}
 If we assume that $\A$ is a Banach algebra with $ann_{\A}\A=(0)$ and $\delta\in Z^{1}(\A)$ where $\delta\neq 0$, then $\delta(\A)\not\subseteq ann_{\A}\A =(0)$. So in this case, the condition  $\delta (\A)\subseteq ann_{\A}\A$ is not stisfied on $\A\ltimes\A$ for every derivation $\delta\in Z^{1}(\A)$ and this shows that this condition does not hold in general. In the following we give an example of Banach algebras which are satisfying in conditions of Theorem \ref{11}.
 \begin{exm}
Let $\A$ be a semisimple commutative Banach algebra. By Thomas' theorem \cite{tho}, we have $Z^{1}(\A)=(0)$. So in this case, for every $\delta\in Z^{1}(\A)$ and every Banach $\A$-bimodule $\U$, $\delta (\A)\subseteq ann_{\A}\U$ (in fact $\delta=0$). In particular for a semisimple commutative Banach algebra $\A$, if $\U$ is a Banach $\A$-bimodule such that $H^{1}(\A,\U)=(0)$ and $\A\ltimes\U$ is satisfying in conditions of Theorem \ref{11}, then
 \[H^{1}(\A\ltimes\U)\cong \frac{Hom_{\A}(\U)\cap Z^{1}(\U)}{R_{\A}(\U)+ I(\U)}.\]
 \end{exm}
 Note that if $\A$ is a commutative Banach algebra with $H^{1}(\A)=(0)$ and $H^{1}(\A,\U)=(0)$, then above example holds again.
 \par 
We continue by characterizing the first cohomology group of $\A\ltimes\U$ in another case.
\par 
If for each $\delta_2\in Z^{1}(\A,\U)$, $\delta_2(\A)\subseteq ann_{\U}\U$, then by Corollary \ref{tak}, the map $D ((a,x))=(0,\delta_2(a))$ is a continuous derivation on $\A\ltimes\U$ and for each derivation $D\in Z^{1}(\A\ltimes\U)$ we conclude that the maps $D_1 ((a,x))=(\delta_1 (a),\tau_2(x))$ and $D_2((a,x))=(0,\delta_2(a))$ are derivations in $Z^{1}(\A\ltimes\U)$ and $D=D_1+D_2$. So by this argument in this case,
 \[Z^{1}(\A\ltimes\U)\cong Z^{1}(\A ,\U)\times \mathcal{W}\]
 where $\mathcal{W}$ is the linear subspace of all continuous derivations on $\A\ltimes\U$ which are of the  form $D_1((a,x))=(\delta_1 (a),\tau_2(x))$ with $\delta_1\in Z^{1}(\A)\,,\tau_2\in Z^{1}(\U)$ and 
 \[\tau_2 (ax)=a\tau_2 (x)+\delta_1 (a)x\quad \text{and} \quad \tau_2 (xa)=\tau_2 (x)a+x\delta_1 (a)\quad\quad ((a,x)\in \A\ltimes\U).\]
  \begin{thm}\label{22}
 Suppose that for each derivation $\delta\in Z^{1}(\A,\U)$ we have $\delta(\A)\subseteq ann_{\U}\U$. If $H^{1}(\A)=(0)$, then 
 \[H^{1}(\A\ltimes\U)\cong \frac{Z^{1}(\A,\U)\times [Hom_{\A}(\U)\cap Z^{1}(\U)]}{\mathcal{F}},\]
 where $\mathcal{F}$ is the following linear subspace of $Z^{1}(\A,\U)\times [Hom_{\A}(\U)\cap Z^{1}(\U)]$;
 \[\mathcal{F}=\{ (id_{\A,x},r_{a}+id_{\U, x})\, \mid \, x\in\U, \, id_{a}=0 \, \, (a\in\A) \}.\]
 \end{thm}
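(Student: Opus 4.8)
The plan is to mirror the proof of Theorem \ref{11}, exploiting the fact that the present hypotheses are exactly dual to those there, with the roles of the $\A$-valued part $\delta_1$ and the $\U$-valued part $\delta_2$ interchanged. Concretely, I would define a linear map
\[\Phi:Z^{1}(\A,\U)\times[Hom_{\A}(\U)\cap Z^{1}(\U)]\rightarrow H^{1}(\A\ltimes\U),\qquad \Phi((\delta,\tau))=[D_{\delta,\tau}],\]
where $D_{\delta,\tau}((a,x))=(0,\delta(a)+\tau(x))$. First I would check that $\Phi$ is well defined: writing $D_{\delta,\tau}=D_1+D_2$ with $D_1((a,x))=(0,\delta(a))$ and $D_2((a,x))=(0,\tau(x))$, parts (ii) and (iv) of Corollary \ref{tak} show these are continuous derivations. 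Here the hypothesis $\delta(\A)\subseteq ann_{\U}\U$ is exactly what makes $D_1$ a derivation, and $\tau\in Hom_{\A}(\U)\cap Z^{1}(\U)$ is precisely what makes $D_2$ one; linearity of $\Phi$ is immediate.

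For surjectivity I would take $D\in Z^{1}(\A\ltimes\U)$ of the standard form $D((a,x))=(\delta_1(a),\delta_2(a)+\tau_2(x))$ and use the decomposition recorded before the theorem together with $\delta_2(\A)\subseteq ann_{\U}\U$. Since $H^{1}(\A)=(0)$, I would write $\delta_1=id_{a_0}$ for some $a_0\in\A$ and set $\tau:=\tau_2-r_{a_0}$. The step I expect to carry the real content is verifying that this $\tau$ lands in $Hom_{\A}(\U)\cap Z^{1}(\U)$: it is a derivation on $\U$ because both $\tau_2$ and $r_{a_0}$ are (Remark \ref{inn1}(i)), and it is an $\A$-bimodule homomorphism because the compatibility relation $\tau_2(ax)=a\tau_2(x)+\delta_1(a)x$ and the identity $r_{a_0}(ax)=a\,r_{a_0}(x)+id_{a_0}(a)x$ of Remark \ref{inn1}(ii) cancel the cross terms exactly when $\delta_1=id_{a_0}$. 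A direct computation then gives
\[D((a,x))-D_{\delta_2,\tau}((a,x))=(id_{a_0}(a),r_{a_0}(x))=id_{(a_0,0)}((a,x)),\]
so $[D]=\Phi((\delta_2,\tau))$ and $\Phi$ is onto.

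Finally I would identify $\ker\Phi$ with $\mathcal{F}$. Expanding the generic inner derivation as
\[id_{(a_0,x_0)}((a,x))=(id_{a_0}(a),\,id_{\A,x_0}(a)+r_{a_0}(x)+id_{\U,x_0}(x))\]
and comparing with $D_{\delta,\tau}((a,x))=(0,\delta(a)+\tau(x))$, the vanishing of the $\A$-component forces $id_{a_0}=0$, while setting $x=0$ and then $a=0$ in the $\U$-component yields $\delta=id_{\A,x_0}$ and $\tau=r_{a_0}+id_{\U,x_0}$; this is precisely membership in $\mathcal{F}$, and the reverse inclusion runs backwards through the same identity. The result then follows from the first isomorphism theorem for linear maps. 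The one genuinely delicate point throughout is the module-homomorphism verification for $\tau=\tau_2-r_{a_0}$; everything else is the dual transcription of Theorem \ref{11}.
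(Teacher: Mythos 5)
Your proposal is correct and follows essentially the same route as the paper: the same map $\Phi((\delta,\tau))=[D_{\delta,\tau}]$ with $D_{\delta,\tau}((a,x))=(0,\delta(a)+\tau(x))$, surjectivity via $H^{1}(\A)=(0)$ and the correction $\tau=\tau_2-r_{a_0}$ (whose membership in $Hom_{\A}(\U)\cap Z^{1}(\U)$ you rightly flag as the point needing Remark \ref{inn1} and $\delta_2(\A)\subseteq ann_{\U}\U$), and the same componentwise identification of $\ker\Phi$ with $\mathcal{F}$. The paper leaves the well-definedness and kernel computations as ``straightforward''; you have simply written them out.
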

 \begin{proof}
 Define the map 
 \[\Phi: Z^{1}(\A,\U)\times [Hom_{\A}(\U)\cap Z^{1}(\U)]\rightarrow H^{1}(\A\ltimes\U)\]
 by $\Phi ((\delta,\tau))=[D_{\delta,\tau}]$
 where $D_{\delta,\tau}((a,x))=(0,\delta(a)+\tau(x))$
 is a continuous derivation on $\A\ltimes\U$. Clearly $\Phi$ is a well-defined linear map. If 
 $D\in Z^{1}(\A\ltimes\U)$, then $D=D_1+D_2$ where $D_1((a,x))=(\delta_1 (a),\tau_2(x))$ and $D_2((a,x))=(0,\delta_2(a))$ are continuous derivations on $\A\ltimes\U$ with $\delta_1\in Z^{1}(\A), \delta_2\in Z^{1}(\A,\U)$ and $\tau\in Z^{1}(\U)$. Since $H^{1}(\A)=(0)$, there is some $a_0\in \A$ such that $\delta_1=id_{a_0}$. Consider the map $\tau=\tau_2-r_{a_0}$ on $\U$. We have  
 $\tau\in Hom_{\A}(\U)\cap Z^{1}(\U)$. Also
 \[D((a,x))-D_{\delta_2 ,\tau}((a,x))=id_{(a_0,0)}((a,x)).\]
 So $[D]=[D_{\delta_2 ,\tau}]$ and hence $\Phi ((\delta_2,\tau))=[D_{\delta_2,\tau}]=[D]$.
 Therefore $\Phi$ is surjective. A straightforward proceeding shows that $\mathcal{F}$ is a vector subspace and $ker\Phi =\mathcal{F}$. This establishes the desired vector space isomorphism.  
 \end{proof}
  \begin{cor}\label{222}
  Suppose that for any derivation $\delta\in Z^{1}(\A,\U)$, $\delta(\A)\subseteq ann_{\U}\U$. If 
  $H^{1}(\A)=(0)$ and $H^{1}(\A\ltimes\U)=(0)$, then $H^{1}(\A,\U)=(0)$ and 
  $\frac{Hom_{\A}(\U)\cap Z^{1}(\U)}{C_{\A}(\U)+I(\U)}=(0).$
  \end{cor}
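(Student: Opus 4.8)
The plan is to specialize Theorem~\ref{22} to the hypothesis $H^{1}(\A\ltimes\U)=(0)$ and read off the two stated conclusions from the fact that the quotient appearing there must then vanish. Concretely, under the standing assumption that $\delta(\A)\subseteq ann_{\U}\U$ for every $\delta\in Z^{1}(\A,\U)$ together with $H^{1}(\A)=(0)$, Theorem~\ref{22} gives the linear isomorphism
\[
H^{1}(\A\ltimes\U)\cong \frac{Z^{1}(\A,\U)\times [Hom_{\A}(\U)\cap Z^{1}(\U)]}{\mathcal{F}},
\]
so the hypothesis $H^{1}(\A\ltimes\U)=(0)$ forces $Z^{1}(\A,\U)\times [Hom_{\A}(\U)\cap Z^{1}(\U)]=\mathcal{F}$; that is, every pair $(\delta,\tau)$ in the numerator already lies in $\mathcal{F}$. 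This is the same mechanism used in the corollary following Theorem~\ref{11}, and I would mirror that proof almost verbatim with the roles of the two factors interchanged.

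For the first conclusion, I would take an arbitrary $\delta\in Z^{1}(\A,\U)$ and consider the pair $(\delta,0)$. Since $0\in Hom_{\A}(\U)\cap Z^{1}(\U)$, this pair lies in the numerator, hence in $\mathcal{F}$. By the description of $\mathcal{F}$ there exist $x\in\U$ with $id_{a}=0$ for all $a\in\A$ such that $\delta=id_{\A,x}$ and $0=r_{a}+id_{\U,x}$; in particular $\delta=id_{\A,x}$ is inner, so $H^{1}(\A,\U)=(0)$. For the second conclusion I would take an arbitrary $\tau\in Hom_{\A}(\U)\cap Z^{1}(\U)$ and feed in the pair $(0,\tau)$, again a legitimate element of the numerator. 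Membership in $\mathcal{F}$ then yields some $x\in\U$ with $id_{a}=0$ (so the associated $a$ satisfies $id_{a}=0$, i.e.\ $r_{a}\in C_{\A}(\U)$) such that $0=id_{\A,x}$ and $\tau=r_{a}+id_{\U,x}$. The condition $id_{\A,x}=0$ places $id_{\U,x}$ in $I(\U)$ by definition, and $id_{a}=0$ places $r_{a}$ in $C_{\A}(\U)$, so $\tau\in C_{\A}(\U)+I(\U)$. Since $C_{\A}(\U)+I(\U)\subseteq Hom_{\A}(\U)\cap Z^{1}(\U)$ always holds (as recorded in Section~2), this proves the quotient $\frac{Hom_{\A}(\U)\cap Z^{1}(\U)}{C_{\A}(\U)+I(\U)}$ is zero.

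The only point requiring care—and the one I would treat as the main obstacle—is bookkeeping the two independent parameters hidden in $\mathcal{F}$: each membership statement supplies its own witness $x\in\U$ and its own $a\in\A$ with $id_{a}=0$, and one must not conflate the witnesses coming from the $(\delta,0)$ computation with those from the $(0,\tau)$ computation. A second subtlety is matching the abstract subspaces $C_{\A}(\U)$ and $I(\U)$ to the explicit conditions $id_{a}=0$ and $id_{\A,x}=0$; this is exactly what their defining displays in Section~2 record, together with Remarks~\ref{inn1} and~\ref{inn2}, so the identification is immediate once the witnesses are named correctly. No deeper analytic input is needed: the whole corollary is a formal consequence of Theorem~\ref{22} and the definitions, so I expect the write-up to be short.
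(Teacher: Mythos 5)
Your proposal is correct and follows essentially the same route as the paper: the paper likewise feeds the pairs $(\delta,0)$ and $(0,\tau)$ into the isomorphism of Theorem~\ref{22}, extracts witnesses from $\mathcal{F}$, and reads off that $\delta$ is inner and $\tau\in C_{\A}(\U)+I(\U)$. Your explicit care in keeping the two sets of witnesses separate is if anything slightly more precise than the paper's own write-up, which reuses the symbols $a_0,x_0$ for both memberships.
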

  \begin{proof}
  Let $\delta\in Z^{1}(\A,\U)$ and $\tau\in Hom_{\A}(\U)\cap Z^{1}(\U)$. By the hypothesis, 
$(\delta,0),(0,\tau)\in  Z^{1}(\A,\U)\times Hom_{\A}(\U)\cap Z^{1}(\U)$. Again by the hypothesis and the preceding theorem, there are $x_0\in\U$ and $a_0\in \A$ such that $id_{a_0}=0$ and 
\[(\delta,0)=(id_{\A,x_0},r_{a_0}+id_{\U,x_0})\quad , \quad (0,\tau)=(id_{\A,x_0},r_{a_0}+id_{\U,x_0}).\]
Now the result follows from these equalities.
  \end{proof}
If we assume that $\A$ is a Banach algebra with $ann_{\A}\A =(0)$ and $\delta\in Z^{1}(\A)$ where $\delta\neq 0$, then by putting $\U =\A$ we have $\delta(\A)\not\subseteq ann_{\A}\A =ann_{\A}\U =(0)$. So in this case, the condition $\delta(\A)\subseteq ann_{\A}\U$ is not satisfied on $\A\ltimes\U$ for every derivation $\delta\in Z^{1}(\A,\U)$. This shows that this condition does not hold in general. In the following we give an example of Banach algebras which are satisfying in conditions of Theorem \ref{22}.
\begin{exm}
If $\A$ is a Banach algebra and $\U$ is a commutative Banach $\A$-bimodule such that $H^{1}(\A,\U)=(0)$, then $Z^{1}(\A,\U)=(0)$. So in this case, for every $\delta\in Z^{1}(\A,\U)$, $\delta(\A)\subseteq ann_{\U}\U$ (in fact $\delta =0$). In this case if $H^{1}(\A)=(0)$ and $\A\ltimes\U$ satisfies in Theorem \ref{22}, then
  \[ H^{1}(\A\ltimes\U)\cong \frac{Hom_{\A}(\U)\cap Z^{1}(\U)}{C_{\A}(\U)\cap N^{1}(\U)}.\]
\end{exm}
 Note that if $\A$ is a super amenable Banach algebra and $\U$ is a commutative Banach $\A$-bimodule, then above example holds.
 \par 
 In the continuation we consider a case on $\A\ltimes\U$ in which for any derivation $\delta_1\in Z^{1}(\A)$ and $\delta_2\in Z^{1}(\A,\U)$ we have $\delta_1(\A)\subseteq ann_{\A}\U$ and $\delta_2(\A)\subseteq ann_{\U}\U$.  In fact by these conditions on $\A\ltimes\U$, for every $\delta_1\in Z^{1}(\A)$ and $\delta_2\in Z^{1}(\A,\U);$
  \[\delta_1(a)x+\delta_2(a)x=0\quad \text {and} \quad x\delta_1 (a)+x\delta_2 (a)=0 \quad\quad (a\in \A,x\in \U).\]
In this case, every derivation $D\in Z^{1}(A\ltimes\U)$ can be written as $D=D_1+D_2$ where 
  \[D_{1}((a,x))=(\delta_1 (a),\delta_2(a))\quad \text{and} \quad D_{2}((a,x))=(0,\tau_2(x))\]
are continuous derivations on $\A\ltimes\U$ and $\tau\in Hom_{\A}(\U)\cap Z^{1}(\U)$. In this case, we conclude that 
$$R_{\A}(\U)+N^{1}(\U)\subseteq Hom_{\A}(\U)\cap Z^{1}(\U).$$  
\begin{thm}\label{33}
 Suppose that for every $\delta_1\in Z^{1}(\A)$ and $\delta_2\in Z^{1}(\A,\U)$ we have \[\delta_1(\A)\subseteq ann_{\A}\U \quad \text{and} \quad \delta_{2}(\A)\subseteq ann _{\U}\U.\]
Suppose further that $\frac{Hom_{\A}(\U)\cap Z^{1}(\U)}{R_{\A}(\U)+ N^{1}(\U)}=(0)$. Then 
 \[H^{1}(\A\ltimes\U)\cong \frac{Z^{1}(\A)\times Z^{1}(\A,\U)}{\mathcal{K}},\]
 where $\mathcal{K}$ is the following linear subspace of $Z^{1}(\A)\times Z^{1}(\A,\U)$;
 \[\mathcal{K}=\{(id_{a},id_{\A,x})\, \mid \,  r_{a}+id_{\U,x}=0\,\,(a\in \A,x\in \U)\}.\]
 \end{thm}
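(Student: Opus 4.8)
The plan is to follow the template of Theorems \ref{11} and \ref{22} and realize the right-hand quotient as the image of an explicit linear surjection whose kernel is exactly $\mathcal{K}$. First I would define
\[\Phi:Z^{1}(\A)\times Z^{1}(\A,\U)\rightarrow H^{1}(\A\ltimes\U),\qquad \Phi((\delta_1,\delta_2))=[D_{\delta_1,\delta_2}],\]
where $D_{\delta_1,\delta_2}((a,x))=(\delta_1(a),\delta_2(a))$ and $[\,\cdot\,]$ denotes the class in $H^{1}(\A\ltimes\U)$. Well-definedness comes from Corollary \ref{tak}: since the hypothesis supplies $\delta_1(\A)\subseteq ann_{\A}\U$ for every $\delta_1\in Z^{1}(\A)$ and $\delta_2(\A)\subseteq ann_{\U}\U$ for every $\delta_2\in Z^{1}(\A,\U)$, parts (i) and (ii) of that corollary make $(a,x)\mapsto(\delta_1(a),0)$ and $(a,x)\mapsto(0,\delta_2(a))$ continuous derivations, so their sum $D_{\delta_1,\delta_2}$ lies in $Z^{1}(\A\ltimes\U)$. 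Linearity of $\Phi$ is immediate.

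For surjectivity I would take an arbitrary $D\in Z^{1}(\A\ltimes\U)$ and invoke the decomposition recorded in the discussion preceding the theorem, namely $D=D_1+D_2$ with $D_1((a,x))=(\delta_1(a),\delta_2(a))$ and $D_2((a,x))=(0,\tau_2(x))$, where $\tau_2\in Hom_{\A}(\U)\cap Z^{1}(\U)$. The triviality assumption $\frac{Hom_{\A}(\U)\cap Z^{1}(\U)}{R_{\A}(\U)+N^{1}(\U)}=(0)$ then lets me write $\tau_2=r_{a_0}+id_{\U,x_0}$ for some $a_0\in\A$ and $x_0\in\U$. Using the inner-derivation formula on $\A\ltimes\U$,
\[id_{(a_0,x_0)}((a,x))=(id_{a_0}(a),\,id_{\A,x_0}(a)+r_{a_0}(x)+id_{\U,x_0}(x)),\]
I would then observe that $D-id_{(a_0,x_0)}=D_{\delta_1-id_{a_0},\,\delta_2-id_{\A,x_0}}$, whence $[D]=\Phi((\delta_1-id_{a_0},\delta_2-id_{\A,x_0}))$. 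The point that makes this step legitimate is that the annihilator hypotheses are assumed for \emph{every} element of $Z^{1}(\A)$ and $Z^{1}(\A,\U)$; applying them to the inner derivations $id_{a_0}$ and $id_{\A,x_0}$ shows the adjusted pair still lies in the domain of $\Phi$.

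Finally I would identify $\ker\Phi$ with $\mathcal{K}$ by direct coordinate comparison. If $D_{\delta_1,\delta_2}=id_{(a_0,x_0)}$, matching first coordinates gives $\delta_1=id_{a_0}$; putting $x=0$ in the second coordinate gives $\delta_2=id_{\A,x_0}$; and the leftover identity forces $r_{a_0}+id_{\U,x_0}=0$, so $(\delta_1,\delta_2)\in\mathcal{K}$, the converse being the same computation read in reverse. The first isomorphism theorem for vector spaces then delivers the stated isomorphism. I expect the surjectivity step to be the main obstacle, since it is the only place where all three ingredients must cooperate: the $\tau_1=0$ decomposition, the quotient-triviality hypothesis used to split $\tau_2$ as $r_{a_0}+id_{\U,x_0}$, and the fact that subtracting $id_{(a_0,x_0)}$ keeps the pair inside $Z^{1}(\A)\times Z^{1}(\A,\U)$ precisely because the annihilator conditions hold for inner derivations as well.
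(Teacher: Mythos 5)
Your proposal is correct and follows essentially the same route as the paper's proof: the same map $\Phi((\delta_1,\delta_2))=[D_{\delta_1,\delta_2}]$, surjectivity via the decomposition $D=D_1+D_2$ together with the quotient-triviality hypothesis to write $\tau_2=r_{a_0}+id_{\U,x_0}$ and then subtract $id_{(a_0,x_0)}$, and the kernel identified with $\mathcal{K}$ by coordinate comparison. The only difference is that you spell out the well-definedness and kernel computations that the paper leaves as "easily seen," which is a reasonable elaboration rather than a different argument.
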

  \begin{proof}
 Define the map 
 \[\Phi: Z^{1}(\A)\times Z^{1}(\A,\U)\rightarrow H^{1}(\A\ltimes\U)\]
 by $\Phi ((\delta_1,\delta_2))=[D_{\delta_1 , \delta_2}]$
 where $D_{\delta_1 , \delta_2} ((a,x))=(\delta_1 (a),\delta_2(a))$ is a continuous derivation on $\A\ltimes\U$. The map $\Phi$ is well-defined and linear. If $D\in Z^{1}(\A\ltimes\U)$, then $D((a,x))=(\delta_1 (a),\delta_2(a)+\tau_2(x))$. By the hypothesis, $\tau_2=r_{a}+id_{\U,x}$ where $a\in\A$ and $x\in\U$. Define the derivations $d_1\in Z^{1}(\A)$ and $d_2\in Z^{1}(\A,\U)$ by $d_1=\delta_1 -id_{a}$ and $d_2=\delta_2 -id_{\A,x}$ respectively. Then $D-D_{d_1 ,d_2}=id_{(a ,x )}$. So $\Phi ((\delta_1,\delta_2))=[D_{d_1 , d_2}]=[D]$.
Thus $\Phi$ is surjective. It can be easily seen that $ker \Phi =\mathcal{K}$. So we have the desired vector spaces isomorphism.
 \end{proof}
 \begin{cor}\label{333}
 Suppose that for every $\delta_1\in Z^{1}(\A)$ and $\delta_2\in Z^{1}(\A,\U)$ we have \[\delta_1(\A)\subseteq ann_{\A}\U \quad \text{and} \quad \delta_2(\A)\subseteq ann _{\U}\U.\] Suppose further that 
  $\frac{Hom_{\A}(\U)\cap Z^{1}(\U)}{{R_{\A}(\U)+ N^{1}(\U)}}=(0)$. If $H^{1}(\A\ltimes\U)=(0)$, then $H^{1}(\A)=(0)$ and $H^{1}(\A,\U)=(0)$.
  \end{cor}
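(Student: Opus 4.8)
The plan is to read the conclusion directly off the isomorphism supplied by Theorem \ref{33}. Under precisely the stated hypotheses that theorem gives
\[H^{1}(\A\ltimes\U)\cong \frac{Z^{1}(\A)\times Z^{1}(\A,\U)}{\mathcal{K}},\qquad \mathcal{K}=\{(id_{a},id_{\A,x})\mid r_{a}+id_{\U,x}=0\ (a\in\A,x\in\U)\}.\]
So the first step is to translate the vanishing hypothesis $H^{1}(\A\ltimes\U)=(0)$ into the equality $Z^{1}(\A)\times Z^{1}(\A,\U)=\mathcal{K}$; that is, \emph{every} pair of continuous derivations $(\delta_1,\delta_2)$ already lies in $\mathcal{K}$.

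Next I would exploit this equality by testing it on the two coordinate slices. For an arbitrary $\delta_1\in Z^{1}(\A)$, since the zero map belongs to $Z^{1}(\A,\U)$, the pair $(\delta_1,0)$ lies in $Z^{1}(\A)\times Z^{1}(\A,\U)=\mathcal{K}$; unwinding the definition of $\mathcal{K}$ then produces $a\in\A$ and $x\in\U$ with $\delta_1=id_{a}$ (together with the side relations $id_{\A,x}=0$ and $r_a+id_{\U,x}=0$, which are not needed for this coordinate). Hence $\delta_1$ is inner and $H^{1}(\A)=(0)$. Symmetrically, for $\delta_2\in Z^{1}(\A,\U)$ the pair $(0,\delta_2)$ lies in $\mathcal{K}$, yielding $x\in\U$ with $\delta_2=id_{\A,x}$, so $\delta_2$ is inner and $H^{1}(\A,\U)=(0)$. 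This mirrors exactly the proofs of the corollaries following Theorems \ref{11} and \ref{22}.

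There is no genuine obstacle here: the argument is a one-line application of Theorem \ref{33} followed by inserting the zero derivation in one coordinate at a time. The only point worth checking is that the coupling conditions attached to membership in $\mathcal{K}$ (the constraint $r_a+id_{\U,x}=0$, and the fact that each extracted pair also forces $id_{\A,x}=0$ or $id_a=0$ in the complementary coordinate) do not obstruct reading off $\delta_1=id_a$, respectively $\delta_2=id_{\A,x}$. Since I only read a single coordinate from each test pair, these extra relations are automatically satisfied by the witnesses $\mathcal{K}$ provides and are harmless for the conclusion.
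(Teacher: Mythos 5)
Your proposal is correct and follows essentially the same route as the paper's own proof: both pass through the isomorphism of Theorem \ref{33} to conclude $Z^{1}(\A)\times Z^{1}(\A,\U)=\mathcal{K}$ and then test the coordinate slices $(\delta_1,0)$ and $(0,\delta_2)$ to read off that $\delta_1$ and $\delta_2$ are inner. No further comment is needed.
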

  \begin{proof}
  Let $\delta_1\in Z^{1}(\A)$ and $\delta_2\in Z^{1}(\A,\U)$. By the hypothesis 
  \[(\delta_1 , 0)\,,\, (0,\delta_2)\in Z^{1}(\A)\times Z^{1}(\A,\U).\]
 Again by the hypothesis and the preceding theorem, there exists some $(id_{a},id_{\A,x})\in \mathcal{K}$ such that 
  $(\delta_1 , 0)=(id_{a},id_{\A,x})$ and $(0,\delta_2)=(id_{a},id_{\A,x})$ and so $\delta_1$ and $\delta_2$ are inner.
  \end{proof} 
If $H^{1}(\U)=(0)$, since $R_{\A}(\U)+ N^{1}(\U)\subseteq Z^{1}(\U)= N^{1}(\U)$, It follows that 
 $\frac{Hom_{\A}(\U)\cap Z^{1}(\U)}{{R_{\A}(\U)+ N^{1}(\U)}}=(0)$. So if $H^{1}(\U)=(0)$, Theorem \ref{33} and Corollary \ref{333} hold again.  
 \par 
 Let $\A$ be a Banach algebra with $ann_{\A}\A =(0)$ and $\delta_1 ,\delta_2\in Z^{1}(\A)$ such that $\delta_1 +\delta_2 \neq 0$. If we put $\tau =\delta_1+\delta_2$ and define linear map $D$ on $\A\ltimes\A$ by 
 \[D((a,x))=(\delta_1 (a),\delta_2(a)+\tau(x))\quad\quad ((a,x)\in \A\ltimes\A),\]
 then $D\in Z^{1}(\A\ltimes\A )$. But for $a, x\in\A$, the equation $\delta_1 (a)x+\delta_2 (a)x=0$
 is not necessarily true. This example does not satisfy the conditions of Theorem \ref{33}. 
 \par 
The last case we consider is as follows.
 \begin{thm}\label{44}
 Let $H^{1}(\A)=(0)$ and $H^{1}(\A,\U)=(0)$. Then 
 \[H^{1}(\A\ltimes\U)\cong \frac{Hom_{\A}(\U)\cap Z^{1}(\U)}{C_{\A}(\U)+I(\U)}.\]
\end{thm}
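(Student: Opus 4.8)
The plan is to reuse the template that produced Theorems \ref{11}, \ref{22} and \ref{33}: build an explicit surjective linear map onto $H^1(\A\ltimes\U)$ and identify its kernel, then invoke the first isomorphism theorem for vector spaces. Concretely, I would define
\[
\Phi:Hom_{\A}(\U)\cap Z^{1}(\U)\rightarrow H^{1}(\A\ltimes\U),\qquad \Phi(\tau)=[D_{\tau}],\quad D_{\tau}((a,x))=(0,\tau(x)).
\]
Since $\tau$ is simultaneously a continuous derivation and a continuous $\A$-bimodule homomorphism, Corollary \ref{tak}$(iv)$ guarantees that $D_{\tau}$ is a continuous derivation on $\A\ltimes\U$, so $\Phi$ is well-defined; linearity is immediate. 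Note that, unlike the previous three theorems, here I impose no annihilator hypothesis, so this is the ``generic'' case and both cohomological vanishings $H^{1}(\A)=(0)$ and $H^{1}(\A,\U)=(0)$ will be used together.

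For surjectivity I would take an arbitrary $D\in Z^{1}(\A\ltimes\U)$, written (per the standing convention of this section) as $D((a,x))=(\delta_1(a),\delta_2(a)+\tau_2(x))$ with $\delta_1\in Z^{1}(\A)$, $\delta_2\in Z^{1}(\A,\U)$ and $\tau_2\in Z^{1}(\U)$. Using $H^{1}(\A)=(0)$ I obtain $a_0\in\A$ with $\delta_1=id_{a_0}$, and using $H^{1}(\A,\U)=(0)$ I obtain $x_0\in\U$ with $\delta_2=id_{\A,x_0}$. I then set $\tau:=\tau_2-r_{a_0}-id_{\U,x_0}$. A direct computation of the inner derivation on $\A\ltimes\U$ determined by $(a_0,x_0)$ gives
\[
id_{(a_0,x_0)}((a,x))=\bigl(id_{a_0}(a),\,id_{\A,x_0}(a)+r_{a_0}(x)+id_{\U,x_0}(x)\bigr),
\]
and comparing this with $D((a,x))-D_{\tau}((a,x))$ shows $D-D_{\tau}=id_{(a_0,x_0)}$, whence $[D]=\Phi(\tau)$.

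The step I expect to be the main obstacle is verifying that $\tau\in Hom_{\A}(\U)\cap Z^{1}(\U)$. That $\tau\in Z^{1}(\U)$ is routine, since it is a difference of three continuous derivations on $\U$ ($r_{a_0}$ being a derivation by Remark \ref{inn1}$(i)$). The delicate point is that $\tau$ is an $\A$-bimodule homomorphism. Here I would combine the two compatibility identities satisfied by $\tau_2$ with the twisted Leibniz identities of Remark \ref{inn1}$(ii)$ for $r_{a_0}$ and Remark \ref{inn2}$(i)$ for $id_{\U,x_0}$; the anomalous terms $\delta_1(a)x=id_{a_0}(a)x$ and $\delta_2(a)x=id_{\A,x_0}(a)x$ cancel exactly, leaving
\[
\tau(ax)=a\tau_2(x)-a\,r_{a_0}(x)-a\,id_{\U,x_0}(x)=a\tau(x),
\]
and symmetrically $\tau(xa)=\tau(x)a$. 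This cancellation is precisely where both hypotheses are consumed.

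Finally I would compute $\ker\Phi$. We have $\tau\in\ker\Phi$ if and only if $D_{\tau}\in N^{1}(\A\ltimes\U)$, and since $D_{\tau}$ has the form $((a,x))\mapsto(0,\tau(x))$, Corollary \ref{tak}$(iv)$ tells us this happens exactly when $\tau=r_{a_0}+id_{\U,x_0}$ for some $a_0\in Z(\A)$ (equivalently $id_{a_0}=0$) and some $x_0\in\U$ with $ax_0=x_0a$ for all $a\in\A$ (equivalently $id_{\A,x_0}=0$); that is, precisely when $\tau\in C_{\A}(\U)+I(\U)$. Thus $\ker\Phi=C_{\A}(\U)+I(\U)$, and the first isomorphism theorem yields
\[
H^{1}(\A\ltimes\U)\cong\frac{Hom_{\A}(\U)\cap Z^{1}(\U)}{C_{\A}(\U)+I(\U)},
\]
as claimed.
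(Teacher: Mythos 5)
Your proposal is correct and follows essentially the same route as the paper's own proof: the same map $\Phi(\tau)=[D_{\tau}]$, the same use of $H^{1}(\A)=(0)$ and $H^{1}(\A,\U)=(0)$ to write $\delta_1=id_{a_0}$, $\delta_2=id_{\A,x_0}$ and subtract $r_{a_0}+id_{\U,x_0}$ from $\tau_2$, and the same identification of $\ker\Phi$ with $C_{\A}(\U)+I(\U)$ via Corollary \ref{tak}$(iv)$. Your explicit cancellation argument for $\tau\in Hom_{\A}(\U)$ simply fills in a step the paper leaves to the reader.
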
 
\begin{proof}
Define the map 
\[\Phi : Hom_{\A}(\U)\cap Z^{1}(\U)\rightarrow H^{1}(\A\ltimes\U)\]
by $\Phi(\tau)=[D_{\tau}]$ where $D_{\tau}((a,x))=(0,\tau (x))$ is a continuous derivation on $\A\ltimes\U$. The map $\Phi$ is well-defined linear. If $D\in Z^{1}(\A\ltimes\U )$, then $D=(\delta_1 , \delta_2 +\tau_2)$. By hypotheses  $\delta_1 =id _{a}$ and $\delta_2 =id_{\A, x}$ for some  $a\in \A$ and $x\in \U$. Define the map $\tau:\U\rightarrow \U$ by $\tau=\tau_2-r_{a}-id_{\U, x}$. Then $\tau\in Hom_{\A}(\U)\cap Z^{1}(\U)$ and so $D-D_{\tau}=id_{(a , x)}$. Hence $\Phi (\tau)=[D_\tau]=[D]$. Thus $\Phi$ is surjective. By Corollary \ref{tak}-$(iv)$, $ker \Phi =C_{\A}(\U)+I(\U)$. So the desired vector space isomorphism is established.
\end{proof}
The following corollary follows immediately from the preceding theorem.
\begin{cor}
If $H^{1}(\A)=(0),H^{1}(\A,\U)=(0)$ and $H^{1}(\A\ltimes\U)=(0)$, then for every $\tau\in Hom_{\A}(\U)\cap Z^{1}(\U)$ there are some $r_{a}\in C_{\A}(\U)$ and $id_{\U , x}\in I(\U)$ such that $\tau =r_{a}+id_{x}$.
\end{cor}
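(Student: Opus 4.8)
The plan is to follow the same strategy that worked for Theorems \ref{11}, \ref{22} and \ref{33}: build an explicit linear surjection onto $H^{1}(\A\ltimes\U)$ whose kernel is exactly $C_{\A}(\U)+I(\U)$, and then invoke the first isomorphism theorem for vector spaces. Concretely, I would define
\[\Phi:Hom_{\A}(\U)\cap Z^{1}(\U)\longrightarrow H^{1}(\A\ltimes\U),\qquad \Phi(\tau)=[D_{\tau}],\]
where $D_{\tau}((a,x))=(0,\tau(x))$. The first step is to see that $\Phi$ is well-defined: by Corollary \ref{tak}-$(iv)$ the map $D_{\tau}$ is a continuous derivation on $\A\ltimes\U$ precisely because $\tau$ is simultaneously a derivation and an $\A$-bimodule homomorphism, so $D_{\tau}\in Z^{1}(\A\ltimes\U)$ and $\Phi(\tau)$ makes sense. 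Linearity of $\Phi$ is immediate.

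For surjectivity I would start from an arbitrary $D\in Z^{1}(\A\ltimes\U)$, which by the standing convention of this section has the form $D((a,x))=(\delta_1(a),\delta_2(a)+\tau_2(x))$ with $\delta_1\in Z^{1}(\A)$, $\delta_2\in Z^{1}(\A,\U)$ and $\tau_2\in Z^{1}(\U)$ satisfying the two module-compatibility identities. The hypotheses $H^{1}(\A)=(0)$ and $H^{1}(\A,\U)=(0)$ supply $a_0\in\A$ and $x_0\in\U$ with $\delta_1=id_{a_0}$ and $\delta_2=id_{\A,x_0}$. I would then set $\tau:=\tau_2-r_{a_0}-id_{\U,x_0}$ and show $[D]=\Phi(\tau)$, which reduces to the identity $D-D_{\tau}=id_{(a_0,x_0)}$; this follows by expanding the inner derivation $id_{(a_0,x_0)}$ through Theorem \ref{asll} into its components $id_{a_0},\,id_{\A,x_0},\,r_{a_0},\,id_{\U,x_0}$ and comparing with $D-D_{\tau}$ term by term.

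The main obstacle — and the only step requiring genuine computation — is verifying that this $\tau$ actually lands in $Hom_{\A}(\U)\cap Z^{1}(\U)$, so that $\Phi(\tau)$ is even defined. That $\tau$ is a derivation on $\U$ is clear, since $\tau_2$, $r_{a_0}$ (Remark \ref{inn1}-$(i)$) and $id_{\U,x_0}$ are all derivations. The module-homomorphism property is where the hypotheses enter: expanding $\tau(ax)$ via the compatibility identity for $\tau_2(ax)$, the relation $r_{a_0}(ax)=a\,r_{a_0}(x)+id_{a_0}(a)x$ from Remark \ref{inn1}-$(ii)$, and $id_{\U,x_0}(ax)=a\,id_{\U,x_0}(x)+id_{\A,x_0}(a)x$ from Remark \ref{inn2}-$(i)$, the cross terms collapse to $[\delta_1(a)-id_{a_0}(a)]x+[\delta_2(a)-id_{\A,x_0}(a)]x$, which vanishes exactly because $\delta_1=id_{a_0}$ and $\delta_2=id_{\A,x_0}$; the computation for $\tau(xa)$ is symmetric. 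This yields $\tau\in Hom_{\A}(\U)$ and completes surjectivity.

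Finally, for the kernel I would again appeal to Corollary \ref{tak}-$(iv)$: $D_{\tau}$ is inner if and only if $\tau=r_{a_0}+id_{\U,x_0}$ with $a_0\in Z(\A)$ and $x_0$ central in the sense that $ax_0=x_0a$ for every $a\in\A$. Translating these conditions, $a_0\in Z(\A)$ means $id_{a_0}=0$, so $r_{a_0}\in C_{\A}(\U)$, while $ax_0=x_0a$ for all $a$ means $id_{\A,x_0}=0$, so $id_{\U,x_0}\in I(\U)$; conversely each element of $C_{\A}(\U)+I(\U)$ arises this way. Hence $\ker\Phi=C_{\A}(\U)+I(\U)$, and the stated isomorphism follows.
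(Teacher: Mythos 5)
Your construction of $\Phi$, the verification that it is well defined and surjective, and the identification $\ker\Phi=C_{\A}(\U)+I(\U)$ are all correct, and they reproduce, step for step, the paper's proof of Theorem \ref{44}; the paper's own justification of this corollary is simply to cite that theorem, so in substance you are on the paper's route. However, as written your argument stops one sentence short of the statement actually being asked: you end by asserting that ``the stated isomorphism follows,'' but the corollary is not an isomorphism --- it is the assertion that every $\tau\in Hom_{\A}(\U)\cap Z^{1}(\U)$ decomposes as $r_{a}+id_{\U,x}$ with $r_{a}\in C_{\A}(\U)$ and $id_{\U,x}\in I(\U)$ --- and the hypothesis $H^{1}(\A\ltimes\U)=(0)$ is never used anywhere in your proof. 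You need to add the final deduction: since $H^{1}(\A\ltimes\U)=(0)$, the isomorphism forces $Hom_{\A}(\U)\cap Z^{1}(\U)=C_{\A}(\U)+I(\U)$ (equivalently, $\Phi(\tau)=[D_{\tau}]=0$ for every $\tau$, so $\tau\in\ker\Phi$), which is exactly the claimed decomposition.

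A further remark: for this corollary alone the surjectivity of $\Phi$ --- and hence the hypotheses $H^{1}(\A)=(0)$ and $H^{1}(\A,\U)=(0)$ --- is not actually needed. It suffices to observe that $D_{\tau}((a,x))=(0,\tau(x))$ is a continuous derivation by Corollary \ref{tak}-$(iv)$, that $H^{1}(\A\ltimes\U)=(0)$ makes it inner, and that the inner-ness criterion of Corollary \ref{tak}-$(iv)$ then gives precisely $\tau=r_{a_0}+id_{\U,x_0}$ with $id_{a_0}=0$ and $id_{\A,x_0}=0$, i.e.\ $r_{a_0}\in C_{\A}(\U)$ and $id_{\U,x_0}\in I(\U)$. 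That said, your longer route through the full isomorphism is the one the paper intends.
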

In the following an example of Banach algebras satisfying the conditions of Theorem \ref{44}, is given.
\begin{exm}
If $\A$ is a weakly amenable commutative Banach algebra, then for any commutative Banach $\A$-bimodule $\mathcal{X}$ we have $H^{1}(\A,\mathcal{X})=(0)$. So in this case, if $\U$ is a commutative Banach $\A$-bimodule satisfying the conditions of Theorem \ref{44}, then $Z^{1}(\A)=H^{1}(\A)=(0)$ and $Z^{1}(\A,\U)=H^{1}(\A,\U)=(0)$ and hence 
\[H^{1}(\A\ltimes\U)\cong \frac{Hom_{\A}(\U)\cap Z^{1}(\U)}{R_{\A}(\U)+ N^{1}(\U)}.\]
\end{exm}
This example could be also derived from Theorem \ref{22}.
%%************************************************************************
\section{Applications}
In this section we investigate applications of the previous sections and give some examples.
\subsection*{Direct product of Banach algebras}
Let $\A$ and $\U$ be Banach algebras. With trivial module actions $\A\U=\U\A =(0)$, as we saw in Example \ref{dp}, $\A\ltimes\U=\A\times\U$ where $\A\times\U$ is $l^{1}$-direct product of Banach algebras. In this case, $ann_{\A}\U =\A$ and so for every derivation $\delta:\A\rightarrow \A$ we have $\delta(\A)\subseteq ann_{\A}\U$. Also in this case, $R_{\A}(\U)=(0)$ and $Hom_{\A}(\U)=\mathbb{B}(\U)$. The following proposition follows from Theorem \ref{asll}.
\begin{prop}\label{dpd}
Let $\A$ and $\U$ be Banach algebras and $D:\A\times\U\rightarrow\A\times\U$ be a map. The following are equivalent.
\begin{enumerate}
\item[(i)]
$D$ is a derivation.
\item[(ii)]
\[D((a,x))=(\delta_1(a)+\tau_1(x),\delta_2(a)+\tau_2(x))\quad \quad ((a,x)\in\A\times\U)\]
such that $\delta_1:\A\rightarrow\A,\tau_2:\U\rightarrow\U$ are derivations and $\tau_1:\U\rightarrow\A$ and $\delta_2:\A\rightarrow\U$ are linear maps satisfying the following conditions;
\[\tau_1(\U)\subseteq ann_{\A}\A , \,\, \delta_2(\A)\subseteq ann_{\U}\U, \,\,  \tau_1(xy)=0, \, \text{and} \,\, \delta_2(ab)=0 \quad \quad(a,b\in\A , x,y\in \U).\]
\end{enumerate}
Moreover, if $ann_{\U}\U=(0)$ or $\A^{2}=\A$ and $ann_{\A}\A=(0)$ or $\U ^{2}=\U$, then $\delta_2=0$ and $\tau_1=0$, respectively.
\end{prop}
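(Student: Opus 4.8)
The plan is to specialize the general structure theorem (Theorem~\ref{asll}) to the direct-product situation, where the module actions are trivial, and then read off the extra constraints that triviality imposes on the four component maps. Since $\A\times\U=\A\ltimes\U$ with $\A\U=\U\A=(0)$, I would start by invoking Theorem~\ref{asll} directly: any derivation $D$ has the form $D((a,x))=(\delta_1(a)+\tau_1(x),\delta_2(a)+\tau_2(x))$ with $\delta_1:\A\rightarrow\A$ and $\delta_2:\A\rightarrow\U$ derivations, $\tau_1:\U\rightarrow\A$ an $\A$-bimodule homomorphism with $\tau_1(\U^2)=(0)$, and $\tau_2$ satisfying the three displayed identities of part $(d)$. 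The work is to simplify these using the fact that $ax=xa=0$ for all $a\in\A$, $x\in\U$.

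The key observations I would carry out in order are as follows. First, because the module actions vanish, the compatibility identities in part $(d)$ for $\tau_2(ax)$ and $\tau_2(xa)$ collapse: the left sides are $\tau_2(0)=0$, while the right sides reduce to $\delta_1(a)x+\delta_2(a)x$ and $x\delta_1(a)+x\delta_2(a)$, where all module products with $\delta_2(a)\in\U$ and $x\in\U$ are the internal $\U$-multiplication and all products with $\delta_1(a)\in\A$ and $x\in\U$ are the (now trivial) $\A$-action on $\U$. Spelling this out forces $\delta_2(a)x=x\delta_2(a)=0$ for all $a\in\A$, $x\in\U$, i.e. $\delta_2(\A)\subseteq ann_{\U}\U$; the $\delta_1$ terms vanish automatically since the action is trivial. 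Second, since $\tau_1$ lands in $\A$ and is an $\A$-bimodule homomorphism into $\A$, while for $b\in\A$, $x\in\U$ the module products $b\,\tau_1(x)$ and $\tau_1(x)\,b$ are the trivial actions of $\A$ on $\U$—wait, here one must be careful: $\tau_1(x)\in\A$, so the relevant relation comes from applying $D$ to $(0,x)(a,0)=(0,xa)=(0,0)$ and $(a,0)(0,x)=(0,ax)=(0,0)$, which yields $a\,\tau_1(x)=\tau_1(x)\,a$ behavior forcing $\tau_1(\U)\subseteq ann_{\A}\A$. Third, the relation $\tau_2(xy)=x\tau_1(y)+\tau_1(x)y+x\tau_2(y)+\tau_2(x)y$ together with $\tau_1(\U)\subseteq ann_{\A}\A$ (so the mixed $\A$-on-$\U$ products $x\tau_1(y)$ and $\tau_1(x)y$ vanish) reduces to the Leibniz identity $\tau_2(xy)=x\tau_2(y)+\tau_2(x)y$, confirming $\tau_2$ is a derivation on $\U$; likewise $\delta_2(ab)=0$ follows from the derivation property of $\delta_2$ combined with $\delta_2(\A)\subseteq ann_{\U}\U$ and triviality, and $\tau_1(xy)=0$ is just the restriction of the part-$(c)$ condition $\tau_1(\U^2)=(0)$.

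For the ``Moreover'' clause I would argue by the standard annihilator/factorization dichotomy. If $ann_{\U}\U=(0)$, then $\delta_2(\A)\subseteq ann_{\U}\U=(0)$ gives $\delta_2=0$ immediately; alternatively, if $\A^2=\A$, then since $\delta_2(ab)=0$ for all $a,b\in\A$ we get $\delta_2(\A)=\delta_2(\A^2)=(0)$ by linearity and spanning. The symmetric statements for $\tau_1$ follow identically: $ann_{\A}\A=(0)$ kills $\tau_1$ via $\tau_1(\U)\subseteq ann_{\A}\A$, while $\U^2=\U$ kills $\tau_1$ via $\tau_1(\U)=\tau_1(\U^2)=(0)$. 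The main obstacle—really the only delicate point—is keeping straight which products are the internal algebra multiplications in $\A$ or $\U$ versus the module actions, and correctly tracking that $\tau_1$ has codomain $\A$ so its relevant annihilator condition is $ann_{\A}\A$ (not $ann_{\A}\U$, which equals all of $\A$ here and would be vacuous); once the bookkeeping is settled, every step is a direct substitution into the identities of Theorem~\ref{asll}.
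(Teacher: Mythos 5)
Your proposal is correct and follows exactly the route the paper intends: the paper states this proposition as an immediate consequence of Theorem~\ref{asll}, and your specialization of the four component maps to the trivial-action case, together with the annihilator/factorization dichotomy for the ``moreover'' clause, is that argument spelled out. (One cosmetic remark: the terms $x\tau_1(y)$, $\tau_1(x)y$ and $a\delta_2(b)$, $\delta_2(a)b$ vanish simply because the module actions are trivial, not because of the annihilator inclusions you cite there; the conclusions are unaffected.)
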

By this proposition it is clear if $\A$ or $\U$ has a bounded approximate identity, then $\delta_2=0$ and $\tau_1=0$. Hence in this case every derivation $D$ on $\A\times\U$ is of the form $D((a,x))=(\delta(a),\tau(x))$ where $\delta$ and $\tau$ are derivations on $\A$ and $\U$, respectively.
\begin{rem}\label{d1}
If $\A$ and $\U$ are Banach algebras, then for any derivation $\delta:\A\rightarrow\A$ and $\tau:\U\rightarrow\U$ the maps $D_1$ and $D_2$ on $\A\times\U$ given by 
\[D_{1}((a,x))=(\delta (a),0)\quad \text{and} \quad D_2((a,x))=(0,\tau (x)),\]
are derivations. According to this point, if every derivation on $\A\times\U$ is continuous, then every derivation on $\A$ and every derivation on $\U$ is continuous.
\par 
Also let  $\A$ or $\U$ have a bounded approximate identity. So by above arguments if every derivation on $\A$ and every derivation on $\U$ is continuous, then every derivation on $\A\times\U$ is continuous.
\end{rem}
\begin{rem}\label{d2}
Let $\A$ and $\U$ be Banach algebras and $D\in Z^{1}(\A\ltimes\U)$. If $ann_{\U}\U =(0)$ or $\overline{\A^{2}}=\A$ and $ann_{\A}\A =(0)$ or $\overline{\U ^{2}}=\U$, then in the representation of  $D$ as in the preceding proposition, $\delta_2=0$ and $\tau_1=0$. So $D((a,x))=(\delta(a),\tau(x))$ where $\delta \in Z^{1}(\A)$ and $\tau\in Z^{1}(\U)$. In this case we can conclude
\[ Z^{1}(\A\times\U)\cong Z^{1}(\A)\times Z^{1}(\U) \quad \text{and} \quad N^{1}(\A\times\U)\cong N^{1}(\A)\times N^{1}(\U).\]
Moreover
\[ H^{1}(\A\times\U)\cong H^{1}(\A)\times H^{1}(\U).\]
In particular if $\A$ or $\U$ have a bounded approximate identity, then above observation holds.
\par 
In the case of isomorphism of the first cohomology group by Theorem \ref{11}, one can weaken the conditions. Indeed $\A\times\U$ with $\overline{\U ^{2}}=\U$ or $ann_{\A}\A =(0)$ satisfies the conditions of Theorem \ref{11}. Since $Hom_{\A}(\U)=\mathbb{B}(\U)$ and $R_{\A}(\U)=(0)$, in this case  
\[H^{1}(\A\times\U)\cong H^{1}(\A)\times H^{1}(\U)\]
(In fact in this case, $\mathcal{E}=N^{1}(\A)\times N^{1}(\U)$).
\par 
If $\overline{\A ^{2}}=\A$ or $ann_{\U}\U =(0)$, by symmetry as above we obtain the same result again.
\end{rem}
As the next example confirms, it is not necessarily true that for any derivation on $\A\times\U$, $\tau_1 =0$ or $\delta_1 =0$ in the decomposition of it. The example is given from \cite{ess}.
 \begin{exm}
 Let $\A$ be a Banach algebra with $ann_{\A}\A\neq 0$. Put $\U:=ann_{\A}\A$. Then $\U$ is a closed subalgebra of $\A$. Define the map $D$ on $\A\times\U$ by $D((a,x))=(x,0)$. Then $D$ is a derivation on $\A\times\U$ such that in its representation the map $\tau_1:\U\rightarrow\A$ is given by $\tau_1(x)=x$ with $\tau_1\neq 0$. 
 \end{exm}
Let $\A$ and $\U$ be Banach algebras and $\alpha:\A\rightarrow \U$ be a continuous algebra homomorphism with $\Vert \alpha\Vert \leq 1$. Then the following module actions turn $\U$ into a Banach $\A$-bimodule with the compatible actions and norm;
\[
ax=\alpha(a)x \quad \text{and} \quad
xa=x\alpha(a)\quad\quad (a\in \A,x\in \U).\]
In this case we can consider $\A\ltimes \U$ with the multiplication given by 
$$(a,x)(b,y)=(ab,\alpha(a) y+x\alpha(b)+xy).$$
We denote this Banach algebra by $\A\ltimes_{\alpha} \U$ which is introduced in \cite{bh}. In the next proposition we see that $\A\ltimes_{\alpha} \U$ is isomorphic as a Banach algebra to the direct product $\A\times \U$.
\begin{prop}\label{dal}
Let $\A$, $\U$ be Banach algebras and $\alpha:\A\rightarrow \U$ be a continuous algebra homomorphism with $\Vert \alpha\Vert \leq 1$. Consider the Banach algebra $\A\ltimes_{\alpha} \U$ as above. Then $\A\ltimes_{\alpha} \U$ is isomorphic as a Banach algebra to  $\A\times \U$.
\end{prop}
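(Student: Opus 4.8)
The plan is to write down an explicit \emph{twisting} isomorphism rather than to recognise the direct-product structure abstractly. I would define the linear map $\Theta:\A\ltimes_{\alpha}\U\rightarrow\A\times\U$ by
\[
\Theta((a,x))=(a,\,x+\alpha(a))\qquad(a\in\A,\ x\in\U),
\]
where $\A\times\U$ carries the ordinary $l^{1}$-direct product multiplication $(a,x)(b,y)=(ab,xy)$ of Example \ref{dp}. The heuristic is that the cross terms $\alpha(a)y$ and $x\alpha(b)$ appearing in the $\alpha$-product should be absorbed by shifting the $\U$-coordinate by $\alpha(a)$, and the residual term $\alpha(a)\alpha(b)$ is precisely what the homomorphism property of $\alpha$ supplies.

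First I would check that $\Theta$ is an algebra homomorphism. Since $(a,x)(b,y)=(ab,\alpha(a)y+x\alpha(b)+xy)$ in $\A\ltimes_{\alpha}\U$, the second coordinate of $\Theta((a,x)(b,y))$ is $\alpha(a)y+x\alpha(b)+xy+\alpha(ab)$. On the other hand, computing $\Theta((a,x))\Theta((b,y))=(a,x+\alpha(a))(b,y+\alpha(b))$ in $\A\times\U$, the second coordinate is
\[
(x+\alpha(a))(y+\alpha(b))=xy+x\alpha(b)+\alpha(a)y+\alpha(a)\alpha(b).
\]
These agree exactly because $\alpha(ab)=\alpha(a)\alpha(b)$, while the first coordinates agree trivially. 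This single use of the homomorphism property of $\alpha$ is the crux of the whole argument.

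Next I would verify that $\Theta$ is a bijection by exhibiting its (evidently linear) inverse $\Theta^{-1}((a,x))=(a,x-\alpha(a))$; a one-line substitution shows $\Theta\circ\Theta^{-1}$ and $\Theta^{-1}\circ\Theta$ are both the identity. Finally, continuity of $\Theta$ and $\Theta^{-1}$ follows from continuity of $\alpha$: using $\Vert\alpha\Vert\leq 1$ one gets $\Vert\Theta((a,x))\Vert=\Vert a\Vert+\Vert x+\alpha(a)\Vert\leq 2\Vert a\Vert+\Vert x\Vert\leq 2\Vert(a,x)\Vert$, and the identical estimate applied to $\Theta^{-1}$ bounds the inverse (alternatively, the open mapping theorem yields continuity of $\Theta^{-1}$ for free). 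Hence $\Theta$ is a bicontinuous Banach-algebra isomorphism. I do not anticipate any genuine obstacle: once the shift $x\mapsto x+\alpha(a)$ is written down, every verification is routine, so the only real content is recognising that this twist is the correct map.
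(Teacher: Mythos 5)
Your proof is correct and is essentially the same as the paper's: the paper uses the map $\theta((a,x))=(a,x-\alpha(a))$ from $\A\times\U$ to $\A\ltimes_{\alpha}\U$, which is precisely the inverse of your $\Theta$, and the key computation is the same single use of $\alpha(ab)=\alpha(a)\alpha(b)$. Your treatment of continuity of the map and its inverse is, if anything, slightly more explicit than the paper's.
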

\begin{proof}
Define $\theta:\A\times \U\rightarrow\A\ltimes_{\alpha} \U$ by $\theta((a,x))=(a, x-\alpha(a))$  $(a,x)\in \A\times \U$. The map $\theta$ is linear and continuous. Also
\[\theta((a,x)(b,y))=(a,x-\alpha(a))(b,y-\alpha(b))=(ab,xy-\alpha(ab))=\theta((a,x))\theta((b,y)),\]
for any $(a,x),(b,y)\in \A\times \U$. It is obvious that $\theta$ is bijective. Hence $\theta$ is a continuous algebra isomorphism.
\end{proof}
\begin{rem}
Let $\A$, $\U$ and $\alpha$ are as Proposition \ref{dal}. By Remarks \ref{d1}, \ref{d2} and Proposition \ref{dal} we have the following.
\par 
If every derivation on $\A\ltimes_{\alpha}\U$ is continuous, then every derivation on $\A$ and every derivation on $\U$ is continuous. If  $\A$ or $\U$ have a bounded approximate identity and every derivation on $\A$ and every derivation on $\U$ is continuous, then every derivation on $\A\ltimes_{\alpha}\U$ is continuous. Also if $\overline{\U ^{2}}=\U$ or $ann_{\A}\A =(0)$ ($\overline{\A ^{2}}=\A$ or $ann_{\U}\U =(0)$), then $H^{1}(\A\ltimes_{\alpha}\U)\cong H^{1}(\A)\times H^{1}(\U)$.
\end{rem}
If $\U$ is a Banach algebra and $\A$ is a closed subalgebra of it, then it is clear that the embedding map $i:\A\rightarrow\U$ is continuous algebra homomorphism and hence we can conmsider $\A\ltimes_{i} \U$ which is isomorphic as a Banach algebra to  $\A\times \U$ by Proposition \ref{dal}. So by above remark we have the following examples.
\begin{exm}
\begin{enumerate}
\item[(i)] If $\A$ is a semisimle Banach algebra with a bounded approximate identity, then every derivation on $\A\ltimes_{i}\A$ is continuous. 
\item[(ii)] Consider the group algebra $L^{1}(G)$ as a closed ideal in the measure algebra $M(G)$. Every derivation on $M(G)$ and every derivation on $L^{1}(G)$ is continuous and $M(G)$ is unital. So every derivation on $L^{1}(G)\ltimes_{i} M(G)$ is continuous.
\item[(iii)] Let $\mathcal{H}$ be a Hilbert space and $\mathcal{N}$ be a complete nest in $\mathcal{H}$. The associated nest algebra $Alg\mathcal{N}$ is a closed subalgebra of $\mathbb{B}(\mathcal{H})$ which is unital. By \cite{chr} every derivation on $Alg\mathcal{N}$ is  continuous. Also $\mathbb{B}(\mathcal{H})$ is a unital $C^{*}$-algebra. Hence every derivation on $Alg\mathcal{N}\ltimes_{i} Alg\mathcal{N}$ and $Alg\mathcal{N}\ltimes_{i} \mathbb{B}(\mathcal{H})$ is continuous.
\end{enumerate}
\end{exm}
\begin{exm}
\begin{enumerate}
\item[(i)] Let $\A$ be a weakly amenable commutative Banach algebra. Since $H^{1}(\A)=(0)$ and $\overline{\A ^{2}}=\A$, it follows that $H^{1}(\A\ltimes_{i}\A)=(0)$.
\item[(ii)]
Sakai showed in \cite{sa} that every continuous derivation on a $W^{*}$-algebra is inner. Every von Neumann algebra is a $W^{*}$-algebra which is unital. Let $\A$ be a von Neumann algebra on a Hilbert space $\mathcal{H}$. Hence $H^{1}(\A\ltimes_{i}\A)=(0)$ and $H^{1}(\A\ltimes_{i}\mathbb{B} (\mathcal{H}))=(0)$.
\item[(iii)]
Let $\mathcal{H}$ be a Hilbert space and $\mathcal{N}$ be a complete nest in $\mathcal{H}$. In \cite{chr}, Christensen proved that $H^{1}(Alg\mathcal{N})=(0)$. Also $\mathbb{B}(\mathcal{H})$ is a von Neumann algebra. Hence  \[ H^{1}(Alg\mathcal{N}\ltimes_{i} Alg\mathcal{N})=(0) \quad \text{ and } \quad H^{1}(Alg\mathcal{N}\ltimes_{i} \mathbb{B}(\mathcal{H}))=(0).\]
\end{enumerate}
\end{exm}
\subsection*{Module extension Banach algebras}
Let $\A$ be a Banach algebra and $\U$ be a Banach $\A$-bimodule. With trivial product $\U ^{2}=(0)$, as we saw in Example \ref{me}, $\A\ltimes\U$ is the same as module extension of $\A$ by $\U$, namely $T(\A,\U)$. In this case, $ann_{\U}\U =\U$ and so for every derivation $\delta:\A\rightarrow\U$ we have $\delta(\A)\subseteq ann_{\U}\U$. By these notes, Theorem \ref{asll} and Corollary \ref{tak}, we have the following proposition on derivations on $T(\A,\U)$.
\begin{prop}\label{ttd}
 Let $D:T(\A,\U)\rightarrow T(\A,\U)$ be a linear map such that 
 \[D((a,x))=(\delta_1(a)+\tau_1(x),\delta_2(a)+\tau_2(x))\quad\quad ((a,x)\in \A\ltimes\U).\]
 The following are equivalent.
 \begin{enumerate}
 \item[(i)]
 $D$ is a derivation. 
 \item[(ii)]
 $D=D_1+D_2$ where $D_1((a,x))=(\delta_1(a)+\tau_1(x),\tau_2(x))$ and $D_{2}((a,x))=(0,\delta_2(a))$ are derivations on $T(\A,\U)$.
 \item[(iii)]
 $\delta_1:\A\rightarrow \A$, $\delta_2:\A\rightarrow \U$ are derivations, $\tau_{2}:\U\rightarrow \U$ is a linear map such that 
 \[\tau_2 (ax)=a\tau_2 (x)+\delta_2 (a)x\quad \text{and} \quad \tau_2 (xa)=\tau_2 (x)a+x\delta_2(a)\quad \quad (a\in\A,x\in\U).\] 
 and $\tau_1:\U\rightarrow \A$ is an $\A$-bimodule homomorphism such that $x\tau_1(y)+\tau_1(x)y=0$ $ (x,y\in\U)$. \\
 Moreover, $D$ is an inner derivation if and only if $\delta_1 ,\delta_2$ are inner derivations, $\tau_1 =0$ and if $\delta_1 =ad_{a}$ and $ \delta_2 =ad_{x}$, then $\tau_2 =r_{a}$.
 \end{enumerate}
 \end{prop}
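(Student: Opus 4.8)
The plan is to derive Proposition \ref{ttd} directly as a specialization of Theorem \ref{asll} and Corollary \ref{tak} to the case $\U^2=(0)$, exploiting the two simplifications that this hypothesis produces, namely $N^1(\U)=I(\U)=(0)$ and $ann_\U\U=\U$. Since the three listed conditions are to be shown equivalent, I would organize the argument as a cycle, establishing $(i)\Leftrightarrow(iii)$ first (this is where the substantive content sits) and then $(i)\Leftrightarrow(ii)$ (which is essentially a bookkeeping decomposition).

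For $(i)\Leftrightarrow(iii)$, I would invoke Theorem \ref{asll}, which already characterizes derivations $D$ on $\A\ltimes\U$ by the four conditions (a)--(d) on $\delta_1,\delta_2,\tau_1,\tau_2$. The point is simply to read off what these conditions become when $\U^2=(0)$. Conditions (a) and (b) are unchanged: $\delta_1$ is a derivation on $\A$ and $\delta_2:\A\to\U$ is a derivation. In condition (c) the constraint $\tau_1(xy)=0$ is automatic (since $xy=0$), so $\tau_1$ is just an $\A$-bimodule homomorphism; however, the separate relation $x\tau_1(y)+\tau_1(x)y=0$ for all $x,y\in\U$ does \emph{not} follow from $\U^2=(0)$ and must be carried along --- it comes from the third equation of condition (d), which when $\tau_2(xy)=0$ reduces exactly to $x\tau_1(y)+\tau_1(x)y=0$. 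Finally, in condition (d) the first two module-compatibility identities for $\tau_2$ survive verbatim, giving the displayed relations in $(iii)$. Thus $(iii)$ is precisely the transcription of Theorem \ref{asll}(ii) under $\U^2=(0)$, and the equivalence $(i)\Leftrightarrow(iii)$ follows.

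For $(i)\Leftrightarrow(ii)$, I would appeal to Corollary \ref{tak}. Writing $D_2((a,x))=(0,\delta_2(a))$, part $(ii)$ of that corollary shows $D_2$ is a derivation precisely when $\delta_2$ is a derivation with $\delta_2(\A)\subseteq ann_\U\U$; but here $ann_\U\U=\U$, so this imposes no extra condition beyond $\delta_2$ being a derivation. The complementary map $D_1=D-D_2$ is then a derivation as well, and one checks it is exactly the map $(a,x)\mapsto(\delta_1(a)+\tau_1(x),\tau_2(x))$ of the form covered by the relevant parts of Corollary \ref{tak}; summing gives $D=D_1+D_2$. Conversely if $D_1,D_2$ of the stated forms are derivations then so is their sum. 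The inner-derivation clause is read off from the last paragraph of Theorem \ref{asll}: the general condition $\tau_2=id_{\U,x_0}+r_{a_0}$ collapses to $\tau_2=r_{a_0}$ because $\U^2=(0)$ forces $id_{\U,x_0}=0$, which is exactly the observation $I(\U)=(0)$ recorded before Section 3.

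I do not expect any genuine obstacle, as every ingredient is already proved; the only point requiring care is resisting the temptation to conclude $x\tau_1(y)+\tau_1(x)y=0$ from $\U^2=(0)$. That relation is an independent consequence of the full derivation identity and must be extracted from condition (d), not from the triviality of the product on $\U$.
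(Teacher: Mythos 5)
Your proposal is correct and is essentially the paper's own argument: the authors give no separate proof of Proposition \ref{ttd}, obtaining it exactly as you do by specializing Theorem \ref{asll} and Corollary \ref{tak} to the case $\U^2=(0)$ (using $ann_{\U}\U=\U$, so that $D_2((a,x))=(0,\delta_2(a))$ is automatically a derivation, and $I(\U)=(0)$ for the inner-ness clause), and your insistence that $x\tau_1(y)+\tau_1(x)y=0$ must be extracted from the third identity of condition (d) rather than from triviality of the product on $\U$ is exactly the right point of care. One discrepancy worth flagging: specializing the first two identities of (d) yields $\tau_2(ax)=a\tau_2(x)+\delta_1(a)x$, since the term $\delta_2(a)x$ is a product of two elements of $\U$ and hence vanishes, whereas item (iii) of the statement displays $\delta_2(a)x$; this is a typo in the paper (compare Proposition 2.2 of \cite{med}), so your assertion that the identities ``survive verbatim, giving the displayed relations'' is only true after correcting $\delta_2$ to $\delta_1$ there.
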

  Proposition 2.2 of \cite{med} is a consequence of this proposition. 
\begin{rem} \label{r0} 
 By Proposition \ref{ttd}, the linear map $\delta:\A\rightarrow\U$ is a derivation if and only if the linear map $D((a,x))=(0,\delta (a))$ on $T(\A,\U)$ is a derivation. So any derivation $\delta:\A\rightarrow\U$ is continuous, if every derivation on $T(\A,\U)$ is continuous.
 \end{rem} 
 \begin{prop}\label{tau}
Suppose that there are $\A$-bimodule homomorphisms $\phi:\A\rightarrow\U$ and $\psi:\U\rightarrow\A$ such that $\phi o\psi = I_{\U}$ ($I_\U$ is the identity map on $\U$). If every derivation on $T(\A,\U)$ is continuous, then every derivation on $\A$ is continuous.
\end{prop}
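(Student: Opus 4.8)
The plan is to show that an arbitrary derivation $d$ on $\A$ is continuous by manufacturing, out of $d$ together with the two module homomorphisms, a derivation $D$ on $T(\A,\U)$ whose $\A$-to-$\A$ component is exactly $d$. Since $T(\A,\U)=\A\ltimes\U$ is the module extension with $\U^{2}=(0)$ (Example \ref{me}), continuity of $D$ --- granted by hypothesis --- will immediately force continuity of $d$.

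First I would fix an arbitrary derivation $d:\A\to\A$ and set $\tau_2:=\phi\circ d\circ\psi:\U\to\U$. The candidate is the map $D:T(\A,\U)\to T(\A,\U)$ given by $D((a,x))=(d(a),\tau_2(x))$; in the notation of Theorem \ref{asll} this is the datum $(\delta_1,\delta_2,\tau_1,\tau_2)=(d,0,0,\phi\circ d\circ\psi)$. The key step is to verify that $D$ is a derivation. By Theorem \ref{asll} (in the module-extension form of Proposition \ref{ttd}, where $\U^{2}=(0)$) it suffices to check that $\tau_2$ is linear and satisfies $\tau_2(ax)=a\tau_2(x)+d(a)x$ and $\tau_2(xa)=\tau_2(x)a+xd(a)$ for $a\in\A$, $x\in\U$; the conditions involving $\tau_1$ and $\delta_2$ are vacuous because $\tau_1=0$, $\delta_2=0$ and $\U^{2}=(0)$. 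For the first identity I would compute, using successively that $\psi$ is a left $\A$-module homomorphism, that $d$ is a derivation, that $\phi$ is an $\A$-module homomorphism, and finally that $\phi\circ\psi=I_\U$:
\begin{align*}
\tau_2(ax)&=\phi\bigl(d(a\psi(x))\bigr)=\phi\bigl(d(a)\psi(x)+a\,d(\psi(x))\bigr)\\
&=d(a)\phi(\psi(x))+a\,\phi\bigl(d(\psi(x))\bigr)=d(a)x+a\tau_2(x).
\end{align*}
The identity $\tau_2(xa)=\tau_2(x)a+xd(a)$ is obtained symmetrically from the right-module properties of $\phi$ and $\psi$. Hence $D$ is a derivation on $T(\A,\U)$.

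Finally, by hypothesis $D$ is continuous. Since the embedding $a\mapsto(a,0)$ of $\A$ into $T(\A,\U)$ and the coordinate projection of $T(\A,\U)$ onto its first factor are both continuous, and $D((a,0))=(d(a),0)$, it follows that $d$ is continuous, as desired. I expect the only real work to be the verification that $\tau_2=\phi\circ d\circ\psi$ satisfies the two module-compatibility identities: this is the single point at which the hypothesis $\phi\circ\psi=I_\U$ and the bimodule-homomorphism properties of $\phi$ and $\psi$ are used together, and it is the crux of the argument. Everything else --- the triviality of the conditions forced by $\U^{2}=(0)$ and $\tau_1=0$, and the passage from continuity of $D$ to continuity of $d$ --- is routine.
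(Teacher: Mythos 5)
Your proposal is correct and follows essentially the same route as the paper: the authors also set $\tau=\phi\circ\delta\circ\psi$, verify the two module-compatibility identities $\tau(ax)=a\tau(x)+\delta(a)x$ and $\tau(xa)=\tau(x)a+x\delta(a)$, and conclude that $D((a,x))=(\delta(a),\tau(x))$ is a derivation on $T(\A,\U)$ whose continuity forces that of $\delta$. Your write-up merely makes explicit the computation that the paper leaves implicit.
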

\begin{proof}
Let $\delta$ be a derivation on $\A$. Define the map $\tau:\U\rightarrow\U$ by $\tau =\phi o\delta o\psi$.
Then for every $a\in\A,x\in\U$,
\[\tau(ax)=a\tau(x)+\delta(a)x\quad \text{and} \quad \tau(xa)=\tau(x)a+x\delta(a).\]
So the map $D:T(\A,\U)\rightarrow T(\A,\U)$ defined by $D((a,x))=(\delta(a),\tau(x))$ is a derivation which is continuous by the hypothesis. Thus $\delta$ is continuous.
\end{proof}
In the previous proposition the assumption of existence of $\A$-bimodule homomorphisms $\phi:\A\rightarrow\U$ and $\psi:\U\rightarrow\A$ with $\phi o \psi =I_{\U}$ is equivalent to that there exists a subbimodule $\mathcal{V}$ of $\A$ such that $\A=\U\oplus \mathcal{V}$ as $\A$-bimodules direct sum. (In fact in this case, $\U$ and $\mathcal{V}$ are ideals of $\A$). 
 \par 
Since for every derivation $\delta:\A\rightarrow\U$, $\delta(\A)\subseteq ann_{\U}\U$, in the stated cases in Proposition \ref{taj2}, one can express any derivation $D$ on $T(\A,\U)$ as the sum of two derivations one of which being continuous. Also the Proposition \ref{au1}-$(i)$ holds in the case of module extension Banach algebras. 
\begin{rem}\label{r1}
If $\A$ is a Banach algebra and $\mathcal{I}$ is a closed ideal on it, then $\frac{\A}{\mathcal{I}}$ is a Banach $\A$-bimodule and so we can consider $T(\A, \frac{\A}{\mathcal{I}})$. Suppose that $\A$ possesses a bounded right (or left) approximate identity and every derivation on $\A$ and every derivation from $\A$ to $\frac{\A}{\mathcal{I}}$ is continuous. Let $D$ be a derivation on $T(\A, \frac{\A}{\mathcal{I}})$ which has a structure as in Proposition \ref{ttd}. Then $\tau_1:\frac{\A}{\mathcal{I}}\rightarrow \A$ is an $\A$-bimodule homomorphism. Since $\A$ has a bounded right(left) approximate identity, then so does $\frac{\A}{\mathcal{I}}$. Hence $\tau_1$ is continuous. Now from Proposition \ref{taj2}-$(ii)$ it follows that $D$ is continuous. Hence in this case any derivation on $T(\A, \frac{\A}{\mathcal{I}})$ is continuous.
\end{rem}
\begin{rem}\label{r2}
If $\mathcal{I}$ is a closed ideal in a Banach algebra $\A$ and $\delta:\A\rightarrow\A$ is a derivation such that $\delta(\mathcal{I})\subseteq \mathcal{I}$, then the map $\tau: \frac{\A}{\mathcal{I}}\rightarrow \frac{\A}{\mathcal{I}}$ defined by $\tau(a+\mathcal{I})=\delta(a)+\mathcal{I}$ is well-defined and linear and 
\[\tau (a(x+\mathcal{I}))=a\tau (x+\mathcal{I})+\delta (a)(x+\mathcal{I})\quad,\quad \tau((x+\mathcal{I})a)=\tau(x+\mathcal{I})a+(x+\mathcal{I})\delta(a).\]
 Therefore the map $D$ on  $T(\A, \frac{\A}{\mathcal{I}})$ defined by $D((a,x))=(\delta (a),\tau(x+\mathcal{I}))$ is a derivation. So if every derivation on  $T(\A, \frac{\A}{\mathcal{I}})$ is continuous, then every derivation $\delta:\A\rightarrow \A $ with $\delta(\mathcal{I})\subseteq \mathcal{I}$ is continuous.
\end{rem}
 In Remarks \ref{r1} and \ref{r2}, if we let $\mathcal{I}=(0)$, then we have the following corollary.
 \begin{cor}
  Let $\A$ be a Banach algebra.
\begin{enumerate}
\item[(i)] If $\A$ has a right (left) approximate identity and every derivation on $\A$ is continuous, then any derivation on $T(\A, \A)$ is continuous. 
\item[(ii)] If every derivation on $T(\A, \A)$ is continuous, then any derivation on $\A$ is continuous.
\end{enumerate} 
 \end{cor}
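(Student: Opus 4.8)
The plan is to obtain both parts as the special case $\mathcal{I}=(0)$ of Remarks \ref{r1} and \ref{r2}, once I identify $\A/(0)$ with $\A$ as a Banach $\A$-bimodule. Under this identification $T(\A,\A/\mathcal{I})=T(\A,\A)$, and a derivation from $\A$ into the bimodule $\A/(0)=\A$ is exactly a derivation on the algebra $\A$, since the defining identity $\delta(ab)=a\delta(b)+\delta(a)b$ is the same in both readings.

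For part (i), I would invoke Remark \ref{r1} with $\mathcal{I}=(0)$. The two continuity hypotheses appearing there—that every derivation on $\A$ and every derivation from $\A$ into $\A/\mathcal{I}$ be continuous—collapse into the single hypothesis that every derivation on $\A$ is continuous, by the identification above. Together with the bounded approximate identity assumption, this is precisely the hypothesis of Remark \ref{r1}, so its conclusion that every derivation on $T(\A,\A)$ is continuous transfers verbatim. For part (ii), I would invoke Remark \ref{r2} with $\mathcal{I}=(0)$. The side condition $\delta(\mathcal{I})\subseteq\mathcal{I}$ imposed there becomes vacuous, so every derivation $\delta:\A\rightarrow\A$ extends to a derivation $D((a,x))=(\delta(a),\tau(x))$ on $T(\A,\A)$ whose first coordinate is $\delta$. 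If every derivation on $T(\A,\A)$ is continuous, each such $D$ is continuous, and hence so is its first-coordinate component $\delta$; this is exactly the assertion of (ii).

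I expect no genuine obstacle: the proof is a specialization, and the only points demanding care are the bookkeeping that $\A/(0)\cong\A$ as bimodules (so the module-valued derivations of the remarks become ordinary derivations on $\A$) and the observation that the constraint $\delta(\mathcal{I})\subseteq\mathcal{I}$ in Remark \ref{r2} degenerates to no constraint when $\mathcal{I}=(0)$. One minor discrepancy worth flagging is that Remark \ref{r1} explicitly assumes a \emph{bounded} approximate identity, whereas the statement of (i) drops the word ``bounded''; I would read (i) as requiring boundedness, consistent with the Cohen-factorization argument underlying the continuity of $\tau_1$ in Remark \ref{r1} and Proposition \ref{taj2}-$(ii)$.
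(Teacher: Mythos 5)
Your proposal is correct and follows exactly the paper's route: the corollary is stated as the specialization of Remarks \ref{r1} and \ref{r2} to $\mathcal{I}=(0)$, with the identification $\A/(0)\cong\A$ collapsing the two continuity hypotheses of Remark \ref{r1} into one and rendering the condition $\delta(\mathcal{I})\subseteq\mathcal{I}$ vacuous in Remark \ref{r2}. Your flag about the missing word ``bounded'' in part (i) is also well taken, since the underlying Cohen-factorization argument does require boundedness.
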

The part (ii) of this corollary could also be derived from Remark \ref{r0} or Proposition \ref{tau}.
\par 
In continue we give some results of Proposition \ref{taj2} in the case of module extension Banach algebras.
\begin{cor}\label{trs}
Let $\A$ be a semisimple Banach algebra which has a bounded approximate identity and $\U$ be a Banach $\A$-bimodule with $ann_{\A}\U=(0)$. Suppose that there exists a surjective left $\A$-module homomorphism $\phi:\A\rightarrow\U$ and every derivation from $\A$ into $\U$ is continuous, then every derivation on $T(\A,\U)$ is continuous.
\end{cor}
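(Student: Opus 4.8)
The plan is to reduce everything to Proposition \ref{taj2}-(ii), after separately establishing the continuity of the two diagonal-to-off-diagonal components $\delta_1$ and $\tau_1$. First I would take an arbitrary derivation $D$ on $T(\A,\U)$ and write it in the canonical form of Proposition \ref{ttd}, namely $D((a,x))=(\delta_1(a)+\tau_1(x),\delta_2(a)+\tau_2(x))$, where $\delta_1:\A\to\A$ is a derivation, $\delta_2:\A\to\U$ is a derivation, $\tau_1:\U\to\A$ is an $\A$-bimodule homomorphism, and $\tau_2:\U\to\U$ satisfies the mixed Leibniz identities. Since $\U^2=(0)$ in the module extension case, we automatically have $\delta_2(\A)\subseteq ann_{\U}\U=\U$, so the standing hypothesis $\delta_2(\A)\subseteq ann_{\U}\U$ of Proposition \ref{taj2} is in force, and continuity of $D$ will amount to continuity of all four maps.

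Next I would dispose of $\delta_1$ and $\delta_2$. Because $\A$ is semisimple, the Johnson--Sinclair theorem \cite{john1} makes $\delta_1$ automatically continuous, while $\delta_2$ is continuous directly by the hypothesis that every derivation from $\A$ into $\U$ is continuous.

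The key step is the continuity of $\tau_1:\U\to\A$, and the hard part will be that the only structural bridge from $\U$ back to $\A$ is the given $\phi$. I would exploit the surjective left $\A$-module homomorphism $\phi:\A\to\U$ as follows. Since $\A$ has a bounded approximate identity, the Cohen factorization argument used in the proof of Proposition \ref{taj2} shows that $\phi$ itself is continuous and, more importantly, that the composite $\tau_1\circ\phi:\A\to\A$, being again a left $\A$-module homomorphism on $\A$, is continuous. Then I would invoke Lemma \ref{da}-(i) with $R=\phi$ (continuous and surjective) and $T=\tau_1$ to conclude that $\C(\tau_1)=\C(\tau_1\circ\phi)=(0)$, whence $\tau_1$ is continuous. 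Turning the module homomorphism $\tau_1$, defined on $\U$, into one defined on $\A$, where Cohen factorization applies, is precisely the device that makes its continuity accessible.

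Finally I would assemble the pieces. With $\delta_1$ and $\tau_1$ now continuous, $\A$ possessing a bounded (right) approximate identity, and $\phi$ a surjective left $\A$-module homomorphism, condition (ii) of Proposition \ref{taj2} applies and yields that $D_1((a,x))=(\delta_1(a)+\tau_1(x),\tau_2(x))$ is a continuous derivation; in particular $\tau_2$ is continuous. Since $\delta_2$ is continuous as well, the derivation $D_2((a,x))=(0,\delta_2(a))$ is continuous, and therefore $D=D_1+D_2$ is continuous, which is exactly the claim.
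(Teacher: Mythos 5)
Your proof is correct, and its overall skeleton (canonical decomposition from Proposition \ref{ttd}, Johnson--Sinclair for $\delta_1$, the hypothesis for $\delta_2$, and Proposition \ref{taj2}-(ii) to extract the continuity of $\tau_2$ and hence of $D_1$) matches the paper's. Where you genuinely diverge is in the treatment of $\tau_1$. The paper gets its continuity from Remark \ref{taj11}: once $\tau_2$ is known to be continuous, Theorem \ref{joda}-(i) gives $\C(\tau_1)\subseteq ann_{\A}\U=(0)$, so this is exactly where the hypothesis $ann_{\A}\U=(0)$ enters. You instead observe that $\tau_1\circ\phi:\A\to\A$ is a left $\A$-module homomorphism, hence continuous by Cohen factorization, and then use Lemma \ref{da}-(i) with the continuous surjection $\phi$ to conclude $\C(\tau_1)=\C(\tau_1\circ\phi)=(0)$. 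This is a valid argument (it is the same device the paper uses for $\tau_2$ inside the proof of Proposition \ref{taj2}-(ii), transplanted to $\tau_1$), and it has the pleasant side effect that your proof never uses $ann_{\A}\U=(0)$ at all: the surjectivity of $\phi$ together with the bounded approximate identity already forces $\tau_1$ to be continuous. So your route establishes the corollary under strictly weaker hypotheses, at the modest cost of one extra application of Cohen factorization and Lemma \ref{da}-(i); the paper's route is shorter given that Remark \ref{taj11} has already been recorded.
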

\begin{proof}
Let $D$ be a derivation on $T(\A,\U)$ which has a structure as in Proposition \ref{ttd}. Since $\A$ is semisimple, every derivation from $\A$ into $\A$ is continuous. Now by Proposition \ref{taj2}-$(ii)$ and Remark \ref{taj11}, it follows that every derivation on $T(\A,\U)$ is continuous.
\end{proof}
Ringrose in \cite{rin} proved that every derivation from a $C^{*}$-algebra into a Banach bimodule is continuous. So we have the following example which satisfies the conditions of the above corrolary.
\begin{exm}
Let $\A$ be a $C^{*}$-algebra and $\U$ be a Banach $\A$-bimodule with $ann_{\A}\U=(0)$. Suppose that there exists a surjective left $\A$-module homomorphism $\phi:\A\rightarrow\U$. Hence $\A$ is a semisimple Banach algebra with a bounded approximate identity. So by \cite{rin} and Corrolary \ref{trs}, any derivation on $T(\A,\U)$ is continuous. 
\end{exm}
An element $p$ in an algebra $\A$ is called an \textit{idempotent} if $p^{2}=p$.
\begin{cor}
Let $\A$ be a prime Banach algebra with a non-trivial idempotent
$p$ (i.e. $p \neq 0$) such that $\A p$ is finite dimensional. Then every derivation
on $\A$ is continuous.
\end{cor}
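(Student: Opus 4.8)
The plan is to prove that the separating space $\C(\delta)$ of an arbitrary derivation $\delta\colon\A\to\A$ vanishes; by the closed graph theorem recalled in the introduction this is exactly the continuity of $\delta$. The whole argument will be driven by the right multiplication $R_p\colon\A\to\A$, $a\mapsto ap$, together with the hypothesis that $\A p$ is finite dimensional, which forces every linear map defined on $\A p$ to be continuous. The target is to first establish $\C(\delta)\,p=(0)$ and then to upgrade this to $\C(\delta)=(0)$ using primeness.

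First I would show that the map $a\mapsto\delta(a)p$ is continuous. The key identity is $\delta(a)p=\delta(ap)-a\,\delta(p)$, where the term $a\mapsto a\,\delta(p)$ is continuous, being right multiplication by the fixed element $\delta(p)$. For the remaining term, $R_p$ is continuous with range $\A p$, and since $\A p$ is finite dimensional the restriction $\delta|_{\A p}\colon\A p\to\A$ is automatically continuous; hence $a\mapsto\delta(ap)=(\delta|_{\A p})\circ R_p(a)$ is a composition of continuous maps, so it is continuous. Consequently $R_p\circ\delta$ is continuous, and applying Lemma \ref{da}-(ii) with $T=\delta$ and $S=R_p$ gives $R_p(\C(\delta))=(0)$, that is, $\C(\delta)\,p=(0)$.

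Next I would invoke the standard fact that, for a derivation $\delta$, the separating space $\C(\delta)$ is a closed two-sided ideal of $\A$: if $x_n\to 0$ and $\delta(x_n)\to y$, then for each $b\in\A$ the Leibniz rule gives $\delta(bx_n)=b\,\delta(x_n)+\delta(b)x_n\to by$ while $bx_n\to 0$, so $by\in\C(\delta)$, and symmetrically $yb\in\C(\delta)$. In particular $\C(\delta)$ is a right ideal, so $\C(\delta)\,\A\,p\subseteq\C(\delta)\,p=(0)$, i.e. $x\,\A\,p=(0)$ for every $x\in\C(\delta)$. Since $p=p^{2}\in\A p$ and $p\neq 0$, the primeness of $\A$ applied to $x\,\A\,p=(0)$ forces $x=0$ for each such $x$. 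Hence $\C(\delta)=(0)$ and $\delta$ is continuous.

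I expect the only genuine subtlety to be the first step, namely recognizing that multiplying $\delta$ on the right by $p$ produces a map whose continuity can be read off directly from the finite dimensionality of $\A p$. Once $\C(\delta)\,p=(0)$ is in hand, the passage to $\C(\delta)=(0)$ is a routine combination of the ideal property of the separating space with the defining property of a prime algebra.
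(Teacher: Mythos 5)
Your proof is correct, but it reaches the conclusion by a genuinely different and more direct route than the paper. The paper embeds the problem into its semidirect-product machinery: it makes $\U:=\A p$ a Banach $\A$-bimodule with trivial right action, defines $\tau(ap)=\delta(ap)p$ on the finite-dimensional module $\U$ (hence $\tau$ is continuous), packages $\delta$ and $\tau$ into a derivation $D((a,x))=(\delta(a),\tau(x))$ on the module extension $T(\A,\U)$ via Proposition \ref{ttd}, and then invokes Proposition \ref{taj2}-$(i)$ (whose hypotheses $ann_{\A}\U=(0)$ and continuity of $\tau_2$ hold by primeness and finite-dimensionality) to get continuity of $D$ and hence of $\delta$. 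You bypass all of that: you show $a\mapsto\delta(a)p$ is continuous from the identity $\delta(a)p=\delta(ap)-a\delta(p)$ together with the automatic continuity of $\delta|_{\A p}$ on the finite-dimensional space $\A p$, apply Lemma \ref{da}-$(ii)$ to get $\C(\delta)p=(0)$, and then use the standard fact that $\C(\delta)$ is a (right) ideal to obtain $x\A p\subseteq\C(\delta)p=(0)$ for $x\in\C(\delta)$, whence primeness forces $\C(\delta)=(0)$. The underlying mechanism is ultimately the same --- both arguments show that the separating space of $\delta$ annihilates $\A p$ and then kill it by primeness (the paper does this inside Theorem \ref{joda}-$(iii)$) --- but your version is self-contained, shorter, and makes the role of each hypothesis transparent, whereas the paper's version serves to illustrate how the corollary falls out of its general framework for derivations on $\A\ltimes\U$. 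All the individual steps you give (the Leibniz rearrangement, the use of Lemma \ref{da}-$(ii)$, the ideal property of $\C(\delta)$, and the primeness step with $p\neq 0$) check out.
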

\begin{proof}
Let $\U:=\A p$. Then $\U$ is a closed left ideal in $\A$. By the following make $\U$ into a Banach
$\A$-bimodule:
\[ xa=0 \quad \quad (x\in \A p, a\in \A), \]
and the left multiplication is the usual multiplication of $\A$. So we can consider $T(\A,\U)$ in this case. Since $\A$ is prime, it follows that $ann_{\A}\U=(0)$. Let $\delta:\A \rightarrow \A$ be a derivation. Define the map $\tau:\U \rightarrow \U$ by $\tau(ap)=\delta(ap)p$ $(a\in \A)$. The map $\tau$ is well-defined and linear. Also
\[\tau (ax)=a\tau (x)+\delta (a)x\quad \text{and} \quad \tau (xa)=\tau (x)a+x\delta(a)\quad \quad (a\in\A,x\in\U).\] 
Since $\U$ is finite dimensional, it follows that $\tau$ is continuous. By Proposition \ref{ttd}, the mapping $D:T(\A,\U)\rightarrow T(\A,\U)$ defined by $D((a,x))=(\delta(a),\tau(x))$ is a derivation. Now for $D$ the conditions of Proposition \ref{taj2}-$(i)$ hold and hence $D$ is continuous. Therefore $\delta$ is continuous.
\end{proof}
 Now we investigate the first cohomology group of $T(\A ,\U)$. 
 \par 
 In module extension $T(\A ,\U)$ since $\U ^{2}=(0)$, for every derivation $\delta:\A\rightarrow\U$ on $T(\A ,\U)$ we have $\delta (\A)\subseteq ann_{\U}\U$ and also $Z^{1}(\U)=\mathbb{B}(\U)$ and $N^{1}(\U)=(0)$, we may conclude the following proposition from Theorem \ref{22}. 
 \begin{prop}\label{cte}
Consider the module extension $T(\A,\U)$ of a Banach algebra $\A$ and a Banach $\A$-bimodule $\U$. Suppose that $H^{1}(\A)=(0)$ and the only continuous $\A$-bimodule homomorphism $T:\U\rightarrow\A$ satisfying $T(x)y+xT(y)=0$ is $T=0$. Then 
\[H^{1}(T(\A ,\U))\cong H^{1}(\A ,\U)\times \frac{Hom_{\A}(\U)}{C_{\A}(\U)}.\]
 \end{prop}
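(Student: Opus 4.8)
The plan is to realize $T(\A,\U)$ as the special case $\A\ltimes\U$ with $\U^2=(0)$ and then invoke Theorem \ref{22}, simplifying every ingredient by means of the triviality of the product on $\U$. First I would verify the two hypotheses that Theorem \ref{22} requires. Since $\U^2=(0)$ we have $ann_\U\U=\U$, so the demand $\delta(\A)\subseteq ann_\U\U$ is met automatically for every $\delta\in Z^{1}(\A,\U)$; together with the assumed $H^{1}(\A)=(0)$ this is exactly the input of Theorem \ref{22}. The running convention of Section 4, that $\tau_1=0$ for every continuous derivation, is what the extra hypothesis supplies: by Proposition \ref{ttd}(iii) the off-diagonal component $\tau_1\colon\U\to\A$ of any continuous derivation on $T(\A,\U)$ is a continuous $\A$-bimodule homomorphism satisfying $x\tau_1(y)+\tau_1(x)y=0$, and the assumption forces every such map to vanish. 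Hence each continuous derivation on $T(\A,\U)$ has the diagonal form presupposed in Section 4.

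With these in place, Theorem \ref{22} yields
\[H^{1}(T(\A,\U))\cong\frac{Z^{1}(\A,\U)\times[Hom_{\A}(\U)\cap Z^{1}(\U)]}{\mathcal{F}}.\]
Next I would collapse the factors and the subspace $\mathcal{F}$ using $\U^2=(0)$. Because every continuous linear map on $\U$ is a derivation when $\U^2=(0)$, one has $Z^{1}(\U)=\mathbb{B}(\U)$, so $Hom_{\A}(\U)\cap Z^{1}(\U)=Hom_{\A}(\U)$. Likewise $id_{\U,x}=0$ for every $x\in\U$, so in a typical generator $(id_{\A,x},r_{a}+id_{\U,x})$ of $\mathcal{F}$ the second coordinate reduces to $r_{a}$.

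The decisive observation — the step I expect to carry the argument — is that this reduction decouples the two coordinates defining $\mathcal{F}$. In the general form of Theorem \ref{22} the same vector $x$ occurs in both entries, binding them together; once $id_{\U,x}=0$ the second entry depends only on $a$ (with $id_{a}=0$) while the first depends only on $x$, and the two parameters now range independently. Consequently
\[\mathcal{F}=\{(id_{\A,x},r_{a})\mid x\in\U,\ id_{a}=0\}=N^{1}(\A,\U)\times C_{\A}(\U),\]
where $N^{1}(\A,\U)=\{id_{\A,x}\mid x\in\U\}$, $C_{\A}(\U)=\{r_{a}\mid id_{a}=0\}$, and $C_{\A}(\U)\subseteq Hom_{\A}(\U)$. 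Finally I would apply the elementary identity $(V_{1}\times V_{2})/(W_{1}\times W_{2})\cong(V_{1}/W_{1})\times(V_{2}/W_{2})$ with $V_{1}=Z^{1}(\A,\U)$, $W_{1}=N^{1}(\A,\U)$, $V_{2}=Hom_{\A}(\U)$, $W_{2}=C_{\A}(\U)$, giving
\[H^{1}(T(\A,\U))\cong\frac{Z^{1}(\A,\U)}{N^{1}(\A,\U)}\times\frac{Hom_{\A}(\U)}{C_{\A}(\U)}=H^{1}(\A,\U)\times\frac{Hom_{\A}(\U)}{C_{\A}(\U)},\]
which is the asserted isomorphism.
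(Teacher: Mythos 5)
Your proposal is correct and follows essentially the same route as the paper: the authors also deduce Proposition \ref{cte} from Theorem \ref{22}, noting that $\U^{2}=(0)$ gives $ann_{\U}\U=\U$, $Z^{1}(\U)=\mathbb{B}(\U)$ and $id_{\U,x}=0$, that the homomorphism hypothesis forces $\tau_1=0$, and that $\mathcal{F}$ collapses to $N^{1}(\A,\U)\times C_{\A}(\U)$. Your explicit justification of the decoupling of the two parameters in $\mathcal{F}$ is a detail the paper leaves implicit, but the argument is the same.
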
 
In fact with the assumptions of the previous proposition, in Theorem \ref{22} we have $\mathcal{F}=N^{1}(\A)\times C_{\A}(\U)$. Proposition \ref{cte} is the same as Theorem 2.5 of \cite{med}. So it can be said that Theorem \ref{22} is a generalization of Theorem 2.5 in \cite{med}. 
\begin{rem}
 \begin{enumerate}
 \item[(i)]
 If $\U$ is a closed ideal of a Banach algebra $\A$ such that $\overline{\U^{2}}=\U$, then for any continuous $\A$-bimodule homomorphism $T:\U\rightarrow\A$ with  $T(x)y+xT(y)=0\, (x,y\in\U)$ we have $T(xy)=0$. Since $\overline{\U^{2}}=\U$ and $T$ is continuous, it follows that $T=0$. Hence in this case, for any $\delta\in Z^{1}(T(\A ,\U))$ in its representation, $\tau_1=0$. Also in this case, for any $r_a \in C_{\A}(\U)$, since $a\in Z(\A)$, then $r_{a}=0$. Therefore if $H^{1}(\A)=(0)$, in this case by Proposition \ref{22} we have 
 \[H^{1}(T(\A ,\U))\cong H^{1}(\A,\U)\times Hom_{\A}(\U).\]
 \item[(ii)]
 If Banach algebra $\A$ has no nonzero nilpotent elements, $\U$ is a Banach $\A$-bimodule and $T:\U\rightarrow\A$ is an $\A$-bimodule homomorphism such that  $T(x)y+xT(y)=0\, (x,y\in\U)$, then $2T(x)T(y)=0\, (x,y\in \U)$. Thus for any $x\in \U$, $T(x)^{2}=0$ and by the hypothesis $T(x)=0$. So in this case for any derivation on $T(\A ,\U)$, in its structure by Proposition \ref{ttd} we have $\tau_1 =0$.
\end{enumerate}
 \end{rem} 
 \begin{exm}
 If $\A$ is a weakly amenable commutative Banach algebra and $\U$ is a commutative Banach $\A$-bimodule such that for any continuous $\A$-bimodule homomorphism $T:\U\rightarrow\A$ with  $T(x)y+xT(y)=0\, (x,y\in\U)$ implies that $T=0$, then $H^{1}(\A)=(0)$, $H^{1}(\A,\U)=(0)$ and $C_{\A}(\U)=(0)$ and thus 
 \[H^{1}(T(\A ,\U))\cong  Hom_{\A}(\U).\]
 \end{exm}
Various examples of the trivial extension of Banach algebras and computing their first cohomology group are given in \cite{med}. 
\par 
If $\delta\in Z^{1}(\A,\U)$, then the map $D_{\delta}:T(\A,\U)\rightarrow T(\A,\U)$ defined by $D_{\delta}((a,x))=(0,\delta(a))$ is a continuous derivation. Now we may define the linear map $\Phi :Z^{1}(\A,\U)\rightarrow H^{1}(T(\A ,\U))$ by $\Phi (\delta)=[D_{\delta}]$. By noting that $\delta$ is inner if and only if $D_{\delta}$ is inner (by Corollary \ref{tak}), it can be seen that $ker\Phi= N^{1}(T(\A ,\U))$. Thus $H^{1}(\A ,\U)$ is isomorphic to some subspace of  $H^{1}(T(\A ,\U))$. So we have the following corollary. 
 \begin{cor}
 Let $\A$ be a Banach algebra and $\U$ be a Banach $\A$-bimodule. Then there is a linear isomorphism from  
 $H^{1}(\A ,\U)$ onto a subspace of $H^{1}(T(\A ,\U))$.
 \end{cor}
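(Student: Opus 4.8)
The plan is to exhibit $H^{1}(\A,\U)$ as an isomorphic copy of a subspace of $H^{1}(T(\A,\U))$ by factoring a natural map through the first isomorphism theorem for vector spaces. First I would attach to each $\delta\in Z^{1}(\A,\U)$ the map $D_{\delta}:T(\A,\U)\rightarrow T(\A,\U)$ defined by $D_{\delta}((a,x))=(0,\delta(a))$. Since $\U^{2}=(0)$ in the module extension $T(\A,\U)$, one has $ann_{\U}\U=\U$, so the condition $\delta(\A)\subseteq ann_{\U}\U$ required by Corollary \ref{tak}-(ii) holds automatically; thus $D_{\delta}$ is a derivation, and it is continuous because $\delta$ is. This yields a well-defined map $\Phi:Z^{1}(\A,\U)\rightarrow H^{1}(T(\A,\U))$, $\Phi(\delta)=[D_{\delta}]$, whose linearity is immediate from $D_{\delta+\lambda\delta'}=D_{\delta}+\lambda D_{\delta'}$ and the linearity of the quotient map onto $H^{1}(T(\A,\U))$.

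The decisive step is to compute $\ker\Phi$ and show it equals $N^{1}(\A,\U)$; equivalently, that $\delta$ is inner if and only if $D_{\delta}$ is inner. For sufficiency, if $\delta=id_{\A,x_0}$, then $\U^{2}=(0)$ forces $x_0\in Z(\U)$ and $ax_0-x_0a\in\U=ann_{\U}\U$ for all $a\in\A$, so the ``moreover'' clause of Corollary \ref{tak}-(ii) gives that $D_{\delta}$ is inner. For necessity I would suppose $D_{\delta}=id_{(a_0,x_0)}$ and expand this inner derivation coordinatewise, obtaining
\[id_{(a_0,x_0)}((a,x))=(id_{a_0}(a),\, id_{\A,x_0}(a)+r_{a_0}(x)+id_{\U,x_0}(x)).\]
Matching with $(0,\delta(a))$ and using $id_{\U,x_0}=0$ on $T(\A,\U)$ (again because $\U^{2}=(0)$) forces $\delta=id_{\A,x_0}$, so $\delta$ is inner. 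This is exactly the point where the triviality of the multiplication on $\U$ inside $T(\A,\U)$ is essential, and I expect it to be the only nonroutine part of the argument.

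Finally, the first isomorphism theorem for linear spaces turns $\Phi$ into an injective linear map
\[\overline{\Phi}:H^{1}(\A,\U)=\frac{Z^{1}(\A,\U)}{N^{1}(\A,\U)}\longrightarrow H^{1}(T(\A,\U)),\qquad \overline{\Phi}([\delta])=[D_{\delta}],\]
so that $\overline{\Phi}$ is a linear isomorphism of $H^{1}(\A,\U)$ onto the subspace $\overline{\Phi}(H^{1}(\A,\U))$ of $H^{1}(T(\A,\U))$, which is precisely the assertion.
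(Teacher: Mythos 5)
Your proof is correct and takes essentially the same route as the paper: the paper also defines $\Phi(\delta)=[D_{\delta}]$ with $D_{\delta}((a,x))=(0,\delta(a))$, observes that $\delta$ is inner if and only if $D_{\delta}$ is inner (via the corollary on derivations of the form $(a,x)\mapsto(0,\delta_2(a))$), and concludes that $\ker\Phi=N^{1}(\A,\U)$. You merely spell out the coordinatewise verification of that equivalence, which the paper leaves implicit.
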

 By the above corollary from $H^{1}(T(\A ,\U))=(0)$ we conclude that $H^{1}(\A ,\U)=(0)$. In particular, if $H^{1}(T(\A ,\A))=(0)$, then $H^{1}(\A)=(0)$. We can also obtain this result from the fact that any derivation $\delta:\A\rightarrow\A$ gives rise to a derivation $D:T(\A,\A)\rightarrow T(\A,\A)$ given by $D((a,x))=(\delta(a),\delta(x))$ (by Remark \ref{r2}). So if $D$ is inner, then $\delta$ is inner.
 %*********************************************************************************
 \subsection*{$\theta$-Lau products of Banach algebras}  
In this subsection we assume that $0\neq \theta\in \Delta (\A)$ and $\U$ is a Banach algebra. By the module action given in Example \ref{la} we turn $\U$ into a Banach $\A$-bimodule with comaptible actions and norm and if it is necessary we show this module by $ \U_{\theta}$. Note that $ann_{\A}\U_{\theta} =ker \theta$. Consider $\A\ltimes\U$ and denote it by $\A\ltimes_{\theta}\U$ which is called $\theta$-Lau product. In the continuation of this section we always consider $\A\ltimes_{\theta}\U$ as just mentioned. 
\par 
The following proposition characterizes the structure of derivations on $\A\ltimes_{\theta}\U$, which is obtained from Theorem \ref{asll}.
 \begin{prop}\label{lau-der}
Let $D:\A\ltimes_{\theta} \U\rightarrow \A \ltimes_{\theta}\U$ be a map. Then the following conditions are equivalent.
\begin{enumerate}
\item[(i)] $D$ is a derivation.
\item[(ii)] 
\[D((a,x))=(\delta_1 (a)+\tau_1 (x),\delta_2 (a)+\tau_2 (x))\quad\quad (a\in \A,x\in \U)\]
such that 
\begin{enumerate}
\item[(a)]
$\delta_1 :\A\rightarrow\A, \delta_2 :\A\rightarrow\U$ are derivations such that 
\[\theta (\delta_1 (a))x+\delta_2(a)x=0\quad \text{and}\quad \theta (\delta_1 (a))x+x\delta_2(a)=0\quad (a\in\A,x\in\U).\]
\item[(b)]
$\tau_1:\U\rightarrow \A$ is an $\A$-bimodule homomorphism such that $\tau_1(xy)=0\quad (x,y\in\U)$.
\item[(c)]
$\tau_2:\U\rightarrow \U$ is a linear map such that 
\[\tau_2(xy)=\theta (\tau_1(y))x+\theta (\tau_1(x))y+x\tau_2(y)+\tau_2(x)y\quad \quad (x,y\in\U).\]
Also $D$ is inner if and only if $\tau_1 =0, \delta_2 =0, \delta_1 =id_{a}$ and $\tau_2 =id_{\U,x}$.
\end{enumerate}
\end{enumerate}
\end{prop}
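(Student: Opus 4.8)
The plan is to derive the statement by specializing the general structure theorem, Theorem~\ref{asll}, to the specific module action $ax=xa=\theta(a)x$ defining $\A\ltimes_{\theta}\U$. First I would apply Theorem~\ref{asll} to write any derivation $D$ as $D((a,x))=(\delta_1(a)+\tau_1(x),\delta_2(a)+\tau_2(x))$, so that $\delta_1,\delta_2$ are derivations, $\tau_1$ is an $\A$-bimodule homomorphism with $\tau_1(xy)=0$, and $\tau_2$ obeys the three identities of part (d) there. The requirement that $\delta_1,\delta_2$ be derivations and condition (b) on $\tau_1$ transfer verbatim, so the entire task reduces to rewriting the three identities governing $\tau_2$ under the Lau action.

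Next I would substitute the concrete actions into each identity, taking care to distinguish a module action (in which an element of $\A$ acts as the scalar $\theta(\cdot)$) from a genuine product inside $\U$. In $\tau_2(ax)=a\tau_2(x)+\delta_1(a)x+\delta_2(a)x$, the left side becomes $\tau_2(\theta(a)x)=\theta(a)\tau_2(x)$, the term $a\tau_2(x)$ becomes $\theta(a)\tau_2(x)$, and $\delta_1(a)x$ becomes $\theta(\delta_1(a))x$ (a module action, since $\delta_1(a)\in\A$) while $\delta_2(a)x$ stays a product in $\U$; cancelling $\theta(a)\tau_2(x)$ on both sides yields $\theta(\delta_1(a))x+\delta_2(a)x=0$, the first relation in part (a). The identity for $\tau_2(xa)$ yields $\theta(\delta_1(a))x+x\delta_2(a)=0$ by the same cancellation, giving the second relation in (a). In the product identity $\tau_2(xy)=x\tau_1(y)+\tau_1(x)y+x\tau_2(y)+\tau_2(x)y$, the middle factors of $x\tau_1(y)$ and $\tau_1(x)y$ lie in $\A$, so these become $\theta(\tau_1(y))x$ and $\theta(\tau_1(x))y$, producing exactly part (c). Running the substitutions in reverse gives $(ii)\Rightarrow(i)$.

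For the inner-ness claim I would begin from the criterion at the end of Theorem~\ref{asll}: $D$ is inner precisely when $\tau_1=0$, $\delta_1=id_{a_0}$, $\delta_2=id_{\A,x_0}$ and $\tau_2=id_{\U,x_0}+r_{a_0}$ for some $(a_0,x_0)\in\A\times\U$. The decisive simplification is that both auxiliary maps vanish for the Lau action: $id_{\A,x_0}(a)=ax_0-x_0a=\theta(a)x_0-\theta(a)x_0=0$ and $r_{a_0}(x)=xa_0-a_0x=\theta(a_0)x-\theta(a_0)x=0$. Hence $\delta_2=0$ and $\tau_2=id_{\U,x_0}$, and since $a_0$ and $x_0$ vary independently, the inner derivations are exactly those with $\tau_1=0$, $\delta_2=0$, $\delta_1=id_{a}$ and $\tau_2=id_{\U,x}$, as asserted. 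I anticipate no genuine obstacle; the sole point requiring vigilance is bookkeeping---deciding in each product whether the $\A$-factor acts through $\theta$ or whether the multiplication happens in $\U$.
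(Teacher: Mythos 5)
Your proposal is correct and matches the paper's route exactly: the paper derives Proposition~\ref{lau-der} by specializing Theorem~\ref{asll} to the Lau action $ax=xa=\theta(a)x$, and your substitutions (including the key observations that $id_{\A,x_0}=0$ and $r_{a_0}=0$ for this action, which collapse the inner-ness criterion) are precisely the omitted bookkeeping.
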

By the above proposition, for a derivation $D$ on $\A\ltimes_{\theta}\U$ we have 
\[\delta_2 (\A)\subseteq Z(\U), \quad \theta (a)\tau_1 (x) =a\tau_1(x)=\tau_1(x)a\]
and so $\tau_1(\U)\subseteq Z(\A)$. Also $x\tau_1 (y)+\tau_1 (x)y =0$ for all $x,y\in \U$ if and only if $\tau_1 (\U)\subseteq ker \theta$. Additionally, $\delta_1 (\A)\subseteq ann_{\A}\U =ker\theta$ if and only if $\delta_2 (\A)\subseteq ann_{\U}\U$.
\begin{rem} \label{cla}
If $\A$ is a commutative Banach algebra, then by Thomas' theorem \cite{tho}, for any derivation $\delta:\A\rightarrow\A$, $\delta(\A)\subseteq rad (\A)$. So in this case for any derivation $D$ on $\A\ltimes_{\theta}\U$ we always have $\delta_1 (\A)\subseteq rad (\A) \subseteq ker\theta=ann_{\A}\U $ and hence $\delta_2 (\A)\subseteq ann_{\U}\U$. Also $D=D_1+D_2+D_{3}$ where $D_1((a,x))=(\delta_1(a),0)$, $D_{2}((a,x))=(0,\delta_2(a))$ and $D_3((a,x))=(\tau_1(x),\tau_2(x))$ are derivations on $\A\ltimes_{\theta}\U$. In this case by Corollary \ref{tak}-$(i)$, for every derivation $\delta:\A\rightarrow\A$ the map $D$ on $\A\ltimes_{\theta}\U$ defined by $D((a,x))=(\delta(a),0)$ is a derivation.
\end{rem}
In the following we always assume that for any derivation $D\in \A\ltimes_{\theta}\U$ we have $\tau_1 =0$ and study the derivations under this condition. In this case, $\tau_2$ is then always a derivation on $\U$. 
\par 
The established results about the automatic continuity of derivations in section 3 obviously hold as well as in this special case $\theta$-Lau product. Now we are ready to state some results concerning the automatic continuity of derivations on $\A\ltimes_{\theta}\U$.
\par 
By the definition of $\theta$-Lau product and Corollary \ref{tak}-$(iv)$, any linear map $\tau:\U\rightarrow\U$ is a derivation if and only if the linear map $D((a,x))=(0,\tau(x))$ on $\A\ltimes_{\theta}\U$ is a derivation. According to this point, the following corollary is clear.
\begin{cor}
If every derivation on $\A\ltimes_{\theta}\U$ is continuous, then every derivation on $\U$ is continuous. In particular, if every derivation on $\A\ltimes_{\theta}\A$ is continuous, then every derivation on $\A$ is continuous.
\end{cor}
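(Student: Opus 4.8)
The plan is to realize every derivation on $\U$ as (the restriction of) a derivation on $\A\ltimes_{\theta}\U$ and then to transport the automatic continuity hypothesis back to $\U$. First I would recall that in the $\theta$-Lau product the module actions are $ax=xa=\theta(a)x$ for $a\in\A$ and $x\in\U$, so for any linear map $\tau:\U\rightarrow\U$ one has $\tau(ax)=\tau(\theta(a)x)=\theta(a)\tau(x)=a\tau(x)$ and likewise $\tau(xa)=\tau(x)a$. Thus every linear self-map of $\U$ is automatically an $\A$-bimodule homomorphism in this setting, and consequently, by Corollary \ref{tak}-$(iv)$, the map $D((a,x))=(0,\tau(x))$ is a derivation on $\A\ltimes_{\theta}\U$ precisely when $\tau$ is a derivation on $\U$. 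This is exactly the correspondence noted just before the statement.

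With this correspondence in hand, the argument is a routine continuity transfer. Assuming every derivation on $\A\ltimes_{\theta}\U$ is continuous, I would take an arbitrary derivation $\tau$ on $\U$, form the associated derivation $D((a,x))=(0,\tau(x))$, and note that $D$ is continuous by hypothesis. Since the norm on $\A\ltimes_{\theta}\U$ is $\|(a,x)\|=\|a\|+\|x\|$, a null sequence $x_{n}\rightarrow0$ in $\U$ gives $(0,x_{n})\rightarrow0$ in $\A\ltimes_{\theta}\U$, whence $D((0,x_{n}))=(0,\tau(x_{n}))\rightarrow0$ and therefore $\tau(x_{n})\rightarrow0$. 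Equivalently, identifying $\U$ with the closed ideal $\{0\}\times\U$ and composing $D$ with the (continuous) coordinate projection onto $\U$, one recovers $\tau$ as a composition of continuous maps. Either way, $\tau$ is continuous, and the ``in particular'' clause is then the special case $\U=\A$.

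I do not expect a genuine obstacle here: the statement is a direct corollary of the embedding of $Z^{1}(\U)$ into $Z^{1}(\A\ltimes_{\theta}\U)$. The only point that warrants care is the verification that the Lau module action forces every linear map $\tau:\U\rightarrow\U$ to be an $\A$-bimodule homomorphism, so that Corollary \ref{tak}-$(iv)$ applies with no extra hypothesis; once this is observed, continuity of $\tau$ follows immediately from continuity of $D$.
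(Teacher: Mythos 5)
Your argument is correct and is essentially the paper's own: the authors likewise note that since the Lau action $ax=\theta(a)x$ makes every linear self-map of $\U$ an $\A$-bimodule homomorphism, Corollary \ref{tak}-$(iv)$ gives the correspondence $\tau\mapsto D((a,x))=(0,\tau(x))$ between derivations on $\U$ and derivations of this form on $\A\ltimes_{\theta}\U$, from which the continuity transfer is immediate. No differences worth noting.
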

If $\A$ and $\U$ are semisimple where $\U$ has a bounded approximate identity, then by Proposition \ref{semi} every derivatin on $\A\ltimes_{\theta}\U$ is continuous. In particular if $\A$ is a semisimple Banach algebra with a bounded approximate identity, then all derivations on $\A\ltimes_{\theta}\A$ are continuous. In the case of $C^{*}$-algebras we can drop the semisimplicity of $\U$. In fact by Ringrose's result \cite{rin}, Proposition \ref{lau-der} and preceding corollary we have the next corollary.
\begin{cor}
Let $\A$ be a $C^{*}$-algebra. Then every derivation on $\A\ltimes_{\theta}\U$ is continuous if and only if every derivation on $\U$ is continuous.
\end{cor}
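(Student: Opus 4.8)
The plan is to prove the two implications separately; the forward direction is immediate and the reverse direction carries all the content. For the forward implication, if every derivation on $\A\ltimes_{\theta}\U$ is continuous, then every derivation on $\U$ is continuous by the preceding corollary, so nothing further is required there.

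For the reverse implication, I would assume that every derivation on $\U$ is continuous and let $D$ be an arbitrary derivation on $\A\ltimes_{\theta}\U$. Under the standing convention of this subsection we have $\tau_1=0$, so Proposition \ref{lau-der} gives
\[
D((a,x))=(\delta_1(a),\delta_2(a)+\tau_2(x))\quad\quad (a\in\A,\ x\in\U),
\]
where $\delta_1:\A\rightarrow\A$ and $\delta_2:\A\rightarrow\U$ are derivations and $\tau_2:\U\rightarrow\U$ is a derivation on $\U$. Recall from the beginning of Section 3 that $D$ is continuous if and only if each of the maps $\delta_1,\delta_2$ and $\tau_2$ is continuous (the remaining map $\tau_1$ being zero), so it suffices to verify these three continuities.

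I would then treat each component in turn. Since $\A$ is a $C^{*}$-algebra it is semisimple, so $\delta_1:\A\rightarrow\A$ is continuous by the Johnson--Sinclair theorem \cite{john1}. The map $\delta_2:\A\rightarrow\U$ is a derivation from the $C^{*}$-algebra $\A$ into the Banach $\A$-bimodule $\U$, hence continuous by Ringrose's theorem \cite{rin}. Finally $\tau_2$ is a derivation on $\U$ and is therefore continuous by hypothesis. As all three maps are continuous, so is $D$, which completes the reverse implication.

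The one point requiring care is that the general automatic-continuity results of Section 3, such as Proposition \ref{au1}, cannot be invoked directly: they require $ann_{\A}\U=(0)$ or $ann_{\U}\U=(0)$, whereas in the $\theta$-Lau product one has $ann_{\A}\U=\ker\theta\neq(0)$ and $ann_{\U}\U$ need not vanish. The way around this is to use the explicit componentwise structure from Proposition \ref{lau-der} together with Ringrose's theorem; it is precisely the latter, applied to the derivation $\delta_2:\A\rightarrow\U$ into the bimodule, that lets us dispense with any semisimplicity or bounded-approximate-identity hypothesis on $\U$ (in contrast to Corollary \ref{semi}, where both $\A$ and $\U$ were assumed semisimple).
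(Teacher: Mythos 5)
Your proof is correct and follows essentially the same route as the paper: the forward direction is the preceding corollary, and the reverse direction decomposes $D$ via Proposition \ref{lau-der} (with $\tau_1=0$ by the standing convention) and applies Ringrose's theorem to $\delta_2$, an automatic-continuity result for $\delta_1$ on the semisimple algebra $\A$, and the hypothesis to $\tau_2$. The only cosmetic difference is that you invoke Johnson--Sinclair for $\delta_1$ where the paper's one-line justification suggests Ringrose covers both $\delta_1$ and $\delta_2$; either works.
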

From Proposition \ref{taj1}-$(ii)$ and Remark \ref{cla} we have the following corollary.
\begin{cor}
Let $\A$ be a commutative Banach algebra and $\U$ a semisimple Banach algebra. Then every derivation $D$ on $ \A\ltimes_{\theta}\U $ is of the form $D=D_1+D_2$ where $D_1 ((a,x))=(0,\delta_2 (a)+\tau_2(x))$ is a continuous derivation on $\A\ltimes_{\theta}\U$ and $D_1 ((a,x))=(\delta_1 (a),0)$ is a derivation on $\A\ltimes_{\theta}\U$. In particular, in this case, if every derivation on $\A$ is continuous, then every derivation on $\A\ltimes_{\theta}\U$ is continuous.
\end{cor}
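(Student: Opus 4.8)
The plan is to recognize this corollary as a direct specialization of Proposition \ref{taj1}-$(ii)$ to the $\theta$-Lau product, so the whole argument reduces to checking that the four hypotheses of that proposition hold in the present setting. Recall that throughout this subsection we operate under the standing assumption that $\tau_1=0$ for every derivation $D$ on $\A\ltimes_{\theta}\U$, and that consequently $\tau_2$ is itself a derivation on $\U$. Thus $\tau_1=0$ is trivially continuous, and the remaining conditions to verify are $ann_{\U}\U=(0)$, $\delta_1(\A)\subseteq ann_{\A}\U$, and the continuity of $\tau_2$.

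First I would dispose of the two structural conditions fed by the hypotheses on $\A$ and $\U$. Since $\A$ is commutative, Remark \ref{cla} (via Thomas' theorem) supplies $\delta_1(\A)\subseteq rad(\A)\subseteq ker\theta=ann_{\A}\U$, which is precisely the required containment. Since $\U$ is semisimple, any element annihilating $\U$ on both sides lies in $rad(\U)=(0)$, so $ann_{\U}\U=(0)$. Finally, because $\tau_2$ is a derivation on the semisimple Banach algebra $\U$, the Johnson--Sinclair theorem \cite{john1} gives that $\tau_2$ is continuous. This last point is the only place where semisimplicity of $\U$ is genuinely used, and I regard it as the conceptual crux of the proof, even though its invocation is immediate; everything else is a bookkeeping check that the Section~3 machinery transports to $\A\ltimes_{\theta}\U$.

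With all hypotheses of Proposition \ref{taj1}-$(ii)$ now in hand, that proposition yields at once the decomposition $D=D_1+D_2$ in which $D_1((a,x))=(\tau_1(x),\delta_2(a)+\tau_2(x))$ is a continuous derivation and $D_2((a,x))=(\delta_1(a),0)$ is a derivation on $\A\ltimes_{\theta}\U$. Substituting $\tau_1=0$ collapses $D_1$ to the stated form $D_1((a,x))=(0,\delta_2(a)+\tau_2(x))$, which establishes the main assertion.

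For the concluding clause I would argue that, since $D_1$ is already continuous, $D$ is continuous exactly when $D_2$ is, and $D_2$ is continuous if and only if $\delta_1$ is. Hence, under the extra hypothesis that every derivation on $\A$ is continuous, the derivation $\delta_1:\A\rightarrow\A$ is continuous, forcing $D_2$, and therefore $D=D_1+D_2$, to be continuous. I anticipate no real obstacle: the argument is essentially a verification that the general automatic-continuity and decomposition results specialize correctly, with the automatic continuity of derivations on the semisimple algebra $\U$ being the sole substantive ingredient.
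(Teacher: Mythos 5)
Your proposal is correct and follows exactly the route the paper intends: the paper derives this corollary directly from Proposition \ref{taj1}-$(ii)$ together with Remark \ref{cla}, and your verification of the hypotheses ($ann_{\U}\U=(0)$ and continuity of $\tau_2$ from semisimplicity of $\U$ via Johnson--Sinclair, $\delta_1(\A)\subseteq rad(\A)\subseteq\ker\theta=ann_{\A}\U$ from commutativity of $\A$ via Thomas' theorem, and $\tau_1=0$ from the standing assumption of the subsection) is precisely the bookkeeping the paper leaves implicit. The concluding clause is handled the same way in both: continuity of $D$ reduces to continuity of $\delta_1$, hence of $D_2$.
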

Note that if $\A$ is commutative, then $\U$ is a commutative $\A$-bimodule and hence the set of all left $\A$-module homomorphisms, all right $\A$-module homomorphisms and all $\A$-module homomorphisms are the same. Now by Proposition \ref{taj2} and Remark \ref{cla} we obtain the next corollary.
\begin{cor}
Let $\A$ be a commutative Baanch algebra which has a bounded approximate identity. Then any derivation $D$ on $\A\ltimes_{\theta}\U$ is of the form $D=D_1+D_2$ where $D_1 ((a,x))=(\delta_1 (a),\tau_2(x))$ and $D_2 ((a,x))=(0,\delta_2(a))$ are derivations on $\A\ltimes_{\theta}\U$ and under any of the following conditions, $D_1$ is continuous.
\begin{enumerate}
\item[(i)]
There is a surjective $\A$-module homomorphism $\phi:\A\rightarrow\U$ and $\delta_1$ is continuous.
\item[(ii)]
There is an injective $\A$-module homomorphism $\phi:\A\rightarrow\U$ and $\tau_2$ is continuous.
\end{enumerate}
\end{cor}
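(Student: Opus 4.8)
The plan is to read this off from Proposition~\ref{taj2}, using Remark~\ref{cla} to supply the one structural hypothesis that proposition needs; consequently the whole argument is a matter of matching hypotheses rather than new computation. First I would recall the blanket convention of this subsection that $\tau_1=0$ for every derivation $D$ on $\A\ltimes_\theta\U$, so that $D((a,x))=(\delta_1(a),\delta_2(a)+\tau_2(x))$ and the asserted summands collapse to $D_1((a,x))=(\delta_1(a),\tau_2(x))$ and $D_2((a,x))=(0,\delta_2(a))$. Next, since $\A$ is commutative, Remark~\ref{cla} (via Thomas' theorem) gives $\delta_1(\A)\subseteq rad(\A)\subseteq ker\theta=ann_\A\U$, whence $\delta_2(\A)\subseteq ann_\U\U$. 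This is exactly the standing hypothesis $\delta_2(\A)\subseteq ann_\U\U$ of Proposition~\ref{taj2}, so that proposition immediately yields that $D_1$ and $D_2$ are derivations on $\A\ltimes_\theta\U$ with $D=D_1+D_2$, the $\tau_1$-term there vanishing by our convention.

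It then remains only to secure the continuity of $D_1$ under each of the two listed hypotheses. For (i) I would verify the hypotheses of Proposition~\ref{taj2}-(ii): a commutative Banach algebra with a bounded approximate identity has in particular a bounded right approximate identity; by the remark immediately preceding the corollary the surjective $\A$-module homomorphism $\phi$ is in particular a surjective \emph{left} $\A$-module homomorphism; $\delta_1$ is continuous by assumption; and $\tau_1=0$ is trivially continuous. Hence Proposition~\ref{taj2}-(ii) applies and $D_1$ is continuous. For (ii) I would instead appeal to Proposition~\ref{taj2}-(iii), checking the bounded right approximate identity exactly as before, noting that $\phi$ is now an injective left $\A$-module homomorphism, and observing that both $\tau_1=0$ and (by assumption) $\tau_2$ are continuous.

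The only point demanding any care, rather than a genuine obstacle, is the translation of notions across the two statements: one must use commutativity of $\A$ to identify $\A$-module homomorphisms with left $\A$-module homomorphisms (so that $\phi$ qualifies for Proposition~\ref{taj2}), and one must extract the needed one-sided bounded approximate identity from the two-sided one. With those identifications in place everything is a direct citation, so I expect no substantive difficulty.
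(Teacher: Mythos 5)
Your proposal is correct and follows exactly the route the paper intends: the paper derives this corollary by citing Remark \ref{cla} (Thomas' theorem giving $\delta_1(\A)\subseteq \mathrm{rad}(\A)\subseteq \ker\theta=ann_{\A}\U$, hence $\delta_2(\A)\subseteq ann_{\U}\U$) together with Proposition \ref{taj2}, parts (ii) and (iii), under the standing convention $\tau_1=0$. Your additional care about identifying $\A$-module homomorphisms with left $\A$-module homomorphisms in the commutative case, and extracting a one-sided bounded approximate identity, matches the paper's own remark immediately preceding the corollary.
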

In the continuation we assume that for any continuous derivation $D$ on $\A\ltimes_{\theta}\U$ we have $\tau_1 =0$. By the definition of $\A\ltimes_{\theta}\U$, in this case, $Hom_{\A}(\U)=\mathbb{B}(\U), N^{1}(\A,\U)=(0), Z^{1}(\A,\U)=H^{1}(\A,\U), R_{\A}(\U)=C_{\A}(\U)=(0)$ and $N^{1}(\U)=I(\U)$. 
\begin{rem}\label{pri}
For any derivation $\delta\in Z^{1}(\A)$, we have $\delta(\A)\subseteq ker \theta=ann_{\A}\U $, since Sinclair's theorem \cite{sinc} implies that $\delta(P)\subseteq P$ for any primitive ideal $P$ of $\A$. So in this case for any derivation $D\in Z^{1}(\A\ltimes_{\theta}\U)$ we always have $\delta_1 (\A)\subseteq ker \theta =ann_{\A}\U $ and hence $\delta_2 (\A)\subseteq ann_{\U}\U$. Also $D=D_1+D_2+D_{3}$ where $D_1((a,x))=(\delta_1(a),0)$, $D_{2}((a,x))=(0,\delta_2(a))$ and $D_3((a,x))=(\tau_1(x),\tau_2(x))$ are in $Z^{1}(\A\ltimes_{\theta}\U)$. 
\end{rem}
Note that it is not necessarily true that $\delta\in Z^{1}(\A,\U)$ implies $\delta(\A)\subseteq ann_{\U}\U$ as the next example shows this.
\begin{exm}
Assume that $G$ is a non-discrete abelian group. In \cite{br}, it has been shown that there is a nonzero continuous point derivation $d$ at a nonzero character $\theta$ on $M(G)$. Now consider $M(G)\ltimes_{\theta} \mathbb{C}$. Every derivation from $M(G)$ into $\mathbb{C}_\theta$ is a point derivation at $\theta$. It is clear that $ann_{\mathbb{C}}\mathbb{C} =(0)$. But $d\in Z^{1}(M(G),\mathbb{C}_\theta)$ is a nonzero derivation such that $d(M(G))\not\subseteq ann_{\mathbb{C}}\mathbb{C} =(0)$.
\end{exm}
\par 
Now we determine the first cohomology group of $\A\ltimes_{\theta}\U$ in some other cases.
\par 
Linear space $\mathcal{E}$ in Theorem \ref{11} in the case of $\A\ltimes_{\theta}\U$ is $\mathcal{E}=N^{1}(\A)\times N^{1}(\U)$. Therefore by theorem \ref{11} and Remark \ref{pri} we have the next proposition.
\begin{prop}\label{a1}
If $H^{1}(\A,\U)=(0)$, then 
\[H^{1}(\A\ltimes_{\theta}\U)\cong H^{1}(\A)\times H^{1}(\U).\]
\end{prop}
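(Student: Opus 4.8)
The plan is to obtain the result as a direct specialization of Theorem \ref{11} to the $\theta$-Lau setting, so the first task is to verify that both hypotheses of that theorem hold here. The assumption $H^{1}(\A,\U)=(0)$ is given outright. The remaining hypothesis, that $\delta(\A)\subseteq ann_{\A}\U$ for every $\delta\in Z^{1}(\A)$, is exactly the content of Remark \ref{pri}, where it is deduced from Sinclair's theorem together with the identity $ann_{\A}\U_{\theta}=\ker\theta$. With both hypotheses secured, Theorem \ref{11} gives
\[H^{1}(\A\ltimes_{\theta}\U)\cong \frac{Z^{1}(\A)\times [Hom_{\A}(\U)\cap Z^{1}(\U)]}{\mathcal{E}},\]
and it remains only to evaluate the numerator and the subspace $\mathcal{E}$ in the present situation.

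Next I would specialize each ingredient. Because the module action is $ax=xa=\theta(a)x$, every continuous linear map on $\U$ is automatically an $\A$-bimodule homomorphism, so $Hom_{\A}(\U)=\mathbb{B}(\U)$ and hence $Hom_{\A}(\U)\cap Z^{1}(\U)=Z^{1}(\U)$; the numerator is therefore $Z^{1}(\A)\times Z^{1}(\U)$. For $\mathcal{E}$ I would use the same action to note that $r_{a}(y)=ya-ay=0$ for every $a\in\A$ and that $id_{\A,x}(a)=ax-xa=0$ for every $x\in\U$. Thus the defining side condition $id_{\A,x}=0$ holds vacuously for all $x$, and the term $r_{a}$ drops out, so
\[\mathcal{E}=\{(id_{a},id_{\U,x})\, \mid \, a\in\A,\, x\in\U\}.\]
Since $a$ and $x$ now range freely and independently, and $\{id_{a}:a\in\A\}=N^{1}(\A)$ while $\{id_{\U,x}:x\in\U\}=N^{1}(\U)$, this is precisely $\mathcal{E}=N^{1}(\A)\times N^{1}(\U)$.

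Finally I would assemble the pieces and split the quotient:
\[H^{1}(\A\ltimes_{\theta}\U)\cong \frac{Z^{1}(\A)\times Z^{1}(\U)}{N^{1}(\A)\times N^{1}(\U)}\cong \frac{Z^{1}(\A)}{N^{1}(\A)}\times \frac{Z^{1}(\U)}{N^{1}(\U)}=H^{1}(\A)\times H^{1}(\U),\]
where the middle isomorphism is the elementary splitting of a quotient of a product of linear spaces by a product of subspaces. The one genuinely substantive step is the identification $\mathcal{E}=N^{1}(\A)\times N^{1}(\U)$: this is where the $\theta$-Lau structure does all the work, collapsing both the $r_{a}$ contribution and the coupling side condition $id_{\A,x}=0$ so that the two coordinates decouple. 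Everything else is bookkeeping, and I expect no real obstacle beyond making that decoupling explicit.
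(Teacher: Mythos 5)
Your proof is correct and follows exactly the route the paper intends: invoke Remark \ref{pri} (Sinclair's theorem) to secure the hypothesis $\delta(\A)\subseteq ann_{\A}\U=\ker\theta$, apply Theorem \ref{11}, and use the $\theta$-Lau action to get $Hom_{\A}(\U)\cap Z^{1}(\U)=Z^{1}(\U)$ and $\mathcal{E}=N^{1}(\A)\times N^{1}(\U)$. The paper states the identification of $\mathcal{E}$ without proof; your computation ($r_a=0$ and $id_{\A,x}=0$ identically, so the coordinates decouple) is precisely the justification it omits.
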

\begin{cor}
Let $\A$ be a Banach algebra with $H^{1}(\A,\A_{\theta})=(0)$ and $\overline{\A^{2}}=\A$. Then $H^{1}(\A \ltimes_{\theta}\A)=(0)$ if and only if $H^{1}(\A)=(0)$.
\end{cor}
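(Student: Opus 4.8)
The plan is to obtain this as an immediate specialization of Proposition~\ref{a1} to the case $\U=\A$. First I would observe that $\A\ltimes_{\theta}\A$ is precisely the $\theta$-Lau product in which the underlying Banach algebra $\U$ is taken to be $\A$ itself, so that the coefficient bimodule for the middle component is $\A_{\theta}$. Consequently the module $\U$ appearing in $H^{1}(\A,\U)$ is $\A_{\theta}$, and the hypothesis $H^{1}(\A,\A_{\theta})=(0)$ is exactly the hypothesis $H^{1}(\A,\U)=(0)$ demanded by Proposition~\ref{a1}.

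Before applying that proposition I would check the standing assumption of this subsection, namely that every continuous derivation $D$ on $\A\ltimes_{\theta}\A$ satisfies $\tau_{1}=0$. By Proposition~\ref{lau-der}, the component $\tau_{1}:\A\rightarrow\A$ is an $\A$-bimodule homomorphism with $\tau_{1}(xy)=0$ for all $x,y\in\U=\A$; that is, $\tau_{1}$ vanishes on $\A^{2}$. Since $D$ is continuous, $\tau_{1}$ is continuous, and because $\overline{\A^{2}}=\A$ by hypothesis, $\tau_{1}$ vanishes on a dense subspace and hence $\tau_{1}=0$. This is precisely the role played by the hypothesis $\overline{\A^{2}}=\A$: it places us in the regime in which Proposition~\ref{a1} is valid.

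With these two observations in hand, Proposition~\ref{a1} furnishes the linear-space isomorphism $H^{1}(\A\ltimes_{\theta}\A)\cong H^{1}(\A)\times H^{1}(\U)$. As $\U=\A$ as a Banach algebra, $H^{1}(\U)=H^{1}(\A)$, whence $H^{1}(\A\ltimes_{\theta}\A)\cong H^{1}(\A)\times H^{1}(\A)$. A finite direct product of vector spaces vanishes if and only if each factor vanishes, so $H^{1}(\A\ltimes_{\theta}\A)=(0)$ precisely when $H^{1}(\A)=(0)$, which is the stated equivalence. I do not anticipate any genuine obstacle: the substantive work---the cohomological splitting---has already been carried out in Proposition~\ref{a1}, and the only point requiring care is the density argument showing that $\overline{\A^{2}}=\A$ forces $\tau_{1}=0$, so that the standing hypothesis under which Proposition~\ref{a1} is stated is actually met.
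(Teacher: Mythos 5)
Your argument is correct and is essentially the paper's intended derivation: the corollary is stated as an immediate consequence of Proposition~\ref{a1} with $\U=\A_{\theta}$, where the hypothesis $\overline{\A^{2}}=\A$ serves exactly the role you identify, namely forcing $\tau_{1}=0$ for continuous derivations so that the standing assumption of the subsection (and hence Proposition~\ref{a1}) applies. The final step, $H^{1}(\A\ltimes_{\theta}\A)\cong H^{1}(\A)\times H^{1}(\A)$ vanishing iff $H^{1}(\A)$ does, matches the paper.
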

\par 
Linear space $\mathcal{F}$ in Theorem \ref{22} in the case of $\A\ltimes_{\theta}\U$ is $\mathcal{F}=(0)\times N^{1}(\U)$. So we have the following proposition.
\begin{prop}
Let for any $\delta\in Z^{1}(\A,\U)$, $\delta(\A)\subseteq ann_{\U}\U$. If $H^{1}(\A)=(0)$, then 
\[H^{1}(\A\ltimes_{\theta}\U)\cong H^{1}(\A,\U)\times H^{1}(\U).\]
\end{prop}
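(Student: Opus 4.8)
The plan is to read the result off Theorem \ref{22} after specializing every ingredient appearing there to the $\theta$-Lau situation. Throughout the present subsection we are assuming that $\tau_{1}=0$ for each continuous derivation on $\A\ltimes_{\theta}\U$, which is exactly the standing hypothesis under which the cohomology computations of Section 4 were carried out, so Theorem \ref{22} is applicable here. First I would verify its two hypotheses: the requirement that $\delta(\A)\subseteq ann_{\U}\U$ for every $\delta\in Z^{1}(\A,\U)$ is precisely the assumption of the present proposition, and $H^{1}(\A)=(0)$ is assumed as well. Hence Theorem \ref{22} yields
\[H^{1}(\A\ltimes_{\theta}\U)\cong \frac{Z^{1}(\A,\U)\times\bigl[Hom_{\A}(\U)\cap Z^{1}(\U)\bigr]}{\mathcal{F}},\qquad \mathcal{F}=\{(id_{\A,x},r_{a}+id_{\U,x})\mid x\in\U,\ id_{a}=0\}.\]

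The key step is to simplify all three pieces using the module action $ax=xa=\theta(a)x$. From this action one computes directly that $r_{a}(x)=xa-ax=\theta(a)x-\theta(a)x=0$ and $id_{\A,x}(a)=ax-xa=0$ for all $a\in\A$ and $x\in\U$; equivalently $R_{\A}(\U)=(0)$ and $N^{1}(\A,\U)=(0)$, both recorded before the proposition. Consequently every element of $\mathcal{F}$ reduces to $(id_{\A,x},r_{a}+id_{\U,x})=(0,id_{\U,x})$, which depends only on $x$; letting $x$ range over $\U$ gives $\mathcal{F}=(0)\times N^{1}(\U)$. Likewise $Hom_{\A}(\U)=\mathbb{B}(\U)$ gives $Hom_{\A}(\U)\cap Z^{1}(\U)=Z^{1}(\U)$, and $N^{1}(\A,\U)=(0)$ gives $Z^{1}(\A,\U)=H^{1}(\A,\U)$.

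Substituting these into the displayed isomorphism and invoking the elementary fact that $\frac{V\times W}{(0)\times W_{0}}\cong V\times\frac{W}{W_{0}}$ for a subspace $W_{0}\subseteq W$, I obtain
\[H^{1}(\A\ltimes_{\theta}\U)\cong Z^{1}(\A,\U)\times\frac{Z^{1}(\U)}{N^{1}(\U)}=H^{1}(\A,\U)\times H^{1}(\U),\]
which is the assertion. I do not expect any serious obstacle once Theorem \ref{22} is in hand; the only point requiring care is the collapse of $\mathcal{F}$ to $(0)\times N^{1}(\U)$, and this rests entirely on the two identities $r_{a}\equiv 0$ and $id_{\A,x}\equiv 0$ that are special to the $\theta$-Lau module action.
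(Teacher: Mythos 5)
Your proposal is correct and follows exactly the route the paper takes: the paper derives this proposition from Theorem \ref{22} by observing that for the $\theta$-Lau action $r_{a}=0$ and $id_{\A,x}=0$, so that $\mathcal{F}=(0)\times N^{1}(\U)$, $Hom_{\A}(\U)\cap Z^{1}(\U)=Z^{1}(\U)$ and $Z^{1}(\A,\U)=H^{1}(\A,\U)$. Your write-up merely spells out these specializations in more detail than the paper's one-line remark.
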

 The linear space $\mathcal{K}$ in Theorem \ref{33} in the case of $\A\ltimes_{\theta}\U$ is $\mathcal{K}=N^{1}(\A)\times (0)$. Therefore by Theorem \ref{33} we have the next proposition.
 \begin{prop}\label{prop8}
Let for every $\delta\in Z^{1}(\A,\U)$, $ \delta(\A)\subseteq ann _{\U}\U$.
 If $H^{1}(\U)=(0)$, then 
 \[H^{1}(\A\ltimes_{\theta}\U)\cong H^{1}(\A)\times H^{1}(\A,\U).\]
\end{prop}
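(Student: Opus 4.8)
The plan is to obtain the result as a direct specialization of Theorem \ref{33} to the $\theta$-Lau setting, so the work splits into verifying that the three hypotheses of that theorem hold here, applying it, and then simplifying the quotient it produces. First I would confirm the two annihilator conditions. The requirement $\delta_2(\A)\subseteq ann_{\U}\U$ for every $\delta_2\in Z^{1}(\A,\U)$ is the standing assumption of the proposition. The requirement $\delta_1(\A)\subseteq ann_{\A}\U$ for every $\delta_1\in Z^{1}(\A)$ is exactly Remark \ref{pri}: since $ann_{\A}\U_{\theta}=\ker\theta$ and any derivation of $\A$ carries each primitive ideal into itself, one gets $\delta_1(\A)\subseteq\ker\theta=ann_{\A}\U$.

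Next I would check the quotient condition $\frac{Hom_{\A}(\U)\cap Z^{1}(\U)}{R_{\A}(\U)+N^{1}(\U)}=(0)$. Because $H^{1}(\U)=(0)$ we have $Z^{1}(\U)=N^{1}(\U)$, and therefore $Hom_{\A}(\U)\cap Z^{1}(\U)\subseteq Z^{1}(\U)=N^{1}(\U)\subseteq R_{\A}(\U)+N^{1}(\U)$; hence the numerator already lies inside the denominator and the quotient collapses to $(0)$. This is precisely the observation recorded just after Corollary \ref{333}. With these three facts in hand, Theorem \ref{33} applies and yields $H^{1}(\A\ltimes_{\theta}\U)\cong \frac{Z^{1}(\A)\times Z^{1}(\A,\U)}{\mathcal{K}}$.

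It then remains to identify $\mathcal{K}$. In the $\theta$-Lau product the actions are $ax=xa=\theta(a)x$, so $r_{a}(y)=ya-ay=0$ and $id_{\A,x}(a)=ax-xa=0$ for all $a\in\A$, $x\in\U$; equivalently $R_{\A}(\U)=(0)$ and $N^{1}(\A,\U)=(0)$. Consequently the defining relation $r_{a}+id_{\U,x}=0$ of $\mathcal{K}$ reduces to $id_{\U,x}=0$, while its second coordinate $id_{\A,x}$ is automatically zero; taking $x=0$ shows $(id_{a},0)\in\mathcal{K}$ for every $a\in\A$ and no other elements arise, so $\mathcal{K}=N^{1}(\A)\times(0)$.

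Finally I would use the splitting of a quotient by a product subspace, $\frac{Z^{1}(\A)\times Z^{1}(\A,\U)}{N^{1}(\A)\times(0)}\cong \frac{Z^{1}(\A)}{N^{1}(\A)}\times Z^{1}(\A,\U)=H^{1}(\A)\times Z^{1}(\A,\U)$, and since $N^{1}(\A,\U)=(0)$ forces $Z^{1}(\A,\U)=H^{1}(\A,\U)$, this is exactly $H^{1}(\A)\times H^{1}(\A,\U)$. The only genuinely substantive inputs are the two structural observations (Remark \ref{pri} and the consequence of $H^{1}(\U)=(0)$); everything past the invocation of Theorem \ref{33} is routine. Thus the main obstacle is simply confirming that the $\theta$-Lau hypotheses match those of Theorem \ref{33} and that $\mathcal{K}$ degenerates to $N^{1}(\A)\times(0)$.
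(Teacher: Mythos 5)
Your proposal is correct and follows exactly the route the paper takes: it records (just before the proposition) that in the $\theta$-Lau case $\mathcal{K}=N^{1}(\A)\times(0)$ and cites Theorem \ref{33}, with the hypotheses $\delta_1(\A)\subseteq ann_{\A}\U$ coming from Remark \ref{pri} and the quotient condition from the observation after Corollary \ref{333} that $H^{1}(\U)=(0)$ forces it. Your write-up just makes explicit the verifications the paper leaves implicit, including $R_{\A}(\U)=(0)$, $N^{1}(\A,\U)=(0)$ and hence $Z^{1}(\A,\U)=H^{1}(\A,\U)$.
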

The following proposition follows directly from Theorem \ref{44} and the properties of $\A\ltimes_{\theta}\U$.
\begin{prop}\label{prop10}
Suppose that $H^{1}(\A)=(0)$ and $H^{1}(\A,\U)=(0)$. Then
\[H^{1}(\A\ltimes_{\theta}\U)\cong H^{1}(\U).\] 
\end{prop}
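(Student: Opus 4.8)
The plan is to apply Theorem \ref{44} verbatim, since the hypotheses $H^{1}(\A)=(0)$ and $H^{1}(\A,\U)=(0)$ are precisely the ones it requires. That theorem yields
\[H^{1}(\A\ltimes_{\theta}\U)\cong \frac{Hom_{\A}(\U)\cap Z^{1}(\U)}{C_{\A}(\U)+I(\U)},\]
so the entire task reduces to identifying the three subspaces $Hom_{\A}(\U)$, $C_{\A}(\U)$ and $I(\U)$ under the special module action $ax=xa=\theta(a)x$ of Example \ref{la}.

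First I would record that $Hom_{\A}(\U)=\mathbb{B}(\U)$: for any $\phi\in\mathbb{B}(\U)$ and $a\in\A$, $x\in\U$ we have $\phi(ax)=\phi(\theta(a)x)=\theta(a)\phi(x)=a\phi(x)$, and similarly $\phi(xa)=\phi(x)a$, so every bounded linear map is automatically an $\A$-bimodule homomorphism. Consequently $Hom_{\A}(\U)\cap Z^{1}(\U)=Z^{1}(\U)$, since $Z^{1}(\U)\subseteq\mathbb{B}(\U)$.

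Next I would compute the denominator. For the map $r_a$ one has $r_a(x)=xa-ax=\theta(a)x-\theta(a)x=0$ for all $x\in\U$, whence $R_{\A}(\U)=(0)$ and in particular $C_{\A}(\U)=(0)$. For $I(\U)$, note that $id_{\A,x}(a)=ax-xa=\theta(a)x-\theta(a)x=0$ for every $x\in\U$, so the defining constraint $id_{\A,x}=0$ holds vacuously and $I(\U)=\{id_{\U,x}\mid x\in\U\}=N^{1}(\U)$. These are exactly the degeneracies already recorded just before the proposition for $\A\ltimes_{\theta}\U$.

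Substituting these identifications into the displayed isomorphism gives
\[H^{1}(\A\ltimes_{\theta}\U)\cong \frac{Z^{1}(\U)}{(0)+N^{1}(\U)}=\frac{Z^{1}(\U)}{N^{1}(\U)}=H^{1}(\U),\]
which is the claim. I do not expect a genuine obstacle here; the one point worth double-checking is that passing the module structure through the character $\theta$ forces both $C_{\A}(\U)=(0)$ and $I(\U)=N^{1}(\U)$, and it is precisely this collapse of the quotient in Theorem \ref{44} that produces the clean isomorphism with $H^{1}(\U)$.
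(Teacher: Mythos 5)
Your proposal is correct and is essentially identical to the paper's own argument: the paper also derives the proposition directly from Theorem \ref{44}, having already recorded that for $\A\ltimes_{\theta}\U$ one has $Hom_{\A}(\U)=\mathbb{B}(\U)$, $R_{\A}(\U)=C_{\A}(\U)=(0)$ and $I(\U)=N^{1}(\U)$, which collapses the quotient to $Z^{1}(\U)/N^{1}(\U)=H^{1}(\U)$ exactly as you compute.
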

\begin{rem}\label{akh}
From Proposition \ref{prop10} it follows that if $H^{1}(\A)=(0), H^{1}(\A,\U)=(0)$ and $H^{1}(\U)=(0)$, then $H^{1}(\A\ltimes_{\theta}\U)=(0)$. Under some conditions the converse of is also true. That is, if for any $\delta\in Z^{1}(\A,\U)$ we have $\delta (\A)\subseteq ann_{\U}\U$ and $H^{1}(\A\ltimes_{\theta}\U)=(0)$, then $H^{1}(\A)=(0), H^{1}(\A,\U)=(0)$ and $H^{1}(\U)=(0)$. It does not yield from above propositions. We prove it directly as follows.  By the hypotheses, for any $\delta_1\in Z^{1}(\A)$, $\delta_2\in Z^{1}(\A,\U)$ and $\tau\in Z^{1}(\U)$ the map $D:\A\ltimes_{\theta}\U\rightarrow \A\ltimes_{\theta}\U$ given by $D((a,x))=(\delta_1(a),0)$, $D((a,x))=(0,\delta_2(a))$ or $D((a,x))=(0,\tau_2(x))$ is a continuous derivation. If $H^{1}(\A\ltimes_{\theta}\U)=(0)$, then $\delta_2=0$ and $\delta_1, \tau_2$ are inner.
\end{rem}
 \begin{exm}
Let $\A$ be a weakly amenable commutative Banach algebra and $\overline{\U^{2}}=\U$. So $Z^{1}(\A)=(0)$, $Z^{1}(\A,\U)=(0)$ and from Remark \ref{akh} we have $H^{1}(\A\ltimes_{\theta}\U)=(0)$ if and only if $H^{1}(\U)=(0)$.
 \end{exm}
 In particular if $\A$ is a weakly amenable commutative Banach algebra, above example implies that $H^{1}(\A\ltimes_{\theta}\A)=(0)$. Note that for a weakly amenable Banach algebra $\A$ the equality $\overline{\A^{2}}=\A$ always holds.

\bibliographystyle{plain}
\bibliography{paper}

\end{document}